\newtheorem{theorem}{Theorem}[section]
\theoremstyle{definition}
\newtheorem{definition}[theorem]{Definition}
\newtheorem{example}[theorem]{Example}
\newtheorem{proposition}[theorem]{Proposition}
\newtheorem{corollary}[theorem]{Corollary}
\theoremstyle{remark}
\newtheorem{remark}[theorem]{Remark}
\numberwithin{equation}{section}
\begin{document}

\title{ $Endo$-prime hyperideals }

\author{Mahdi Anbarloei}
\address{Department of Mathematics, Faculty of Sciences,
Imam Khomeini International University, Qazvin, Iran.
}

\email{m.anbarloei@sci.ikiu.ac.ir}


\subjclass[2020]{  20N20, 16Y20  }


\keywords{  Endomorphism,   $Endo$-prime hyperideal,  $Endo$-primary hyperideal}

\begin{abstract}
In this paper, we aim to introduce and study the notion of   $Endo$-prime hyperideals.

\end{abstract}
\maketitle

\section{Introduction}
The notion of prime ideals  in the theory of rings as an extention
of the notion of prime number in the ring of integers has an important position in that theory, as might be expected from the central place occupied by the primes in arithmetic. For this reason, many papers were written abut prime ideals and thier generalizations.
Looking at the respective studies, the concept of $Endo$-prime ideals of  a commutative ring as an expansion of prime ideals has been introduced and studied in \cite{Akray} and \cite{Najjar }. Let $R$ be a commutative ring and $\theta: R \longrightarrow R$ be a fixed endomorphism. An ideal $I$ of $R$ refers to a $\theta$-prime ideal  if, whenever $u,v \in R$ and $uv \in I$, then $u \in I$ or $\theta(v) \in I$.  Moreover, a generalization of this notion called $\theta$-primary ideals was proposed in \cite{Mahdou}.

A hyperstructure  is an  algebraic structure  that has at least one multi-valued operation, known as a hyperoperation. It is well-known that hyperstructure theory was proposed  by a French mathematician in 1934 \cite{s1}. He defined the notion of hypergroups as an expansion of groups. This has been the starting point for the study of many types of hyperrings such as general hyperrings, Krasner hyperrings and multiplicative hyperrings. The Krasner hyperrings introduced by Krasner are obtained by considering the addition as a hyperoperation and the multiplication as a binary operation \cite{krasner}. Krasner $(m,n)$-hyperrings as an expansion of Krasner hyperrings were presented in \cite{d1}. More details about these hyperrings and fundamental relations on them were given in \cite{mah6,asadi, car, ma,  nour,  rev1}.  The important hyperideals of a Krasner $(m,n)$-hyperring, namely, $n$-ary prime and $n$-ary primary hyperideals were studies   in \cite{sorc1}. The generalizations of these notions such as $(k,n)$-absorbing, $(k,n)$-absorbing primary hyperideals, 
weakly   $(k,n)$-absorbing and weakly $(k,n)$-absorbing primary hyperideals were investigated in \cite{www} and \cite{rev2}.

In this paper, we aim to   analyze a class 
of hyperideals established on an endomorphism in a Krasner $(m,n)$-hyperring. The paper is organized as follows. In
Section 2, we start by recalling some background material. In
Section 3, the notion of $n$-ary $Endo$-prime hyperideals associated with an endomorphism $\theta$ is introduced and some of their properties are given. For example, we show that every $n$-ary prime hyperideal of a Krasner $(m,n)$-hyperring is an $n$-ary $Endo$-prime hyperideal but the converse need not to be hold in Example \ref{salami}. In Theorem \ref{1} it is shown that the radical of an $n$-ary $Endo$-prime hyperideal of $H$ is an $n$-ary $Endo$-prime hyperideal of $H$. We show that if  $E$ is an $n$-ary $Endo$-prime hyperideal  associated with $\theta$, then $E$ contains $\theta(E)$ in Proposition \ref{2}. But  Example \ref{pelarak} verifies that the converse of of the expression may not be true. Moreover, Theorem \ref{madar} describes the interaction of hyperideals with an $n$-ary $Endo$-prime hyperideal and provides conditions under which a hyperideal becomes an $n$-ary Endo-prime hyperideal.
In Theorem \ref{inter}, we investigate whether the intersection of the
collection of $n$-ary $Endo$-prime hyperideals preserves the algebraic structure. In Theorem \ref{5}, we obtain that  the intersection of all $n$-ary $Endo$-prime hyperideals associated with $\theta$ in a Krasner $(m,n)$-hyperring is equal to the set of all $\theta$-nilpotent elements of the Krasner $(m,n)$-hyperring where $\theta$ is an endomorphism. Besides, we examine the transfer of this new property of hyperideals to various ring-theoretic constructions. In Section 4, we devote our attention to the  study of a generalization of $n$-ary $Endo$-prime hyperideals in a Krasner $(m,n)$-hyperring called $n$-ary $Endo$-primary  hyperideals. 

\section{Preliminaries}
In this section, we peresent deﬁnitions and notations  used in this paper.

An $n$-ary hyperoperation $``h"$ on   set $H \neq \varnothing$ is a mapping of $H^n$ into the family of all non-empty subsets of $H$. If $``h"$ is an $n$-ary hyperoperation on $H$, then $(H, h)$ is called an $n$-ary hypergroupoid. We can expand the $n$-ary hyperoperation on $H$ to non-empty subsets of $H$ as follows. Let  $H_1,\ldots, H_n$ be subsets of $H$, then
\[h(H^n_1) = h(H_1,\ldots, H_n) = \bigcup \{h(u_1,\ldots,u_n) \ \vert \ u_i \in H_i, 1 \leq i \leq n \}.\]

Let us use the notation $u^j_i$ instead of   the sequence $u_i, u_{i+1},\ldots, u_j$. Then we get $h(u_1,\ldots, u_i, v_{i+1},\ldots, v_j, w_{j+1},\ldots, w_n)= h(u^i_1, v^j_{i+1},w^n_{j+1})$ and  $h(u_1,\ldots, u_i, \underbrace{v,\ldots,v}_{j-i},\break  w_{j+1},\ldots, w_n)=h(u^i_1, v^{(j-i)}, w^n_{j+1})$. This  notation is the  empty symbol when $j< i$. Let $h$ be an $n$-ary hyperoperation.  Then $r$-ary hyperoperation $h_{(l)}$ for $r = l(n- 1) + 1$ is given by $h_{(l)}(u_1^r) = \underbrace{h(h(\ldots, h(h}_l(u^n _1), u_{n+1}^{2n -1}),\ldots), u_{r-n+1}^{r})$. An $n$-ary hypergroupoid $(H, h)$ is commutative if $h(u_1^n) = h(u_{\sigma(1)}^{\sigma(n)})$ for all $u_1^n \in H $ and $ \sigma \in \mathbb{S}_n$.
An $n$-ary semihypergroup is an $n$-ary hypergroupoid $(H, h)$, which is associative, that is $h(u^{i-1}_1, h(u_i^{n+i-1}), u^{2n-1}_{n+i}) = h(u^{j-1}_1, h(u_j^{n+j-1}), u_{n+j}^{2n-1})$  for  $1 \leq i < j \leq n$ and  $u_1^{2n-1} \in H$. If the equation $x \in h(u_1^{i-1}, x_i, u_{ i+1}^n)$ in an $n$-ary hypergroupoid $(H, h)$ has a solution $x_i \in H$
for all  $u_1^{i-1}, u_{ i+1}^n,x \in H$ and $i \in \{1,\ldots,n\}$, then $(H,h)$ is called an $n$-ary quasihypergroup. An  $n$-ary hypergroup is an $n$-ary semihypergroup that is an $n$-ary quasihypergroup. A non-empty subset $G$ of an $n$-ary hypergroup $H$ is called
an $n$-ary subhypergroup of $H$ if $(G,h)$ is an $n$-ary hypergroup \cite{d1}.

\begin{definition}
\cite{d1} A triple $(H, h, k)$, or simply $H$, refers to  a Krasner $(m, n)$-hyperring  if it satisfies the following axioms:
\begin{itemize} 
\item[\rm{(a)}]~ $(H, h$) is a canonical $m$-ary hypergroup, that is 
\begin{itemize} 
\item[\rm{(1)}]~there exists a unique $e \in H$ with  $h(u, e^{(m-1)}) = \{u\}$  for each $u \in H$ ;
\item[\rm{(2)}]~for all $u \in H$ there exists a unique $u^{-1} \in H$ with  $e \in h(u, u^{-1}, e^{(m-2)})$;
\item[\rm{(3)}]~if $u \in h(u^m _1)$, then  $u_i \in h(u, u^{-1},\ldots, u^{-1}_{ i-1}, u^{-1}_ {i+1},\ldots, u^{-1}_ m)$ for each $i\in \{1,\ldots,m\}$.
\end{itemize} 
\item[\rm{(b)}]~ $(H, k)$ is a $n$-ary semigroup;
\item[\rm{(c)}]~
$k(u^{i-1}_1, h(v^m _1 ), u^n _{i+1}) = h(k(u^{i-1}_1, v_1, a^n_{ i+1}),\ldots, k(u^{i-1}_1, v_m, u^n_{ i+1}))$ for every $u^{i-1}_1 , u^n_{ i+1}, v^m_ 1 \in H$, and $i \in \{1,\ldots,n\}$;
\item[\rm{(d)}]~  $k(0, u^n _2) = k(u_2, 0, u^n _3) = \cdots =k(u^n_ 2, 0) = 0$ for all $u^n_ 2 \in H$.
\end{itemize} 
\end{definition}
We mention here that, throughout this paper   $H$ is a commutative Krasner $(m,n)$-hyperring with scalar identity $1_H$, that is $k(u,1_H^{(n-1)})=u$ for all $u \in H$. 

If $(G, h, k)$ is a Krasner $(m, n)$-hyperring such that $\varnothing \neq  G \subseteq H$, then  $G$ is called a subhyperring of $H$. If $\varnothing \neq I \subseteq H$ such that $(I, h)$ is an $m$-ary subhypergroup
of $(H, h)$ and $k(u^{i-1}_1, I, u_{i+1}^n) \subseteq I$  for  $u^n _1 \in H$, $i \in \{1,\ldots,n\}$, then $I$ is said to be  a hyperideal of $H$.
For any hyperideal $I$   of $H$,   the set $H/I=\{h(u_1^{i-1},I,u_{i+1}^n) \ \vert \ u_1^{i-1},u_{i+1}^n \in H\}$ is a Krasner $(m, n)$-hyperring with $m$-ary hyperoperation and $n$-hyperoperation $h$ and $k$, respectively \cite{sorc1}.
\begin{definition}
\cite{sorc1} A proper hyperideal $Q$ of $H$ refers to an  $n$-ary prime hyperideal if $k(Q_1^ n) \subseteq Q$ for hyperideals $Q_1^n$ in $H$ implies that $Q_i \subseteq Q$ for some $i \in \{1,\cdots,n\}$. 
\end{definition}
Lemma 4.5 in \cite{sorc1} verifies that $Q$ is an $n$-ary prime hyperideal of $H$ if  $k(u^n_ 1) \in Q$ for all $u^n_ 1 \in H$ implies $u_i \in Q$ for some $i \in \{1,\cdots,n\}$.

  The radical of a hyperideal $I$ of $H$ is 
the intersection of all $n$-ary prime hyperideals of $H$ containing $I$ and it is   denoted by $rad(I)$. We consider $rad(I)=H$ if there is no  $n$-ary prime hyperideal  containing $I$. By Theorem 4.23 in \cite{sorc1} we obtain
\[rad(I)= \biggm{\{} u \in H \ \vert \ \biggm{\{} 
\begin{array}{lr}
k(u^{(r)},1_H^{(n-r)}) \in I,& r \leq n\\
k_{(l)}(u^{(r)}) \in I, & r>n, \ r=l(n-1)+1
\end{array}
\biggm{\}} \biggm{\}}.\]
\begin{definition}
A proper hyperideal $P$ of $H$ refers to   an  $n$-ary primary hyperideal if $k(u^n _1) \in P$ implies that $u_i \in P$ or $k(u_1^{i-1}, 1_H, u_{ i+1}^n) \in rad(P)$ for some $i \in \{1,\cdots,n\}$. 
\end{definition}

\begin{definition} \cite{sorc1} For any  $u \in H$,  $\langle u \rangle$ denotes the hyperideal generated by $u$  and defined by $\langle u \rangle=k(u,H,1_H^{(n-2)})=\{k(u,v,1_H^{(n-2)}) \ \vert \ v \in H\}$. Moreover, for any hyperideal $I$ of $H$ we define $(I:u)=\{v \in H \ \vert \ k(u,v,1_H^{(n-2)}) \in I\}$.
\end{definition}
\begin{definition} \cite{sorc1} A hyperideal $M$ of $H$ refers to a  maximal hyperideal if  $M \subseteq I \subseteq H$ for every hyperideal $I$ of $H$ implies that $I=M$ or $I=H$.
\end{definition}
$Max(H)$ denotes the set of all maximal hyperideal of $H$.  Moreover, if $H$ has just one maximal hyperideal, then $G$ is called local.
\begin{definition} \cite{sorc1} 
 An element $u \in H$ is  invertible if there exists $v \in H$ with $1_H=k(u,v,1_H^{(n-2)})$. 
\end{definition}






\begin{definition} \cite{d1}
Let $(H_1, h_1, k_1)$ and $(H_2, h_2, k_2)$ be two Krasner $(m, n)$-hyperrings. A mapping
$\eta : H_1 \longrightarrow H_2$ is called a homomorphism if for all $u^m _1,  v^n_ 1 \in H_1$ we have
\begin{itemize} 
\item[\rm{(i)}]~$\eta(h_1(u_1,\ldots, u_m)) = h_2(\eta(u_1),\ldots,\eta(u_m)),$
\item[\rm{(ii)}]~$\eta(k_1(v_1,\ldots, v_n)) = k_2(\eta(v_1),\ldots,\eta(v_n)), $
\item[\rm{(iii)}]~$\eta(1_{H_1})=1_{H_2}.$
\end{itemize}
\end{definition}

\section{  $n$-ary Endo-prime hyperideals }
In this section, we treat to the introducing $Endo$-prime hyperideal associated with $\theta$ on a Krasner $(m,n)$-hyperring $H$  where $\theta : H \longrightarrow H$   is an   endomorphism and investigate many results with respect to such hyperideals.
\begin{definition} 
Let $E$ be a proper hyperideal of $H$ and $\theta : H \longrightarrow H$ be an   endomorphism. We say that $E$ is an $n$-ary $Endo$-prime hyperideal associated with $\theta$, if for all $u_1^n \in H$, $k(u_1^n) \in E$ implies that $u_i \in E$ or $\theta\big(k(u_ 1^{i-1},1_H,u_{i+1}^n)\big) \in E$ for some $i \in \{1,\ldots,n\}$. 
\end{definition}
\begin{example}
 Let G=\{0,1,u,v\} and $\mathcal{P}^*(G)$ be the family of all non-empty subsets of $H$. Consider the hyperintegral domain $(G,\oplus,\circ)$ where $\oplus: G \times G \longrightarrow  \mathcal{P}^*(G)$ is the multi-valued function defined by

\[ \begin{tabular}{|c|c|c|c|c| } 
\hline   $\oplus$  & $0$ & $1$ & $u$ & $v$  
\\ \hline $0$ & $\{0\}$ & $\{1\}$ & $\{u\}$ & $\{v\}$  
\\ \hline $1$ & $\{1\}$ & $\{0,u,v\}$ & $\{1,u\}$ & $\{1,v\}$   
\\ \hline$u$ & $\{u\}$ & $\{1,u\}$ & $\{0,1,v\}$ & $\{u,v\}$  
\\ \hline $v$ & $\{v\}$ & $\{1,v\}$ & $\{u,v\}$ & $\{0,1,u\}$  
\\ \hline
\end{tabular}\]

and the multiplication $\circ$ defined by 

\[ \begin{tabular}{|c|c|c|c|c|c|c|} 
\hline $\circ$ & $0$ & $1$ & $u$ & $v$  
\\ \hline $0$ & $0$ & $0$ & $0$ & $0$  
\\ \hline $1$ & $0$ & $1$ & $u$ & $v$  
\\ \hline$u$ & $0$ & $u$ & $v$ & $1$  
\\ \hline $v$ & $0$ & $v$ & $1$ & $u$  
\\ \hline
\end{tabular}\]

Put $H=R \times G$ where $R$ is a Krasner $(2,2)$-hyperring. Then $E=R \times 0$ is an $2$-ary $Endo$-prime hyperideal associated with $\theta_R \times \theta_G$ such that $\theta_R: R \longrightarrow R$ is an endomorphism and $\theta_G: G \longrightarrow G$ is the inclusion homomorphism.
\end{example}
\begin{remark} \label{haji}
Let $H$ be a Krasner $(m,n)$-hyperring and $\theta : H \longrightarrow H$ be an endomorphism. Every $n$-ary prime hyperideal of $H$ is $n$-ary $Endo$-prime hyperideal associated with $\theta$.
\end{remark}
The converse of Remark \ref{haji}  may not be always true as it is shown in the following example.
\begin{example} \label{salami}
Consider the set  $H=\{0,1,2,3\}$ with the $2$-ary hyperopertion $\oplus$ defined as 

\[\begin{tabular}{|c|c|c|c|c|} 
\hline   $\oplus$  & $0$ & $1$ & $2$ & $3$ 
\\ \hline $0$ & $\{0\}$ & $\{1\}$ & $\{2\}$ & $\{3\}$  
\\ \hline $1$ & $\{1\}$ & $\{0,1\}$ & $\{3\}$ & $\{2,3\}$ 
\\ \hline$2$ & $\{2\}$ & $\{3\}$ & $\{0\}$ & $\{1\}$  
\\ \hline $3$ & $\{3\}$ & $\{2,3\}$ & $\{1\}$ & $\{0,1\}$  
\\ \hline
\end{tabular}\]

and the 4-ary operation $k$ defined as $k(u_1^4)=2$ if $u_1^4 \in \{2,3\}$ or $0$ if otherwise. By Example 4.8 in \cite{sorc1}, $H$ is a Krasner $(2,4)$-hyperideal. Assume that $\theta : H \longrightarrow H$ is an endomorphism. 
Then, $0$ is an $4$-ary $Endo$-prime hyperideal associated with $\theta$. However,  $0$ is not an $4$-ary prime hyperideal of $H$ since $k(1,2,3,3)=0$ but $1,2,3 \notin 0$.

\end{example}
 Our first theorem of this section establishes that the radical of an $n$-ary $Endo$-prime hyperideal associated with $\theta$ is an $n$-ary $Endo$-prime hyperideal associated with $\theta$.
\begin{theorem} \label{1}
Let $E$ be a proper hyperideal of $H$ and $\theta : H \longrightarrow H$ be an   endomorphism. If $E$ is   an $n$-ary $Endo$-prime hyperideal associated with $\theta$, then $rad(E)$ is an $n$-ary $Endo$-prime hyperideal associated with $\theta$.
\end{theorem}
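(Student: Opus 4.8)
The plan is to reduce everything to the power-characterization of the radical recorded after the definition of $rad$ (Theorem 4.23 in \cite{sorc1}): an element $w$ lies in $rad(E)$ exactly when some $n$-ary power of $w$ lies in $E$, where by the $t$-th power of $w$ I mean $k(w^{(r)},1_H^{(n-r)})$ when $r\le n$ and $k_{(l)}(w^{(r)})$ when $r>n$, $r=l(n-1)+1$. Since $(H,k)$ is a commutative $n$-ary semigroup with scalar identity $1_H$, these powers behave like ordinary powers in a commutative monoid; I will write $w^{[t]}$ for the $t$-th power. Two identities drive the argument: $\big(k(a_1^n)\big)^{[t]}=k(a_1^{[t]},\ldots,a_n^{[t]})$, coming from commutativity and associativity of $k$, and $\theta(a^{[t]})=\big(\theta(a)\big)^{[t]}$, coming from the fact that $\theta$ preserves $k$ and $1_H$.

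Before the main point I would verify that $rad(E)$ is proper. Since $E$ is proper, $1_H\notin E$ (otherwise the absorption property of a hyperideal together with $k(u,1_H^{(n-1)})=u$ would force every $u\in E$, i.e.\ $E=H$); because every power of $1_H$ equals $1_H$, the characterization gives $1_H\notin rad(E)$, hence $rad(E)\neq H$.

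For the core of the proof, suppose $k(u_1^n)\in rad(E)$. By the power-characterization there is $t\geq 1$ with $\big(k(u_1^n)\big)^{[t]}\in E$, and by the first identity this rewrites as $k(u_1^{[t]},\ldots,u_n^{[t]})\in E$, which is of the form $k(v_1^n)\in E$ with $v_j:=u_j^{[t]}$. Applying the hypothesis that $E$ is $n$-ary $Endo$-prime associated with $\theta$ to this membership yields an index $i$ with $v_i\in E$ or $\theta\big(k(v_1^{i-1},1_H,v_{i+1}^n)\big)\in E$. In the first case $u_i^{[t]}=v_i\in E$, so $u_i\in rad(E)$ immediately. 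In the second case put $c:=k(u_1^{i-1},1_H,u_{i+1}^n)$; since $1_H^{[t]}=1_H$, commutativity gives $k(v_1^{i-1},1_H,v_{i+1}^n)=c^{[t]}$, so $\theta(c^{[t]})\in E$. Using the second identity, $\big(\theta(c)\big)^{[t]}=\theta(c^{[t]})\in E$, and the characterization gives $\theta(c)=\theta\big(k(u_1^{i-1},1_H,u_{i+1}^n)\big)\in rad(E)$. In either case the defining condition for $rad(E)$ is met, so $rad(E)$ is $n$-ary $Endo$-prime associated with $\theta$.

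I expect the only delicate point to be the bookkeeping behind the two displayed identities: one must match the $k_{(l)}$ arities correctly across the two regimes $r\le n$ and $r>n$ appearing in the radical formula, and ensure that a single exponent $t$ serves simultaneously for the product and for each factor $u_j^{[t]}$. Conceptually this is nothing more than the statement that in the commutative monoid $(H,k,1_H)$ powers distribute over products and are preserved by the endomorphism $\theta$, so once the reduction to $w^{[t]}\in E$ is in place the rest is formal.
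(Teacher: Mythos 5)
Your proof is correct, and although it rests on the same two pillars as the paper's argument --- the power characterization of $rad(E)$ from Theorem 4.23 of \cite{sorc1} and a single application of the $Endo$-prime hypothesis to a regrouped product --- the regrouping you choose is genuinely different and, in fact, better adapted to the definition. The paper rewrites $k\big(k(u_1^n)^{(r)},1_H^{(n-r)}\big)$ so as to isolate the $r$ copies of one fixed $u_i$ and bundle the $r$-th power of the complementary product $k(u_1^{i-1},1_H,u_{i+1}^n)$ into a single factor $w$; applying the $Endo$-prime property to the resulting $n$-tuple $(u_i^{(r)},w,1_H^{(n-r-1)})$ yields the two alternatives $k(u_1^{i-1},1_H,u_{i+1}^n)\in rad(E)$ or $\theta(u_i)\in rad(E)$, i.e.\ the conclusion with the roles of $u_i$ and the complementary product interchanged relative to the defining condition, and it tacitly assumes the existentially quantified index produced by the definition lands on the factor $w$ (the display $k\big(u_i^{(r)},k(u_1^{i-1},1_H,u_{i+1}^n)^{(r)},1_H^{(n-2r)}\big)$ also needs $2r\le n$ to parse as written). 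You instead distribute the $t$-th power over all $n$ factors and apply the hypothesis to $k(u_1^{[t]},\ldots,u_n^{[t]})$; then whichever index $i$ the definition supplies, the two alternatives are exactly $u_i^{[t]}\in E$ and $\theta(c^{[t]})=(\theta(c))^{[t]}\in E$ with $c=k(u_1^{i-1},1_H,u_{i+1}^n)$, which translate verbatim into $u_i\in rad(E)$ or $\theta\big(k(u_1^{i-1},1_H,u_{i+1}^n)\big)\in rad(E)$, the condition actually required. The cost is the bookkeeping you flag yourself --- justifying $(k(a_1^n))^{[t]}=k(a_1^{[t]},\ldots,a_n^{[t]})$, $\theta(a^{[t]})=(\theta(a))^{[t]}$, and the choice of a common admissible exponent $t$ across the regimes $r\le n$ and $r=l(n-1)+1$ (recall that for $r>n$ only exponents of the form $l(n-1)+1$ are meaningful, so ``a single $t$'' must be taken in that set, using that $E$ absorbs extra factors) --- but this is the same associativity--commutativity juggling the paper performs and likewise leaves partly implicit, so nothing essential is lost. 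Your preliminary observation that $rad(E)$ is proper is a small but genuine addition the paper omits.
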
 
\begin{proof} 
Assume that $k(u_1^n) \in rad(E)$ for $u_1^n \in H$. Then there exists $r \in \mathbb{N}$ such that if $r \leq n$, then $k\big(k(u_1^n)^{(r)},1_H^{n-r}\big) \in E$. Therefore, by associativity we get\\

$\hspace{1.2cm}k\big(u_i^{(r)},k(u_1^{i-1},1_H,u_{i+1}^n)^{(r)},1_H^{(n-2r)} \big) \in E$

$\hspace{0.5cm} \Longrightarrow k\big(u_i^{(r)},k(u_1^{i-1},1_H,u_{i+1}^n)^{(r)},k(1_H^{(n)}),1_H^{(n-2r-1)} \big) \in E$

$\hspace{0.5cm} \Longrightarrow k\big(u_i^{(r)},k(k(u_1^{i-1},1_H,u_{i+1}^n)^{(r)},1_H^{(n-r)}),1_H^{(n-r-1)} \big) \in E$\\

This implies that $k\big(k(u_1^{i-1},1_H,u_{i+1}^n)^{(r)},1_H^{(n-r)}\big) \in E$ or $k\big(\theta(u_i)^{(r)},1_H^{(n-r)}\big)=\theta \big( k(u_i^{(r)},1_H^{(n-r)})\big) \in E$ as $E$ is   an $n$-ary $Endo$-prime hyperideal associated with $\theta$. Then we conclude that $k(u_1^{i-1},1_H,u_{i+1}^n) \in rad(E)$ or $\theta(u_i) \in rad(E)$. This means that $rad(E)$ is an $n$-ary $Endo$-prime hyperideal associated with $\theta$. If $r=l(n-1)+1$, then we have $k_{(l)}(k(u_1^n)^{(r)}) \in E$ and  similar to the before part we get the result that $rad(E)$ is an $n$-ary $Endo$-prime hyperideal associated with $\theta$.
\end{proof} 
The following result shows that every $n$-ary $Endo$-prime hyperideal of $H$ associated to $\theta$ contains $\theta(E)$.
\begin{proposition} \label{2}
Let $\theta : H \longrightarrow H$  be an   endomorphism. If  $E$ is an $n$-ary $Endo$-prime hyperideal of $H$ associated to $\theta$, then $\theta(E) \subseteq E$.
\end{proposition}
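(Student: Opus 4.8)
The plan is to reproduce, in the $n$-ary hyperring setting, the classical argument that a $\theta$-prime ideal absorbs $\theta$: feed the scalar identity into the defining condition so that all but one of the arguments are forced to lie outside $E$, leaving the $\theta$-branch as the only available conclusion. The first preliminary step I would record is that the proper hyperideal $E$ cannot contain $1_H$. Indeed, if $1_H \in E$, then for each $v \in H$ the absorption property of a hyperideal (with the element of $E$ placed in a multiplicative slot) together with the scalar identity $k(v,1_H^{(n-1)})=v$ would force $v \in E$, giving $E=H$ and contradicting properness. Hence $1_H \notin E$.

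Next I fix an arbitrary $x \in E$, the goal being $\theta(x) \in E$, and I apply the Endo-prime condition to the $n$-tuple $(1_H,\ldots,1_H,x)$. By the scalar identity and commutativity of $k$ we have $k(1_H^{(n-1)},x)=x \in E$, so the defining property is applicable and yields an index $i$ with $u_i \in E$ or $\theta\big(k(u_1^{i-1},1_H,u_{i+1}^n)\big) \in E$. For any position $i<n$ the corresponding entry is $u_i=1_H\notin E$ by the previous paragraph, while replacing that entry by $1_H$ leaves the product unchanged, namely $k(u_1^{i-1},1_H,u_{i+1}^n)=k(1_H^{(n-1)},x)=x$. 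Thus, as soon as the witnessing index lies among the first $n-1$ slots, the condition collapses to $\theta(x)\in E$, which is exactly what is required; since $x$ was arbitrary this delivers $\theta(E)\subseteq E$.

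The step I expect to be the main obstacle is the existential quantifier ``for some $i$'' in the definition. A priori the guaranteed index could be the last slot, where $u_n=x\in E$ satisfies the membership branch trivially and yields no information about $\theta(x)$. The delicate point is therefore to guarantee that it is the $\theta$-branch, attached to a $1_H$-position, that must fire. I would address this by exploiting that the remaining $n-1$ entries all equal $1_H$ and hence avoid the proper ideal $E$, so that the uninformative membership branch is available only at the single slot occupied by $x$; engineering the tuple so that this slot cannot be the operative one is the crux of the argument, and it is precisely here that the scalar identity $k(v,1_H^{(n-1)})=v$ and the commutativity of $k$ carry the weight.
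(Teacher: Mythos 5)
Your proposal follows exactly the route the paper itself takes: feed the tuple $(x,1_H^{(n-1)})$ into the defining condition, observe that $1_H\notin E$ because $E$ is a proper hyperideal, and read off $\theta(x)=\theta\bigl(k(x,1_H^{(n-1)})\bigr)\in E$. The preliminary step ($1_H\notin E$) is correct and is the same one the paper relies on. But as written your argument is not complete: you correctly isolate the danger that the existential quantifier ``for some $i$'' in the definition may be witnessed by the slot occupied by $x$, where the membership branch $u_i\in E$ holds vacuously and yields no information about $\theta(x)$, and you then declare that ruling this out is ``the crux of the argument'' without actually doing it. The scalar identity and the commutativity of $k$, which you invoke at that point, only show that \emph{if} the witness is a $1_H$-slot then the $\theta$-branch reads $\theta(x)\in E$; they do nothing to force the witness to be such a slot. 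So the step you flag as the main obstacle is a genuine gap, not a deferred detail.

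For comparison, the paper's proof makes the same move silently: from $k(u,1_H^{(n-1)})\in E$ it asserts ``$1_H\in E$ or $\theta(u)\in E$,'' i.e.\ it treats the disjunction at a $1_H$-slot as guaranteed. That is legitimate only if the definition is read so that the disjunction $u_i\in E$ or $\theta\bigl(k(u_1^{i-1},1_H,u_{i+1}^n)\bigr)\in E$ holds at an index of one's choosing, not merely at some index. Under the literal existential reading, the statement cannot be obtained this way, and it is in fact in tension with Remark \ref{haji}: that remark makes every $n$-ary prime hyperideal $Endo$-prime for every endomorphism $\theta$, whereas Proposition \ref{2} would then force every prime hyperideal to be $\theta$-invariant, which already fails for ordinary prime ideals (take a product of two hyperfields with the swap endomorphism and one factor as the prime hyperideal). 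So your diagnosis of where the difficulty lies is exactly right; what is missing — from your write-up and arguably from the paper's — is either the stronger, universally quantified reading of the definition or an argument relocating the witness to a $1_H$-slot.
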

\begin{proof}
Let $u \in E$. Then we have $ k(u,1_H^{(n-1)}) \in E$ which implies $1_H \in E$ or $\theta(u)=\theta \big(k(u,1_H^{(n-1)}) \big) \in E$. Since the first case is impossible, we have $\theta(u) \in E$ and so $\theta(E) \subseteq E$.
\end{proof}
The next example shows that the converse of Proposition \ref{2} is not
true, in general.
\begin{example} \label{pelarak}
Consider the Krasner $(2,2)$-hyperfield $(H=\{0,1,u,v,w\}, \boxplus, \circ)$ where the hyperaddition $ \boxplus$ and multiplication $\circ$ defiened by

\[\begin{tabular}{|c|c|c|c|c|c|c|} 
\hline   $\boxplus$  & $0$ & $1$ & $u$ & $v$ & $w$
\\ \hline $0$ & $\{0\}$ & $\{1\}$ & $\{u\}$ & $\{v\}$ & $\{w\}$
\\ \hline $1$ & $\{1\}$ & $\{1\}$ & $\{1,u\}$ & $H$ & $\{1,w\}$ 
\\ \hline $u$ & $\{u\}$ & $\{1,u\}$ & $\{u\}$ & $\{u,v\}$ & $H$ 
\\ \hline $v$ & $\{v\}$ & $H$ & $\{u,v\}$ & $\{v\}$ & $\{v,w\}$
\\ \hline $w$ & $\{w\}$ & $\{1,w\}$ & $H$ & $\{v,w\}$ & $\{w\}$
\\ \hline
\end{tabular}\]

\[\begin{tabular}{|c|c|c|c|c|c|c|} 
\hline $\circ$ & $0$ & $1$ & $u$ & $v$ & $w$
\\ \hline $0$ & $0$ & $0$ & $0$ & $0$ & $0$  
\\ \hline $1$ & $0$ & $1$ & $u$ & $v$ & $w$
\\ \hline $u$ & $0$ & $u$ & $v$ & $w$ & $1$
\\ \hline $v$ & $0$ & $v$ & $w$ & $1$ & $u$
\\ \hline $w$  & $0$ & $w$ & $1$ &  $u$ & $v$
\\ \hline
\end{tabular}\]

Define the endomorphism $\theta : H \times H \longrightarrow H \times H$ by $(x,y) \mapsto (y,x)$. The hyperideal $E=(0,0)$ of $H \times H$ is not an $2$-ary $Endo$-prime hyperideal associated with $\theta$ as $(1,0) \circ (0,1) \in E$ but neither $(1,0), (0,1) \in E$ nor $\theta(1,0), \theta(0,1) \in E$. However, $\theta(E) \subseteq E$.
\end{example}
\begin{proposition} \label{bagheri}
Let $E$ be a proper hyperideal of $H$, $\theta : H \longrightarrow H$  an   endomorphism and $u \in H$. If $E$ is   an $n$-ary $Endo$-prime hyperideal associated with $\theta$, then 
\begin{itemize} 
\item[\rm{(1)}]~ $E^{\prime}=\{u \in H \ \vert \ \theta(u) \in E\}$ is an $n$-ary $Endo$-prime hyperideal associated with $\theta$.
\item[\rm{(2)}]~ $(E:A)$ is an $n$-ary $Endo$-prime hyperideal associated with $\theta$ where $A$ is a subset of $H$.
\item[\rm{(3)}]~$k \big(u^ {(r)} , 1_H^{(n-r)} ) \big) \in E$ with $r \leq n$, or $  k_{(l)} (u^ {(r)} )  \in E$ with $r = l(n-1) + 1$ implies that $\theta (u) \in E$.

\item[\rm{(4)}]~ $k \big( \theta(u)^ {(r)} , 1_H^{(n-r)} ) \big) \in E$ with $r \leq n$, or $  k_{(l)} (\theta(u)^ {(r)} )  \in E$ with $r = l(n-1) + 1$ implies that $\theta^2(u) \in E$.
\end{itemize}
\end{proposition}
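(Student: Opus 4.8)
My plan throughout is uniform: to prove that a constructed hyperideal $F$ is $Endo$-prime, I take $k(w_1^n)\in F$, rewrite this as a membership relation in $E$, apply the $Endo$-prime property of $E$, and translate the two resulting alternatives back into statements about $F$. Every translation rests on $\theta$ being a homomorphism with $\theta(1_H)=1_H$ (so $\theta$ commutes with $k$ and fixes the padding $1_H$'s) together with Proposition \ref{2}, i.e. $\theta(E)\subseteq E$. For (1), note first that $E^{\prime}=\theta^{-1}(E)$ is a hyperideal as the preimage of a hyperideal under a homomorphism, and it is proper since $1_H\in E^{\prime}$ would give $1_H=\theta(1_H)\in E$. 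If $k(u_1^n)\in E^{\prime}$, then $k(\theta(u_1),\ldots,\theta(u_n))=\theta(k(u_1^n))\in E$, and the $Endo$-prime property of $E$ applied to $(\theta(u_1),\ldots,\theta(u_n))$ gives, for some $i$, either $\theta(u_i)\in E$, i.e. $u_i\in E^{\prime}$, or $\theta\big(k(\theta(u_1),\ldots,1_H,\ldots,\theta(u_n))\big)\in E$. Using $\theta(1_H)=1_H$ I rewrite the inner product as $\theta\big(k(u_1^{i-1},1_H,u_{i+1}^n)\big)$, so the second alternative becomes $\theta^2\big(k(u_1^{i-1},1_H,u_{i+1}^n)\big)\in E$, which is exactly $\theta\big(k(u_1^{i-1},1_H,u_{i+1}^n)\big)\in E^{\prime}$. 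Both alternatives match the definition, so $E^{\prime}$ is $Endo$-prime.

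For (2) I write $(E:A)=\bigcap_{a\in A}(E:a)$ and treat a single $a$, the heart of the matter. Given $k(w_1^n)\in(E:a)$, the definition says $k\big(a,k(w_1^n),1_H^{(n-2)}\big)\in E$, and by commutativity and associativity this equals $k\big(k(a,w_1,1_H^{(n-2)}),w_2^n\big)\in E$. Applying the $Endo$-prime property of $E$ and folding $a$ back out: if the knocked-out position is $i=1$ I obtain $w_1\in(E:a)$ or $\theta\big(k(1_H,w_2^n)\big)\in E\subseteq(E:a)$, both acceptable (here I use $E\subseteq(E:a)$, which holds because $E$ is a hyperideal); for $i\ge 2$ I obtain $w_i\in E\subseteq(E:a)$ or $\theta\big(k(a,c,1_H^{(n-2)})\big)\in E$ with $c=k(w_1^{i-1},1_H,w_{i+1}^n)$. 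The delicate point is this last relation: it reads $k(\theta(a),\theta(c),1_H^{(n-2)})\in E$, whereas the conclusion I want, $\theta(c)\in(E:a)$, reads $k(a,\theta(c),1_H^{(n-2)})\in E$, so the factor $a$ has silently become $\theta(a)$. I would attack it by applying the $Endo$-prime property once more to $k(\theta(a),\theta(c),1_H^{(n-2)})\in E$: the branch $\theta(c)\in E$ immediately yields $\theta(c)\in E\subseteq(E:a)$, and the remaining branches require further care. The containment $E\subseteq(E:a)$ is also what forces $(E:a)$ to be proper precisely when $a\notin E$, and a secondary issue in passing to the full intersection $(E:A)$ is that the index $i$ furnished by $E$ may depend on $a$, so one must argue that a single index can be chosen uniformly over $a\in A$.

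For (3), with $k(u^{(r)},1_H^{(n-r)})\in E$ and $r\le n$, I argue by induction on $r$. The base $r=1$ is $u\in E$, whence $\theta(u)\in E$ by $\theta(E)\subseteq E$. For the step I apply the $Endo$-prime property to $k(u,u^{(r-1)},1_H^{(n-r)})\in E$; since the $u$-entries are interchangeable, the alternatives collapse to $u\in E$ (giving $\theta(u)\in E$), or $\theta\big(k(u^{(r-1)},1_H^{(n-r+1)})\big)=k(\theta(u)^{(r-1)},1_H^{(n-r+1)})\in E$, or $k(\theta(u)^{(r)},1_H^{(n-r)})\in E$. I expect these last two branches to be the main obstacle: each says only that some power of $\theta(u)$ lies in $E$, that is $\theta(u)\in rad(E)$, and descending from a power of $\theta(u)$ to $\theta(u)$ itself is the crux — it is exactly here that the gap between $E$ and $rad(E)$ must be navigated, and that the strictly smaller power $r-1$ should be fed into the inductive hypothesis. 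The case $r=l(n-1)+1>n$ is identical after replacing $k$ by the iterated $k_{(l)}$. Finally, (4) is immediate from (3): substituting $\theta(u)$ for $u$ turns the hypothesis $k(\theta(u)^{(r)},1_H^{(n-r)})\in E$ into the conclusion $\theta(\theta(u))=\theta^2(u)\in E$.
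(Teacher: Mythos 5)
Your parts (1) and (4) are correct and coincide with the paper's own argument. The genuine gaps are in (2) and (3), and in both cases they are exactly the points you flag yourself but do not close. In (3), the induction on $r$ cannot work as set up: applying the $Endo$-prime condition to $k(u^{(r)},1_H^{(n-r)})\in E$ leaves you with surviving branches of the form $k\big(\theta(u)^{(r-1)},1_H^{(n-r+1)}\big)\in E$ or $k\big(\theta(u)^{(r)},1_H^{(n-r)}\big)\in E$, and feeding these back into the same scheme only produces $\theta^2(u)\in E$, $\theta^3(u)\in E$, and so on --- the exponent on $\theta$ escalates and never returns to $1$. The paper declares (3) ``obvious,'' but the mechanism it actually uses for this kind of statement appears in its proof of Proposition \ref{2.1}: one may assume $u\notin E$ (otherwise $\theta(u)\in E$ by Proposition \ref{2}), take $r$ \emph{minimal} with $k(u^{(r)},1_H^{(n-r)})\in E$, and regroup the product as $k\big(k(u^{(r-1)},1_H^{(n-r+1)}),u,1_H^{(n-2)}\big)$; then the two ``element'' alternatives $k(u^{(r-1)},1_H^{(n-r+1)})\in E$ and $u\in E$ are both excluded, so the $\theta$-alternative at the first slot yields $\theta\big(k(1_H,u,1_H^{(n-2)})\big)=\theta(u)\in E$ directly. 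Minimality of $r$, not induction on $r$, is the missing idea.

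In (2) you correctly isolate the fatal branch: from $k\big(k(a,w_1,1_H^{(n-2)}),w_2^n\big)\in E$ with knocked-out position $i\ge 2$ you obtain $k\big(\theta(a),\theta(c),1_H^{(n-2)}\big)\in E$, whereas the desired conclusion $\theta(c)\in(E:a)$ requires $k\big(a,\theta(c),1_H^{(n-2)}\big)\in E$; your proposed second application of the $Endo$-prime property leaves open the branches in which neither $\theta(c)\in E$ nor the wanted membership results, and you also leave unresolved the dependence of the selected index $i$ on $a\in A$ when passing to $\bigcap_{a\in A}(E:a)$. So (2) is not established. For comparison, the paper does not decompose over the elements of $A$ at all: it rewrites $k(u_1^n)\in(E:A)$ as $k(1_H,u_2^n)\in\big(E:k(u_1,A,1_H^{(n-2)})\big)$, asserts the decomposition $\big(E:k(u_1,A,1_H^{(n-2)})\big)=(E:u_1)\cup(E:A)$, and then splits into the two cases $k(u_1^n)\in E$ and $k(1_H,u_2^n)\in(E:A)$, invoking part (1) in the second case. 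Whichever route is taken, the step you are missing is a mechanism for carrying the fixed multiplier $a$ (or $A$) through the argument without it being hit by $\theta$; until one of your open branches is closed, the claim is not proved.
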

\begin{proof}

(1) Let $k(u_1^n) \in E^{\prime}$ for $u_1^n \in H$. Then we get $k \big (\theta(u_1),\cdots,\theta(u_n) \big)=\theta \big(k(u_1^n) \big) \in E$. Since $E$ is   an $n$-ary $Endo$-prime hyperideal associated with $\theta$, we have $\theta(u_i) \in E$ for some $i \in \{1,\ldots,n\}$ or we get $\theta \big( \theta \big( k(u_1^{i-1},1_H,u_{i+1}^n) \big) \big)=\theta \big(k \big( \theta(u_1),\cdots,\theta(u_{i-1}),1_H,\theta(u_{i+1}),\cdots,\theta(u_n) \big) \big) \in E$ . Hence, we conclude that $u_i \in E^{\prime}$ or $\theta \big( k(u_1^{i-1},1_H,u_{i+1}^n) \big) \in E^{\prime}$. Consequently, $E^{\prime}$ is an $n$-ary $Endo$-prime hyperideal associated with $\theta$.

(2) Let $A$ be a subset of $H$ and $k(u_1^n) \in (E :A)$ for $u_1^n \in H$. Therefore, we have $k(1_H,u_2^n) \in \big (E : k(u_1,A,1_H^{(n-2)}) \big)=(E:u_1) \cup (E:A)$. Hence $k(u_1^n) \in E$ or $k( 1_H,u_2^n) \in (E : A)$. In the first case, we get  $u_i \in E$ or $\theta \big( k(u_1^{i-1},1_H,u_{i+1}^n) \big) \in E$ for some $i \in \{1,\ldots,n\}$ as $E$ is   an $n$-ary $Endo$-prime hyperideal associated with $\theta$. In the second case, we have  $k( 1_H,u_2^n) \in (E : A)$ and so $\theta \big(k( 1_H,u_2^n) \big) \in (E : A)$ by (1). Thus, we obtain that $u_i \in (E:A)$ or $\theta \big( k(u_1^{i-1},1_H,u_{i+1}^n) \big) \in (E : A)$, that is, $(E:A)$ is an $n$-ary $Endo$-prime hyperideal associated with $\theta$.

(3) It is obvious.

(4) Let   $u \in H$ and $k \big( \theta(u)^ {(r)} , 1_H^{(n-r)} ) \big) \in E$ with $r \leq n$, or $ \big(k_{(l)} (\theta(u)^ {(r)} ) \big) \in E$ with $r = l(n-1) + 1$. Put $v=\theta(u)$. So, we have $k \big(v^ {(r)} , 1_H^{(n-r)} ) \big) \in E$ with $r \leq n$, or $ \big(k_{(l)} (v^ {(r)} ) \big) \in E$ with $r = l(n-1) + 1$. By (4), we conclude that $\theta(v) \in E$ which means $\theta^2(u) \in E$.
\end{proof}
Our next theorem characterizes the behavior of hyperideals in relation to $E$ and how to ensure that $E$ is an $Endo$-prime hyperideal in the Krasner $(m,n)$-hyperring $H$.
\begin{theorem} \label{madar}
Let $E$ be a proper hyperideal of $H$ and $\theta : H \longrightarrow H$  be an   endomorphism. Then  the following are equivalent:
\begin{itemize} 
 \item[\rm{(1)}]~ $E$ is an $n$-ary $Endo$-prime hyperideal associated to $\theta$.
\item[\rm{(2)}]~ $\langle u \rangle \subseteq E$ or $\theta(E:u) \subseteq E$ for all $u \in H$.
\item[\rm{(3)}]~ $k(E_1^n) \subseteq E$ for all hyperideals $E_1^n$ of $H$ implies $E_i \subseteq E$ or $\theta \big(k(E_1^{i-1},1_H,E_{i+1}^n) \big) \subseteq E$ for some $i \in \{1,\ldots,n\}$.
\item[\rm{(4)}]~ $E=\big(E:k(u_1^{i-1},1_H,u_{i+1}^n) \big)$ for all $u_1^{i-1},u_{i+1}^n \in H$ with $\theta \big(k(u_1^{i-1},1_H,u_{i+1}^n) \big) \notin E.$ 
\end{itemize}
\end{theorem}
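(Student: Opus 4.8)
The plan is to prove the four statements equivalent through the cycle $(1)\Rightarrow(2)\Rightarrow(3)\Rightarrow(1)$ together with the separate equivalence $(1)\Leftrightarrow(4)$. Throughout I will lean on two elementary reductions. First, since $u=k(u,1_H^{(n-1)})\in\langle u\rangle$ and $E$ is a hyperideal, one has $\langle u\rangle\subseteq E$ if and only if $u\in E$; thus the first alternative in $(2)$ and $(3)$ is simply membership in $E$. Second, for a product $a=k(u_1^{i-1},1_H,u_{i+1}^n)$ the associativity of $k$ and the scalar identity give $k(a,v,1_H^{(n-2)})=k(u_1^{i-1},v,u_{i+1}^n)$, so that $v\in(E:a)$ is the same as $k(u_1^{i-1},v,u_{i+1}^n)\in E$. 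I will also repeatedly invoke Proposition \ref{2}, i.e.\ $\theta(E)\subseteq E$, to discharge the cases in which a factor already lies in $E$.

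For $(1)\Rightarrow(2)$ I fix $u\in H$, assume $\langle u\rangle\not\subseteq E$ (equivalently $u\notin E$), and take any $v\in(E:u)$, so $k(u,v,1_H^{(n-2)})\in E$; applying the defining property to the tuple $(u,v,1_H,\ldots,1_H)$ at the slot carrying $u$ gives $u\in E$ or $\theta\big(k(1_H,v,1_H^{(n-2)})\big)=\theta(v)\in E$, and since $u\notin E$ we conclude $\theta(E:u)\subseteq E$. For $(2)\Rightarrow(3)$, suppose $k(E_1^n)\subseteq E$ with $E_1\not\subseteq E$ and choose $x_1\in E_1\setminus E$; then $\theta(E:x_1)\subseteq E$ by $(2)$, and for any $y\in k(1_H,E_2,\ldots,E_n)$ the inclusion $k(x_1,y,1_H^{(n-2)})\subseteq k(E_1^n)\subseteq E$ shows $y\in(E:x_1)$, hence $\theta(y)\in E$; this is $(3)$ with $i=1$. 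For $(3)\Rightarrow(1)$, given $k(u_1^n)\in E$ I apply $(3)$ to the principal hyperideals $E_j=\langle u_j\rangle$, where a short associativity computation yields $k(\langle u_1\rangle,\ldots,\langle u_n\rangle)\subseteq E$; statement $(3)$ then returns an index $i$ with $\langle u_i\rangle\subseteq E$, i.e.\ $u_i\in E$, or $\theta\big(k(\langle u_1\rangle,\ldots,1_H,\ldots,\langle u_n\rangle)\big)\subseteq E$ (the $1_H$ in slot $i$), and since $k(u_1^{i-1},1_H,u_{i+1}^n)$ lies in that product this is exactly the conclusion of $(1)$.

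It remains to treat $(1)\Leftrightarrow(4)$. Writing $a=k(u_1^{i-1},1_H,u_{i+1}^n)$, and noting that $E\subseteq(E:a)$ always holds, condition $(4)$ amounts to $(E:a)\subseteq E$ whenever $\theta(a)\notin E$. For $(4)\Rightarrow(1)$, given $k(u_1^n)\in E$ I take $a=k(1_H,u_2^n)$: if $\theta(a)\in E$ the second alternative of $(1)$ holds with $i=1$, and otherwise $(4)$ gives $u_1\in(E:a)=E$, which is the first alternative. For $(1)\Rightarrow(4)$, I take $v\in(E:a)$, rewrite $k(a,v,1_H^{(n-2)})=k(u_1^{i-1},v,u_{i+1}^n)\in E$, and apply the defining property at the slot occupied by $v$, obtaining $v\in E$ or $\theta(a)\in E$; the hypothesis $\theta(a)\notin E$ forces $v\in E$, so $(E:a)\subseteq E$.

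The main obstacle is the index bookkeeping in the two implications that \emph{produce} the defining or colon condition, namely $(1)\Rightarrow(2)$ and $(1)\Rightarrow(4)$: there one must invoke the $Endo$-prime property at a \emph{prescribed} slot (the one carrying $v$) rather than at whichever slot it happens to single out. This is handled by placing the relevant element in the distinguished position, collapsing the complementary product to $a$ via associativity and the scalar identity, and using $\theta(E)\subseteq E$ from Proposition \ref{2} to absorb the residual cases where a factor is already in $E$. Everything else reduces to routine manipulation of $n$-ary products of hyperideals and the identification $\langle u\rangle\subseteq E\Leftrightarrow u\in E$.
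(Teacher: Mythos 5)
Your cycle $(1)\Rightarrow(2)\Rightarrow(3)\Rightarrow(1)$ together with the separate equivalence $(1)\Leftrightarrow(4)$ is a reasonable reorganization (the paper runs the single cycle $(1)\Rightarrow(2)\Rightarrow(3)\Rightarrow(4)\Rightarrow(1)$), and your $(2)\Rightarrow(3)$, $(3)\Rightarrow(1)$ and $(4)\Rightarrow(1)$ agree in substance with the paper's arguments. The genuine gap is exactly the point you flag as ``the main obstacle'' and then claim to have handled: in $(1)\Rightarrow(2)$ and $(1)\Rightarrow(4)$ you apply the defining property ``at the slot carrying $u$'' (resp.\ ``at the slot occupied by $v$''), but the definition only asserts that \emph{some} index $i$ witnesses the disjunction, and you do not get to choose which one. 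Your proposed repair --- absorbing ``the residual cases where a factor is already in $E$'' via $\theta(E)\subseteq E$ --- does not cover the cases that actually arise. For $(1)\Rightarrow(2)$ applied to the tuple $(u,v,1_H^{(n-2)})$ with $n\ge 3$, any witness $i\ge 3$ returns the disjunct ``$1_H\in E$ or $\theta\big(k(u,v,1_H^{(n-2)})\big)\in E$'', whose second half is automatically true (since $k(u,v,1_H^{(n-2)})\in E$ and $\theta(E)\subseteq E$) and carries no information about $v$; even for $n=2$ the witness $i=2$ may return only $\theta(u)\in E$, which does not yield $\theta(v)\in E$. The same defect occurs in your $(1)\Rightarrow(4)$: a witness at a slot $j\neq i$ can return $\theta\big(k(u_1^{j-1},1_H,u_{j+1}^{i-1},v,u_{i+1}^n)\big)\in E$, from which $v\in E$ does not follow.

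The paper avoids (at least formally) having to pin the existential witness to a particular slot: it obtains $(1)\Rightarrow(2)$ by citing Proposition \ref{bagheri}(2) (that $(E:u)$ is again an $n$-ary $Endo$-prime hyperideal) together with Proposition \ref{2}, and it derives $(4)$ from $(3)$ rather than from $(1)$, by applying $(3)$ to the hyperideals $\langle u_1\rangle,\dots,\langle u_{i-1}\rangle,\big(E:k(u_1^{i-1},1_H,u_{i+1}^n)\big),\langle u_{i+1}\rangle,\dots,\langle u_n\rangle$. If you want to keep your architecture, you must either work with a version of the definition in which the disjunction holds for the index of your choosing, or reroute these two implications through statement $(3)$ as the paper does; as written, the two steps $(1)\Rightarrow(2)$ and $(1)\Rightarrow(4)$ do not go through.
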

\begin{proof}
(1) $\Longrightarrow$ (2) It is clear by Proposition \ref{2} and Proposition \ref{bagheri} (2).

(2) $\Longrightarrow$ (3) $k(E_1^n) \subseteq E$ for all hyperideals $E_1^n$ of $H$ such that $E_i \nsubseteq E$ for any $i \in \{1,\ldots,n\}$. Take any $u_i \in E_i$ for some $i \in \{1,\ldots,n\}$. Then we get $k(u_1^{i-1},1_H,u_{i+1}^n) \in (E:u_i)$ for all $u_j \in E_j$ where $j \in \{1,\ldots,i-1,i+1,\ldots,n\}$. By the hypothesis, we obtain that $\theta \big(k(u_1^{i-1},1_H,u_{i+1}^n) \big) \subseteq  \theta (E:u_i) \subseteq E$ which means $\theta \big(k(E_1^{i-1},1_H,E_{i+1}^n) \big) \subseteq E$.

(3) $\Longrightarrow$ (4) Let  $\theta \big(k(u_1^{i-1},1_H,u_{i+1}^n) \big) \notin E $ for $u_1^{i-1},u_{i+1}^n \in H$ and $I=\big( E:k(u_1^{i-1},1_H,u_{i+1}^n)\big)$. Since $ k \big(\langle k(u_1^{i-1},1_H,u_{i+1}^n) \rangle, I, 1^{(n-2)} \big) \subseteq E$, we conclude that $k \big (\langle u_1 \rangle, \ldots,\langle u_{i-1} \rangle,  I, \langle u_{i+1} \rangle, \ldots,\langle u_n \rangle \big) \subseteq E$. Since $\theta \big(k(u_1^{i-1},1_H,u_{i+1}^n) \big) \notin E $, we obtain  $I=\big( E:k(u_1^{i-1},1_H,u_{i+1}^n)\big) \subseteq E$ as needed.

(4) $\Longrightarrow$ (1) Let $k(u_1^n) \in E$ for $u_1^n \in H$ but $\theta \big(k(u_1^{i-1},1_H,u_{i+1}^n) \big) \notin E$ for $i \in \{1,\ldots,n\}$. It follows that $u_i \in \big( E: k(u_1^{i-1},1_H,u_{i+1}^n) \big)$. Hence, we get $u_i \in E$ by the assumption. Consequently, $E$ is an $n$-ary $Endo$-prime hyperideal associated to $\theta$.
\end{proof}
Recall that, in general, the intersection of a family of $Endo$-prime hyperideals  associated to $\theta$ is not $Endo$-prime hyperideal, but we have the following results.
\begin{theorem} \label{inter}
Let $\eta:H_1 \longrightarrow H_2$ be a homomorphism and $\{E_j\}_{j \in \omega}$ is a chain of hyperideals of $H$. If $E_j$ is an  $n$-ary $Endo$-prime hyperideal   associated to $\theta$ for every $j \in \omega$, then so is $\cap_{j \in \omega} E_j$.
\end{theorem}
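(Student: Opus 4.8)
The plan is to verify directly that $E := \bigcap_{j \in \omega} E_j$ satisfies the defining implication of an $n$-ary $Endo$-prime hyperideal associated to $\theta$; the homomorphism $\eta$ appears in the hypothesis but plays no role in the conclusion, so I would not invoke it. First I would record that $E$ is a proper hyperideal: an intersection of hyperideals of $H$ is again a hyperideal, and since each $E_j$ is proper we have $1_H \notin E_j$ (otherwise $k(u,1_H^{(n-1)})=u \in E_j$ for every $u$ would force $E_j=H$), whence $1_H \notin E$ and $E \neq H$.

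For the main implication I would suppose $k(u_1^n) \in E$ for some $u_1^n \in H$ and argue by contradiction, assuming that for \emph{every} $i \in \{1,\ldots,n\}$ both $u_i \notin E$ and $\theta\big(k(u_1^{i-1},1_H,u_{i+1}^n)\big) \notin E$. Since $E$ is the intersection of the $E_j$, for each $i$ the non-membership $u_i \notin E$ produces an index $a_i$ with $u_i \notin E_{a_i}$, and likewise $\theta\big(k(u_1^{i-1},1_H,u_{i+1}^n)\big) \notin E$ produces an index $b_i$ with $\theta\big(k(u_1^{i-1},1_H,u_{i+1}^n)\big) \notin E_{b_i}$.

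The crux — and the only place the chain hypothesis is used — is to collapse the finitely many witnessing ideals $E_{a_1},\ldots,E_{a_n},E_{b_1},\ldots,E_{b_n}$ into a single member of the chain. Because $\{E_j\}_{j\in\omega}$ is totally ordered by inclusion, this finite subfamily has a smallest element $E_m$, and $E_m \subseteq E_{a_i}$ and $E_m \subseteq E_{b_i}$ for every $i$. Containment then transfers the non-memberships downward: since $E_m \subseteq E_{a_i}$ and $u_i \notin E_{a_i}$ we get $u_i \notin E_m$, and similarly $\theta\big(k(u_1^{i-1},1_H,u_{i+1}^n)\big) \notin E_m$, for all $i$. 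On the other hand $k(u_1^n) \in E \subseteq E_m$, so applying the $n$-ary $Endo$-prime property of $E_m$ (which holds by hypothesis, as $E_m$ is one of the $E_j$) yields some $i$ with $u_i \in E_m$ or $\theta\big(k(u_1^{i-1},1_H,u_{i+1}^n)\big) \in E_m$, contradicting what was just established.

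Hence the assumed failure is impossible, so whenever $k(u_1^n)\in E$ there is an $i$ with $u_i \in E$ or $\theta\big(k(u_1^{i-1},1_H,u_{i+1}^n)\big) \in E$, which is precisely the claim. I expect the extraction of the single ideal $E_m$ from the finite set of witnesses via the total ordering of the chain to be the decisive step — it is exactly what rescues the argument from the fact that the successful index $i$ may vary with $j$ for a general (non-chain) family — while the downward transfer of non-membership and the final appeal to the $Endo$-prime property of $E_m$ are routine bookkeeping of the definition.
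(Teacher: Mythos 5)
Your proof is correct and follows essentially the same route as the paper's: assume the conclusion fails for $\bigcap_{j\in\omega}E_j$, extract witnessing members of the chain for each non-membership, use the total ordering to pass to a single smallest member, and contradict its $Endo$-prime property. You are in fact more careful than the paper at the one delicate point: the paper compares only two witnesses $E_a$ and $E_b$ (tacitly for a fixed $i$), which by itself does not contradict the definition, since the $Endo$-prime property only asserts the existence of \emph{some} index $i$; your step of taking the minimum of the full finite family $E_{a_1},\ldots,E_{a_n},E_{b_1},\ldots,E_{b_n}$ so that all $2n$ non-memberships hold simultaneously in one $E_m$ is exactly what is needed to close the argument.
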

\begin{proof}
Let $k(u_1^n) \in \cap_{j \in \omega} E_j$ for $u_1^n \in H$ such that neither $u_i \in \cap_{j \in \omega} E_j$ nor $\theta(u_1^{i-1},1_H,u_{i+1}^n) \in \cap_{j \in \omega} E_j$ for all $i \in \{1,\ldots,n\}$. Then, we conclude that   $u_i \notin E_a$ and $\theta(u_1^{i-1},1_H,u_{i+1}^n) \notin E_b$ for some $a,b \in \omega$. Let $E_a \subseteq E_b$. In this case, we have  $k(u_1^n) \in E_a$ but $u_i \notin E_a$ and $\theta(u_1^{i-1},1_H,u_{i+1}^n) \notin E_a$. This is impossible as $E_a$ is an  $n$-ary $Endo$-prime hyperideal   associated to $\theta$. Now, let $E_b \subseteq E_a$. In this case, we face the fact that $k(u_1^n) \in E_b$ and neither $u_i \in E_b$ nor $\theta(u_1^{i -1},1_H,u_{i +1}^n) \in E_b$. This is a contradiction since $E_b$ is an  $n$-ary $Endo$-prime hyperideal   associated to $\theta$. Thus, $\cap_{j \in \omega} E_j$ is an  $n$-ary $Endo$-prime hyperideal   associated to $\theta$.
\end{proof}
\begin{theorem}\label{2.01}
Let  $\theta : H \longrightarrow H$  be an   endomorphism and $E$ be an $n$-ary $Endo$-prime hyperideal of $H$ associated with $\theta$. Then $E$ contains $\theta(Q)$ for every minimal $n$-ary prime hyperideal $Q$ over $E$.
\end{theorem}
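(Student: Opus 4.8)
The plan is to reduce the containment $\theta(Q)\subseteq E$ to the pointwise claim that $\theta(u)\in E$ for every $u\in Q$, and to prove this last claim by a peeling argument driven by the characterization of $E$ in Theorem \ref{madar}(4). The first step is to record the standard element-wise description of minimal primes: for a minimal $n$-ary prime hyperideal $Q$ over $E$ and any $u\in Q$, there exist $s\in H\setminus Q$ and $r\ge 1$ with a product of $s$ and $r$ copies of $u$ lying in $E$. In the classical setting this is Kaplansky's criterion, and I would reprove it here by a Zorn's lemma / saturation argument: the complement $H\setminus Q$ is a maximal multiplicatively closed subset of $H$ disjoint from $E$; adjoining $u\in Q$ and closing under $k$ yields a strictly larger multiplicatively closed set, which by maximality must meet $E$, and its elements have the form $k_{(l)}(s,u^{(r)})$ with $s\in H\setminus Q$. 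Since the multiplicative part $(H,k)$ of a Krasner $(m,n)$-hyperring is an $n$-ary semigroup, $k$ is single valued, so all the products here are honest elements and no set-theoretic subtleties arise.

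The second step is to extract a directed tool from Theorem \ref{madar}(4). Writing an arbitrary $c\in H$ as $c=k(1_H,c,1_H^{(n-2)})$ exhibits $c$ in the form $k(u_1^{i-1},1_H,u_{i+1}^n)$, so Theorem \ref{madar}(4) yields the implication
\[(\ast)\qquad \theta(c)\notin E \ \Longrightarrow\ (E:c)=E ,\]
that is, $k(c,v,1_H^{(n-2)})\in E$ together with $\theta(c)\notin E$ forces $v\in E$. This is precisely the asymmetric (directed) statement that the bare definition of an $n$-ary $Endo$-prime hyperideal does not supply, since that definition only guarantees that \emph{some} coordinate $i$ works; it is this directed form that makes the peeling go through.

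The third step is the peeling itself. Write $P_j$ for a product of $s$ with $j$ copies of $u$, padded by copies of $1_H$ to reach the correct arity, so that $P_0=s$ and $P_r\in E$ by the first step. Fix $u\in Q$ and suppose, for contradiction, that $\theta(u)\notin E$. Using associativity of $k$ and the scalar identity I would rewrite $P_j=k(u,P_{j-1},1_H^{(n-2)})$, which says $P_{j-1}\in(E:u)$. Since $\theta(u)\notin E$, $(\ast)$ gives $(E:u)=E$, hence $P_{j-1}\in E$. Starting from $P_r\in E$ and iterating this reduction $r$ times yields $P_0=s\in E$. But $E\subseteq Q$ and $s\notin Q$, a contradiction. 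Therefore $\theta(u)\in E$, and as $u\in Q$ was arbitrary, $\theta(Q)\subseteq E$.

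The arity bookkeeping when passing between $P_j$ and $P_{j-1}$, and the reduction of the large-power case $r=l(n-1)+1$ to the small-power case, are routine and handled exactly as in the radical formula recalled in Section~2. The genuine obstacle is the first step: one must secure the element-wise characterization of minimal $n$-ary prime hyperideals in the Krasner $(m,n)$-hyperring setting, namely that for $u\in Q$ some product of $u$ with a factor outside $Q$ falls into $E$. Once that is in hand, $(\ast)$ makes the remainder essentially formal, and it is worth stressing that the whole argument hinges on replacing the symmetric defining property of $E$ by the directed consequence $(\ast)$, without which the product $k_{(l)}(s,u^{(r)})\in E$ would only return a disjunction over positions and could leave us with weaker conclusions such as $\theta^2(u)\in E$.
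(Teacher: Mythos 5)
Your proposal follows essentially the same route as the paper's proof: both reduce to the element-wise criterion that for $u$ in a minimal $n$-ary prime $Q$ over $E$ some product $k\bigl(k(u^{(r)},1_H^{(n-r)}),v,1_H^{(n-2)}\bigr)$ with $v\in H\setminus Q$ lies in $E$ (the paper cites Lemma 3.5 of \cite{mah2}, where you re-derive it by a Zorn/saturation argument), and then peel off copies of $u$ one at a time using $\theta(u)\notin E$ to conclude $v\in E\subseteq Q$, a contradiction. Your only refinement is to route the peeling step through the directed consequence $(E:u)=E$ of Theorem \ref{madar}(4) instead of quoting the disjunctive definition directly, which is, if anything, a more careful reading of the same argument.
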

\begin{proof}
Suppose that $E$ is an $n$-ary $Endo$-prime hyperideal of $H$ associated with $\theta$ and $Q$ is an $n$-ary prime hyperideal of $H$  that is minimal over $E$. We show that $E$ contains $\theta(Q)$.  Let us assume that $u \in Q$ but $\theta(u) \notin E$. Then there exists $v \in H \backslash Q$ and a nonnegative integer $r$ such that $k \big( k(u^{(r)},1_H^{(n-r)}),v,1_H^{(n-2)}) \in E$ for $r  \leq n$ or $k \big (k_{(l)}(u^{(r)}),v,1_H^{(n-2)} \big) \in E$ for $r=l(n-1)+1$ by Lemma 3.5 in \cite{mah2}. In the first case, since $E$ is an $n$-ary $Endo$-prime hyperideal of $H$ associated with $\theta$, $k \big(u,k(u^{(r-1)},v,1_H^{(n-r+1)}),1_H^{(n-2)} \big) \in E$ and $\theta(u) \notin E$, we conclude that $k(u^{(r-1)},v,1_H^{(n-r+1)}) \in E$ and by continuing the process, we get $k(u,v,1^{(n-1)}) \in E$. Since $\theta(u) \notin E$, we have $ v \in E \subseteq Q$ which is impossible. Hence, we get the result that $E$ contains $\theta(Q)$.  In the second case, the claim follows by using a similar argument to the previous part.
\end{proof}
As an immediate consequence of the previous theorem, we have the following result.

\begin{corollary}
Let  $\theta : H \longrightarrow H$  be an   endomorphism. If $E$ is   an $n$-ary $Endo$-prime hyperideal associated with $\theta$ and $Q$ is an $n$-ary prime hyperideal   of $H$ that is minimal over $E$, then $\theta(E) \subseteq \theta(Q) \subseteq E \subseteq Q$.
\end{corollary}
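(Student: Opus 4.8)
The plan is to read the full chain $\theta(E) \subseteq \theta(Q) \subseteq E \subseteq Q$ off the results already in hand, since the statement is advertised as an immediate consequence of Theorem \ref{2.01}. I would decompose the chain into its three constituent inclusions and justify each separately, working from right to left.

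First I would dispose of the rightmost inclusion $E \subseteq Q$: by hypothesis $Q$ is an $n$-ary prime hyperideal that is minimal over $E$, and being minimal \emph{over} $E$ presupposes $E \subseteq Q$, so this inclusion is simply part of the data. Next, the central inclusion $\theta(Q) \subseteq E$ is precisely the conclusion of Theorem \ref{2.01}, whose hypotheses---that $E$ is an $n$-ary $Endo$-prime hyperideal associated with $\theta$ and that $Q$ is a minimal $n$-ary prime hyperideal over $E$---coincide exactly with the hypotheses of the corollary; thus this step is a direct citation with nothing further to check.

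Finally, for the leftmost inclusion $\theta(E) \subseteq \theta(Q)$ I would apply the map $\theta$ to both sides of the inclusion $E \subseteq Q$ already established. Since the image of a subset under any map respects containment, $E \subseteq Q$ yields $\theta(E) \subseteq \theta(Q)$ at once. Concatenating the three inclusions then delivers the claimed chain.

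I do not anticipate any genuine obstacle, as the corollary is purely a bookkeeping consequence: one inclusion is a hypothesis, one is Theorem \ref{2.01} verbatim, and one is the monotonicity of $\theta$ with respect to inclusion. The only point meriting minor care is to confirm that Theorem \ref{2.01} is being invoked under exactly its stated hypotheses, which it is. As a consistency check one may also note that $\theta(E) \subseteq E$ is already guaranteed by Proposition \ref{2}, and the present chain refines this coarser statement by interposing $\theta(Q)$ between $\theta(E)$ and $E$.
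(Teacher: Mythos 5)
Your proposal is correct and matches the paper's intent: the corollary is stated there without proof as an immediate consequence of Theorem \ref{2.01}, and your decomposition ($E \subseteq Q$ from minimality, $\theta(Q) \subseteq E$ from the theorem, $\theta(E) \subseteq \theta(Q)$ from monotonicity of images) is exactly the intended argument.
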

\begin{theorem}
Let  $\theta : H \longrightarrow H$  be an   endomorphism, $E$   a hyperideal of $H$ and $\theta(Q)=Q$ for each $n$-ary prime hyperideal $Q$ of $H$. Then  $E$ is an $n$-ary $Endo$-prime hyperideal associated with $\theta$ if and only if $E$ is an $n$-ary prime hyperideal.
\end{theorem}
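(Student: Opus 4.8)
The plan is to establish the two implications separately, observing that the standing hypothesis $\theta(Q)=Q$ for every $n$-ary prime hyperideal $Q$ is needed only for the forward direction.

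For the reverse implication, suppose $E$ is an $n$-ary prime hyperideal. Then Remark \ref{haji} immediately gives that $E$ is an $n$-ary $Endo$-prime hyperideal associated with $\theta$, and no use of the assumption on prime hyperideals is required here.

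For the forward implication, suppose $E$ is an $n$-ary $Endo$-prime hyperideal associated with $\theta$. Since $E$ is by definition proper, it is contained in an $n$-ary prime hyperideal, and hence admits a minimal $n$-ary prime hyperideal $Q$ over $E$. The decisive step is to invoke Theorem \ref{2.01}, which yields $\theta(Q) \subseteq E$. Substituting the hypothesis $\theta(Q)=Q$ gives $Q \subseteq E$; combined with $E \subseteq Q$ (as $Q$ lies over $E$) this forces $E=Q$, so $E$ is an $n$-ary prime hyperideal. The same conclusion can be read off directly from the chain $\theta(E) \subseteq \theta(Q) \subseteq E \subseteq Q$ furnished by the corollary to Theorem \ref{2.01}, which collapses to $E=Q$ once $\theta(Q)=Q$ is inserted.

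The algebraic content is therefore essentially a one-line substitution once Theorem \ref{2.01} is in hand, so the point requiring the most care is the existence of a minimal $n$-ary prime hyperideal $Q$ over the proper hyperideal $E$. This rests on the facts that every proper hyperideal of $H$ is contained in some $n$-ary prime hyperideal, whence $rad(E) \neq H$, and that minimal primes exist by a Zorn's lemma argument using that the intersection of a descending chain of $n$-ary prime hyperideals is again $n$-ary prime. Once this existence is secured, the hypothesis $\theta(Q)=Q$ forces the $Endo$-prime condition to coincide with ordinary $n$-ary primeness, which is exactly what must be shown.
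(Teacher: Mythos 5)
Your proof is correct and follows essentially the same route as the paper: the forward direction applies Theorem \ref{2.01} to a minimal $n$-ary prime hyperideal $Q$ over $E$ to get $Q=\theta(Q)\subseteq E\subseteq Q$, hence $E=Q$, and the reverse direction is immediate from Remark \ref{haji}. Your additional care about the existence of a minimal prime over the proper hyperideal $E$ fills in a detail the paper leaves implicit, but does not change the argument.
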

\begin{proof}
$\Longrightarrow$ Let $E$ be an $n$-ary $Endo$-prime hyperideal associated with $\theta$. Therefore, we have $Q=\theta(Q) \subseteq E$ for each $n$-ary prime hyperideal of $H$ that is minimal over $E$ by Theorem \ref{2.01}. Hence, we get $E=Q$ which means $E$ is an $n$-ary prime hyperideal.

$\Longleftarrow$ It is obvious.
\end{proof}

\begin{proposition} \label{2.1}
Let $\theta : H \longrightarrow H$  be an   endomorphism. If $E$ is   an $n$-ary $Endo$-prime hyperideal associated with $\theta$, then  $\theta(rad(E)) \subseteq E$.
\end{proposition}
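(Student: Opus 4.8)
The plan is to reduce the containment $\theta(rad(E)) \subseteq E$ to a pointwise statement and then invoke the element-wise radical criterion already recorded in Proposition \ref{bagheri}. Concretely, I would fix an arbitrary $u \in rad(E)$ and show that $\theta(u) \in E$; since $u$ is arbitrary this yields $\theta(rad(E)) \subseteq E$.

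First I would unpack membership in the radical using its explicit description (Theorem 4.23 in \cite{sorc1}): $u \in rad(E)$ means there is some $r \in \mathbb{N}$ with either $k(u^{(r)},1_H^{(n-r)}) \in E$ when $r \le n$, or $k_{(l)}(u^{(r)}) \in E$ when $r > n$ and $r = l(n-1)+1$. This is precisely the hypothesis of Proposition \ref{bagheri}(3), which asserts that such a condition forces $\theta(u) \in E$. Applying it in whichever of the two cases occurs gives $\theta(u) \in E$, completing the argument.

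The point worth emphasising is that the substantive work is not in Proposition \ref{2.1} itself but in the element-wise statement Proposition \ref{bagheri}(3): there one peels off one factor at a time from $k(u^{(r)},1_H^{(n-r)})$ using the $Endo$-prime defining property, together with the identity $\theta(k(\ldots,1_H,\ldots)) = k(\theta(\cdot),\ldots,1_H,\ldots)$ and Proposition \ref{2} (which guarantees $\theta(E)\subseteq E$, so that the branch $u \in E$ already yields $\theta(u)\in E$). Since that lemma is available, Proposition \ref{2.1} is essentially a bookkeeping step that runs the same conclusion uniformly over all $u \in rad(E)$ and over both clauses of the radical formula.

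The main thing to be careful about is resisting the tempting but weaker route: combining Theorem \ref{1} (that $rad(E)$ is again $Endo$-prime associated with $\theta$) with Proposition \ref{2} only delivers $\theta(rad(E)) \subseteq rad(E)$, not the sharper containment $\theta(rad(E)) \subseteq E$ demanded here. So the proof genuinely needs the element-wise membership encoded in Proposition \ref{bagheri}(3) rather than the structural closure of the radical; this is the one place where a naive approach would fall short.
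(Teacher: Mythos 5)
Your proof is correct and follows essentially the same route as the paper's: both unwind $u \in rad(E)$ into the power condition $k(u^{(r)},1_H^{(n-r)}) \in E$ (resp. $k_{(l)}(u^{(r)}) \in E$) and then extract $\theta(u) \in E$ by peeling off a factor with the $Endo$-prime property, with the case $u \in E$ absorbed by Proposition \ref{2}. The only difference is presentational: you delegate the peeling step to the already-recorded Proposition \ref{bagheri}(3), whereas the paper redoes that argument inline using a minimal choice of $r$.
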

\begin{proof}
Let $u \in rad(E)$ such that $u \in E$. Then we get $\theta(u) \in E$ by Proposition \ref{2}. Now, suppose that  $u \notin E$. Since  $u \in rad(E)$, there exists a minimal element $r \in \mathbb{N}$ with $k(u^{(r)},1_H^{(n-r)}) \in E$ for $r \leq n$, or $k_{(l)}(u^{(r)}) \in E$ for $r=l(n-1)+1$. In the first case, we have $k \big(k(u^{(r-1)},1_H^{(n-r+1)}),u,1_H^{(n-2)}\big) \in E$ which means $\theta(u) \in E$ as $E$ is   an $n$-ary $Endo$-prime hyperideal associated with $\theta$ and $r$ is a minimal integer.  Thus, $\theta(rad(E)) \subseteq E$. In the second case, similarly, we can show that $E$ contained $\theta(rad(E))$.
\end{proof}
Recall from \cite{mah6} that an element $u \in H$ is nilpotent, if 
there exists $r \in \mathbb {N}$ with $k(u^ {(r)} , 1_H^{(n-r)} )=0$ for $r \leq n$, or $k_{(l)} (u^ {(r)} )$ for $r = l(n-1) + 1$.  The set of all nilpotent elements of $H$ is denoted by $\Upsilon_{\theta}(H)$. Since  $rad(0)$ is equal to  the set of all nilpotent elements of $H$ and it is in the radical of every hyperideal of $H$, we have the following result by Proposition \ref{2.1} .
\begin{corollary} \label{2.2}
If $\theta : H \longrightarrow H$  is an   endomorphism and $E$ is   an $n$-ary $Endo$-prime hyperideal associated with $\theta$, then $\theta(\Upsilon(H)) \subseteq E$. 
\end{corollary}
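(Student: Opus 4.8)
The plan is to reduce the statement entirely to Proposition \ref{2.1} by identifying the nilpotent elements with a radical. First I would invoke the remark made just before the statement: the set $\Upsilon(H)$ of all nilpotent elements of $H$ coincides with $rad(0)$, the radical of the zero hyperideal. This is immediate from the explicit description of $rad$ recalled in Section 2, since $u$ is nilpotent exactly when $k(u^{(r)},1_H^{(n-r)})=0$ (or $k_{(l)}(u^{(r)})=0$) for some $r$, which is precisely the membership condition $u \in rad(0)$.

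Next I would record the monotonicity inclusion $rad(0) \subseteq rad(E)$. Because $0$ lies in every hyperideal, we have $0 \subseteq E$; and since $rad(I)$ is defined as the intersection of all $n$-ary prime hyperideals of $H$ containing $I$, enlarging the hyperideal only shrinks the family of primes over which we intersect, so the intersection can only grow. Thus $rad(0) \subseteq rad(E)$, which is exactly the assertion in the text that $rad(0)$ lies in the radical of every hyperideal. With the identification of the previous paragraph this reads $\Upsilon(H) \subseteq rad(E)$.

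Finally I would transport this through $\theta$ and apply Proposition \ref{2.1}. Since the image of a map is monotone on subsets, $\Upsilon(H) = rad(0) \subseteq rad(E)$ yields $\theta(\Upsilon(H)) = \theta(rad(0)) \subseteq \theta(rad(E))$, and Proposition \ref{2.1} gives $\theta(rad(E)) \subseteq E$ because $E$ is an $n$-ary $Endo$-prime hyperideal associated with $\theta$. Chaining these inclusions produces $\theta(\Upsilon(H)) \subseteq \theta(rad(E)) \subseteq E$, as required. There is no genuine obstacle: the corollary is simply the specialization of Proposition \ref{2.1} to the smallest hyperideal $0$, and the only point deserving an explicit line is the monotonicity $I \subseteq J \Rightarrow rad(I) \subseteq rad(J)$, which I would state so that the final chain of inclusions is fully justified.
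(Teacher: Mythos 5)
Your proof is correct and follows exactly the route the paper intends: identify $\Upsilon(H)$ with $rad(0)$, use monotonicity of the radical to get $rad(0)\subseteq rad(E)$, and then apply Proposition \ref{2.1} to conclude $\theta(\Upsilon(H))\subseteq\theta(rad(E))\subseteq E$. Your write-up is in fact more explicit than the paper's one-line justification, but the argument is the same.
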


\begin{theorem}\label{3}
Assume that $\eta:H_1 \longrightarrow H_2$ is a homomorphism and $\theta \in End(H_1) \cap End(H_2)$ commutes with $\eta$. If the hyperideal $E_2$ of $H_2$ is an $n$-ary $Endo$-prime hyperideal associated with $\theta$, then the hyperideal $\eta^{-1}(E_2)$ of $H_1$ is an $n$-ary $Endo$-prime hyperideal associated with $\theta$.
\end{theorem}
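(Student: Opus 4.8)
The plan is first to settle the structural prerequisites and only then to push the defining implication through $\eta$. Since $\eta$ is a homomorphism, $\eta^{-1}(E_2)$ is a hyperideal of $H_1$; the one point worth recording separately is that it is \emph{proper}. Indeed, by axiom (iii) of a homomorphism $\eta(1_{H_1})=1_{H_2}$, so $1_{H_1}\in\eta^{-1}(E_2)$ would force $1_{H_2}\in E_2$, contradicting that $E_2$ is a proper hyperideal. I would establish this before invoking the $Endo$-prime hypothesis, since the definition requires a proper hyperideal.

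Next I would take $u_1^n\in H_1$ with $k(u_1^n)\in\eta^{-1}(E_2)$, that is $\eta(k(u_1^n))\in E_2$. Applying the multiplicative compatibility (ii) of $\eta$ gives $\eta(k(u_1^n))=k(\eta(u_1),\ldots,\eta(u_n))\in E_2$. Because $E_2$ is an $n$-ary $Endo$-prime hyperideal of $H_2$ associated with $\theta$, for some $i\in\{1,\ldots,n\}$ either $\eta(u_i)\in E_2$ or $\theta\big(k(\eta(u_1),\ldots,\eta(u_{i-1}),1_{H_2},\eta(u_{i+1}),\ldots,\eta(u_n))\big)\in E_2$.

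The first alternative is exactly $u_i\in\eta^{-1}(E_2)$, so that case is immediate. For the second alternative the one manipulation that does the real work is to rewrite $1_{H_2}=\eta(1_{H_1})$ and then use (ii) once more to absorb $\eta$ through $k$, so that the argument of $\theta$ becomes $\eta\big(k(u_1^{i-1},1_{H_1},u_{i+1}^n)\big)$. Here the hypothesis that $\theta$ commutes with $\eta$ is indispensable: it lets me exchange $\theta\circ\eta=\eta\circ\theta$ to obtain $\eta\big(\theta(k(u_1^{i-1},1_{H_1},u_{i+1}^n))\big)\in E_2$, i.e. $\theta\big(k(u_1^{i-1},1_{H_1},u_{i+1}^n)\big)\in\eta^{-1}(E_2)$. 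Combining the two cases yields precisely the $Endo$-prime condition for $\eta^{-1}(E_2)$.

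I expect no serious difficulty in the core implication; the demanding points are bookkeeping ones. I must keep straight that the single symbol $\theta$ denotes two endomorphisms, one on $H_1$ and one on $H_2$, intertwined by $\eta$, and I must call on each homomorphism axiom at the correct place — (ii) to transport $\eta$ across $k$ in both directions and (iii) to replace $1_{H_2}$ by $\eta(1_{H_1})$. The genuinely load-bearing hypothesis is the commutation $\theta\eta=\eta\theta$, without which the second alternative could not be pulled back to $H_1$; pinpointing exactly where it is used is the crux of the write-up. Since the statement involves only the single $n$-ary product $k$, no iterated powers or radicals intervene, so none of the extra tracking that appears in the earlier theorems is needed here.
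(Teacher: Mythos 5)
Your proposal is correct and follows essentially the same route as the paper's proof: push the product through $\eta$, invoke the $Endo$-prime property of $E_2$ in $H_2$, and pull both alternatives back to $H_1$ using the homomorphism axioms together with the intertwining relation $\theta\circ\eta=\eta\circ\theta$. The only difference is your explicit check that $\eta^{-1}(E_2)$ is proper, which the paper omits but which is a worthwhile addition.
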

\begin{proof}
Suppose that the hyperideal $E_2$ of $H_2$ is an $n$-ary $Endo$-prime hyperideal associated with $\theta$ and $k_1(u_1^n) \in \eta^{-1}(E_2)$ for $u_1^n \in H_1$. Therefore we obtain $k_2 \big( \eta(u_1),\cdots,\eta(u_n) \big)=\eta \big(k_1(u_1^n) \big) \in E_2$. Since the hyperideal $E_2$ of $H_2$ is an $n$-ary $Endo$-prime hyperideal associated with $\theta$, we conclude that $\eta(u_i) \in E_2$ for some $i \in \{1,\ldots,n\}$ or $\eta \big( \theta \big( k_1(u_1^{i-1},1_{H_1},u_{i+1}^n) \big) \big)=\theta \big( \eta \big( k_1(u_1^{i-1},1_{H_1},u_{i+1}^n) \big) \big)=\theta \big(k_2 \big( \eta(u_1),\cdots,\eta(u_{i-1}),1_{H_2},\eta(u_{i+1}),\cdots,\eta(u_n) \big)\big) \in E_2$. This implis that $u_i \in \eta^{-1}(E_2)$ or $\theta \big( k_1(u_1^{i-1},1_{H_1},u_{i+1}^n) \big) \in \eta^{-1}(E_2)$. Thus, the hyperideal $\eta^{-1}(E_2)$ of $H_1$ is an $n$-ary $Endo$-prime hyperideal associated with $\theta$.
\end{proof}
\begin{proposition}\label{4}
Let $\theta : H \longrightarrow H$ be an   endomorphism. Then $Ker \theta$ is in the intersection of all $n$-ary $Endo$-prime hyperideals associated with $\theta$.
\end{proposition}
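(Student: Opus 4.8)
The plan is to establish the pointwise containment $Ker\,\theta \subseteq E$ for an \emph{arbitrary} $n$-ary $Endo$-prime hyperideal $E$ associated with $\theta$; intersecting over all such $E$ then yields the assertion. So I fix one such $E$ and take $u \in Ker\,\theta$, that is $\theta(u)=0$. Since $0\in E$ we have $\theta(u)\in E$, and because $\theta$ is an endomorphism, for every $v\in H$ we also get $\theta\big(k(u,v,1_H^{(n-2)})\big)=k\big(\theta(u),\theta(v),1_H^{(n-2)}\big)=k\big(0,\theta(v),1_H^{(n-2)}\big)=0\in E$ by axiom (d). Thus $\theta$ sends the whole principal hyperideal $\langle u\rangle$ into $E$ (indeed onto $\{0\}$); the task is to upgrade this to the membership $u\in E$ itself.

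First I would split the argument via the characterization in Theorem \ref{madar}(2): applied to this particular $u$, either $\langle u\rangle\subseteq E$ or $\theta(E:u)\subseteq E$. In the first alternative we are finished immediately, since $u=k(u,1_H,1_H^{(n-2)})\in\langle u\rangle\subseteq E$. Hence the real substance of the proof is confined to the second alternative $\theta(E:u)\subseteq E$, in which $u\in E$ still has to be produced. Here I would try to exploit $\theta(u)=0$ together with the fact that $(E:u)$ is itself $Endo$-prime and proper (Proposition \ref{bagheri}(2)), and that $\theta(1_H)=1_H\notin E$ by properness of $E$, in order to force the defining implication to land on the membership branch $u\in E$ rather than on a $\theta$-branch.

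The step I expect to be the main obstacle is precisely this second case. Every genuine product involving $u$ that I can place inside $E$ is problematic: the factorization $k(u,1_H^{(n-1)})=u$ is circular, since its hypothesis ``$k(u,1_H^{(n-1)})\in E$'' \emph{is} the desired conclusion $u\in E$, while any factorization that forces a product into $E$ by inserting a $0$ (or another element of $\langle u\rangle$) causes the attached $\theta$-cofactor to collapse to $0\in E$. The $Endo$-prime axiom then risks being satisfied \emph{vacuously} through such a vanishing $\theta$-cofactor, without ever pinning $u$ into $E$. Overcoming this rests entirely on using properness of $E$ (so that the all-$1_H$ cofactor $\theta\big(k(1_H^{(n)})\big)=1_H$ remains outside $E$) to discard the automatically satisfied $\theta$-branches, combined with careful bookkeeping in the $n$-ary setting of which slot contributes which cofactor. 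Making that elimination airtight — i.e.\ showing the surviving alternative must be $u\in E$ — is the delicate heart of the argument, and is where I would focus the rigorous verification.
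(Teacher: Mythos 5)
Your reduction is candid about where it stands, but as written it does not prove the proposition: the whole of the second alternative of Theorem \ref{madar}(2) --- the case $\theta(E:u)\subseteq E$ with $\langle u\rangle\not\subseteq E$ --- is left open, and that case is not a verification to be tidied up later; it \emph{is} the statement. Moreover, the obstacle you flag is not merely delicate but fatal for this route. If $u\in Ker\,\theta$ and $u$ occupies slot $j$ in a factorization $k(u_1^n)\in E$, then for every index $i\neq j$ the cofactor $\theta\big(k(u_1^{i-1},1_H,u_{i+1}^n)\big)=k\big(\theta(u_1),\ldots,1_H,\ldots,\theta(u_n)\big)$ retains the factor $\theta(u)=0$ and hence equals $0\in E$ by axiom (d), so the defining implication of $Endo$-primeness is satisfied at that $i$ no matter what, exactly as you feared; no factorization can ever pin $u$ into $E$. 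In fact the pointwise containment $Ker\,\theta\subseteq E$ fails for a general $Endo$-prime $E$: take $H=\mathbb{Z}\times\mathbb{Z}$ as a Krasner $(2,2)$-hyperring, $\theta(x,y)=(x,x)$ and $E=\mathbb{Z}\times 2\mathbb{Z}$; then $E$ is $2$-ary prime, hence $Endo$-prime associated with $\theta$ by Remark \ref{haji}, while $(0,1)\in Ker\,\theta$ but $(0,1)\notin E$. So there is no way to make the elimination of the second branch ``airtight.''

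For comparison, the paper's own proof goes a different way: from $\theta(a)=0\in E$ it concludes $a\in\theta^{-1}(E)$ and invokes Theorem \ref{3} with $\eta=\theta$ to say that $\theta^{-1}(E)$ is again $Endo$-prime. But this only places $a$ in the preimage hyperideal $\theta^{-1}(E)$, not in $E$ itself; passing from ``$a$ lies in every hyperideal of the form $\theta^{-1}(E)$'' to ``$a$ lies in every $Endo$-prime hyperideal'' would require every $Endo$-prime hyperideal to arise as such a preimage, which is not shown. So your instinct that the entire difficulty is the step from $\theta(u)\in E$ to $u\in E$ is exactly right: your proposal isolates, rather than fills, the gap --- and the example above indicates that the gap cannot be filled for the statement in its present form.
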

\begin{proof}
Let $\theta : H \longrightarrow H$ be an   endomorphism and $a \in Ker \theta$. So, $\theta(a)=0$ which means $\theta(a)$ is in every $n$-ary $Endo$-prime hyperideal  associated with $\theta$. Since the inverse image of each    $n$-ary $Endo$-prime hyperideal associated with $\theta$ is also an $n$-ary $Endo$-prime hyperideal associated with $\theta$ by Theorem \ref{3}, we conclude that $a$ is in the intersection of all $n$-ary $Endo$-prime hyperideals associated with $\theta$. 
\end{proof}
Recall from \cite{sorc1} that a commutative Krasner $(m,n)$-hyperring $H$ is an $n$-ary hyperintegral domain if $k(u_1^n)=0$ for $u_1^n \in H$ implies that $u_i=0$ for some $i \in \{1,\ldots,n\}$.
We say that the element $u \in H$ is $\theta$-nilpotent, if 
there exists $r \in \mathbb {N}$ with $\theta \big( k(u^ {(r)} , 1_H^{(n-r)} ) \big)=0$ for $r \leq n$, or $\theta \big(k_{(l)} (u^ {(r)} ) \big)=0$ for $r = l(n-1) + 1$. The $\theta$-nilradical of $H$, denoted by $\Upsilon_{\theta}(H)$, is the set of all $\theta$-nilpotent elements of $H$.
\begin{theorem} \label{5}
Assume that $H$ is an $n$-ary hyperintegral domain.
Then $\Upsilon_{\theta}(H)$ is equal to  the intersection of all    $n$-ary $Endo$-prime hyperideals of $H$  associated with  $\theta$.
\end{theorem}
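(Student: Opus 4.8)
The plan is to prove the two inclusions separately, but the decisive first move is to rewrite $\Upsilon_{\theta}(H)$ in a form the earlier results can reach. Write $\mathcal{I}$ for the intersection of all $n$-ary $Endo$-prime hyperideals of $H$ associated with $\theta$. I would first exploit the hyperintegral domain hypothesis to show $\Upsilon_{\theta}(H)=Ker \theta$. Indeed, suppose $u$ is $\theta$-nilpotent with $\theta\big(k(u^{(r)},1_H^{(n-r)})\big)=0$ for some $r\leq n$; since $\theta$ is a homomorphism with $\theta(1_H)=1_H$, this equals $k(\theta(u)^{(r)},1_H^{(n-r)})=0$, and as the only factors are $\theta(u)$ and the nonzero scalar $1_H$, the no–zero–divisor property forces $\theta(u)=0$, i.e. $u\in Ker \theta$. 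Conversely $\theta(u)=0$ trivially makes $u$ $\theta$-nilpotent. Thus the theorem reduces to proving $Ker \theta=\mathcal{I}$.

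For the inclusion $\Upsilon_{\theta}(H)\subseteq\mathcal{I}$ I would simply invoke Proposition \ref{4}, which already places $Ker \theta$ inside every $n$-ary $Endo$-prime hyperideal associated with $\theta$, hence inside their intersection $\mathcal{I}$. In view of the reduction above, this disposes of one direction immediately.

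For the reverse inclusion $\mathcal{I}\subseteq\Upsilon_{\theta}(H)$ the key observation is that the zero hyperideal is itself a member of the intersected family. By the definition of an $n$-ary hyperintegral domain, $k(u_1^n)=0$ forces $u_i=0$ for some $i$, which by the elementwise characterization of $n$-ary prime hyperideals (Lemma 4.5 in \cite{sorc1}) says exactly that $\{0\}$ is an $n$-ary prime hyperideal; it is proper since $1_H\neq 0$. By Remark \ref{haji} every $n$-ary prime hyperideal is $n$-ary $Endo$-prime associated with $\theta$, so $\{0\}$ occurs in the intersection and therefore $\mathcal{I}\subseteq\{0\}\subseteq Ker \theta=\Upsilon_{\theta}(H)$, completing the argument.

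The hard part will not be the two inclusions, which are each short once $\Upsilon_{\theta}(H)=Ker \theta$ is established, but rather that reduction itself — specifically the case $r=l(n-1)+1$, where one must handle the iterated operation $k_{(l)}(u^{(r)})$. Here I would set up a short induction on $l$, peeling off the nested applications of $k$ one layer at a time and applying the no–zero–divisor hypothesis at each layer until reaching the base instance $k(\theta(u)^{(n)})=0$, which yields $\theta(u)=0$. I should also double-check that $\{0\}$ is genuinely a proper hyperideal of $H$ (so that $1_H\neq 0$ is needed, excluding the trivial hyperring). If one wished to avoid leaning on $\{0\}$ being prime — for instance toward a version without the domain hypothesis — the substantive alternative for the reverse inclusion would be a Zorn's lemma construction: given $u$ with $\theta(u)\neq 0$, take a hyperideal maximal with respect to excluding a suitable $\theta$-power of $u$ and verify it is $n$-ary $Endo$-prime; that construction, rather than the inclusions, is where the real difficulty would concentrate.
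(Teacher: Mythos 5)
Your argument is valid relative to the paper's stated results, but it takes a genuinely different --- and much shorter --- route for the hard inclusion. The forward direction is essentially the paper's: both arguments use the domain hypothesis to collapse $\Upsilon_{\theta}(H)$ to $Ker \theta$ (the paper does this by noting $Ker \theta$ is $n$-ary prime in a domain, you by pushing $\theta$ inside $k$ and cancelling), and both then invoke Proposition \ref{4} to place $Ker \theta$ in the intersection. For the reverse inclusion the paper runs a Zorn's lemma construction, producing a hyperideal maximal among those excluding the $\theta$-powers of $u$ and verifying it is $n$-ary $Endo$-prime; you instead observe that in an $n$-ary hyperintegral domain $\{0\}$ is itself $n$-ary prime by Lemma 4.5 of \cite{sorc1}, hence $n$-ary $Endo$-prime by Remark \ref{haji}, so the intersection $\mathcal{I}$ is already contained in $\{0\}$. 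That two-line observation is correct and makes the Zorn machinery superfluous under the domain hypothesis; as you note, the maximal-exclusion construction would only earn its keep in a version of the theorem without that hypothesis.

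Your streamlined proof, however, exposes something the paper's longer one conceals, and you should flag it rather than pass over it. Your chain reads $\Upsilon_{\theta}(H)=Ker \theta\subseteq\mathcal{I}\subseteq\{0\}$, which forces $Ker \theta=\{0\}$, i.e.\ $\theta$ injective --- a conclusion appearing nowhere in the hypotheses. This is not a defect in your logic but a sign that Proposition \ref{4} cannot hold in this generality: an ordinary integral domain is a Krasner $(2,2)$-hyperring, and for $H=\mathbb{Z}[x]$ with $\theta(p)=p(0)$ the zero ideal is prime, hence $Endo$-prime, yet fails to contain $Ker \theta=\langle x\rangle$; consequently $\mathcal{I}=\{0\}\neq\langle x\rangle=\Upsilon_{\theta}(H)$ and Theorem \ref{5} itself fails for non-injective $\theta$. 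Both your proof and the paper's inherit this from Proposition \ref{4}; yours merely makes the inconsistency visible, which is arguably a point in its favor.
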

\begin{proof}
Let $u \in \Upsilon_{\theta}(H)$. Then there exists $r \in \mathbb {N}$ with $\theta \big( k(u^ {(r)} , 1_H^{(n-r)} ) \big)=0$ for $r \leq n$, or $\theta \big(k_{(l)} (u^ {(r)} ) \big)=0$ for $r = l(n-1) + 1$. Hence $k(u^ {(r)} , 1_H^{(n-r)} ) \in Ker \theta$ or $ k_{(l)} (u^ {(r)} ) \in Ker \theta$. Since $Ker \theta$ is an $n$-ary prime hyperideal of $H$ by Proposition \ref{4}, we obtain $u \in Ker \theta$. This implies that $u$ is in the intersection of all    $n$-ary $Endo$-prime hyperideals of $H$  associated with  $\theta$ by Proposition \ref{3} and so  $\Upsilon_{\theta}(H)$ is in the intersection of all    $n$-ary $Endo$-prime hyperideals of $H$  associated with  $\theta$. Now, suppose that $u$ belongs to the intersection of all    $n$-ary $Endo$-prime hyperideals of $H$  associated with  $\theta$ but $u \notin \Upsilon_{\theta}(H)$. Let   $\mathcal{HI}(H)$ be the set of all hyperideals of $H$. Put
\[\Gamma= \biggm{\{} P \in \mathcal{HI}(H) \ \vert \   \biggm{\{} 
\begin{array}{lr}
k(u^{(r)},1_H^{(n-r)}) \notin P,& r \leq n\\
k_{(l)}(u^{(r)}) \notin P, & r>n, \ r=l(n-1)+1
\end{array}
\biggm{\}}  \biggm{\}}.\]
Then $\Gamma \neq \varnothing$ because $0 \in \Gamma$. So,
$\Gamma$ is a partially ordered set with
respect to set inclusion relation. Then, there is a hyperideal
$E$ which is maximal in $\Gamma$, by Zorn$^,$s lemma. We show that $E$ is an $n$-ary $Endo$-prime hyperideal of $H$ associated with $\theta$. Assume that $\theta(u_1),\ldots,\theta(u_n) \notin E$ for $u_1^n \in H$. Thus, we have $E \subseteq h(E,\langle \theta(u_i) \rangle, 0^{(m-2)})$ for every $i \in \{1,\ldots,n\}$. Since $h(E,\langle \theta(u_i) \rangle, 0^{(m-2)}) \notin \Gamma$ for every $i \in \{1,\ldots,n\}$, there exists $r_i \in \mathbb {N}$ with $\theta \big( k(u^ {(r_i)} , 1_H^{(n-r_i)} ) \big) \in h(E,\langle \theta(u_i) \rangle, 0^{(m-2)})$ for $r_i \leq n$, or $\theta \big(k_{(l_i)} (u^ {(r_i)} ) \big) \in h(E,\langle \theta(u_i) \rangle, 0^{(m-2)})$ for $r_i = l_i(n-1) + 1$. Put $r=r_1+\cdots+r_n$. Hence, we conclude that $\theta \big( k(u^ {(r)} , 1_H^{(n-r)} ) \big) \in h\big(E,\langle \theta(k(u_1^n)) \rangle, 0^{(m-2)}\big)$ for $r \leq n$, or $\theta \big(k_{(l)} (u^ {(r)} ) \big) \in h\big(E,\langle \theta(k(u_1^n)) \rangle, 0^{(m-2)}\big)$ for $r = l(n-1) + 1$. This means that $h\big(E,\langle \theta(k(u_1^n)) \rangle, 0^{(m-2)}\big) \notin \Gamma$ which implies $\theta(k(u_1^n))=k\big(\theta(u_1),\cdots,\theta(u_n)\big) \notin E$. Then $k(u_1^n) \notin E$ and $E$ is an $n$-ary $Endo$-prime hyperideal associated with $\theta$, by Proposition \ref{bagheri} (3). It follows that $u \notin E$ as $\theta\big(k(u^{(r)},1_H^{(n-r)}) \big)\notin E$ with  $r \leq n $ and $\theta \big(k_{(l)}(u^{(r)})\big) \notin E$ with $r>n, \ r=l(n-1)+1$. This is impossible. Thus, we conclude that $\Upsilon_{\theta}(H)$ is the intersection of all    $n$-ary $Endo$-prime hyperideals of $H$  associated with  $\theta$.
\end{proof}
For any given hyperideal $E$ of $H$, we define the $\theta$-radical of $E$ as following
\[rad_{\theta}(E)=\biggm{\{} u \in H \ \vert \   \biggm{\{} 
\begin{array}{lr}
\theta \big(k(u^{(r)},1_H^{(n-r)})\big) \in E,& r \leq n\\
\theta \big(k_{(l)}(u^{(r)})\big) \in E,  & r>n, \ r=l(n-1)+1
\end{array}
\biggm{\}}  \biggm{\}}.\]
It is clear that $\Upsilon_{\theta}(H)=rad_{\theta}(0)$.
\begin{corollary} \label{6}
Let $H$ be an $n$-ary hyperintegral domain.
Then $\theta$-radical of $E$ is the intersection of all    $n$-ary $Endo$-prime hyperideals of $H$  associated with  $\theta$ which contains $E$.
\end{corollary}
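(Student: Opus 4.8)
The plan is to treat this as the relative version of Theorem \ref{5}: taking $E=0$ recovers $\Upsilon_{\theta}(H)=rad_{\theta}(0)$ together with the intersection of \emph{all} $n$-ary $Endo$-prime hyperideals, so the corollary should generalize that theorem from $0$ to an arbitrary hyperideal $E$. The first step I would record is the reformulation
\[
rad_{\theta}(E)=\theta^{-1}(rad(E))=rad(\theta^{-1}(E)),
\]
which holds because $\theta$ is an endomorphism with $\theta(1_H)=1_H$, so that $\theta\big(k(u^{(r)},1_H^{(n-r)})\big)=k(\theta(u)^{(r)},1_H^{(n-r)})$ and likewise for $k_{(l)}$; hence $u\in rad_{\theta}(E)$ if and only if $\theta(u)\in rad(E)$. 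I would then prove the two inclusions separately.

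For the inclusion $\bigcap\{P \text{ $Endo$-prime}, E\subseteq P\}\subseteq rad_{\theta}(E)$ I would mirror the Zorn argument of Theorem \ref{5} verbatim, with $E$ in the role of $0$. Arguing by contraposition, suppose $u\notin rad_{\theta}(E)$ and let $\Gamma_E$ be the set of hyperideals $P\in\mathcal{HI}(H)$ with $E\subseteq P$ that avoid every element $\theta\big(k(u^{(r)},1_H^{(n-r)})\big)$ (for $r\le n$) and $\theta\big(k_{(l)}(u^{(r)})\big)$ (for $r=l(n-1)+1$). The assumption $u\notin rad_{\theta}(E)$ says precisely that $E\in\Gamma_E$, so $\Gamma_E\neq\varnothing$, and Zorn's lemma yields a maximal element $M$. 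That $M$ is $n$-ary $Endo$-prime follows from the same maximality computation as in Theorem \ref{5} (via Proposition \ref{bagheri}(3)), and $u\notin M$ because at $r=1$ we have $\theta(u)=\theta\big(k(u,1_H^{(n-1)})\big)\notin M$, while $u\in M$ would force $\theta(u)\in\theta(M)\subseteq M$ by Proposition \ref{2}. This produces an $n$-ary $Endo$-prime hyperideal containing $E$ but missing $u$, as required.

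For the reverse inclusion $rad_{\theta}(E)\subseteq P$ for each $n$-ary $Endo$-prime $P\supseteq E$, I would take $u\in rad_{\theta}(E)$, so $\theta(u)\in rad(E)$, choose a minimal $n$-ary prime $Q$ over $P$, and use that $rad(E)\subseteq Q$ together with $\theta(Q)\subseteq P$ (the corollary to Theorem \ref{2.01}); from $\theta(u)\in Q$ and the primeness of $Q$ I would try to descend to $u\in P$.

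The main obstacle is exactly this last descent. The available manipulations only deliver $\theta^{2}(u)\in P$ (equivalently $\theta(u)\in Q$), not $u\in P$, since stripping a $\theta$ off a membership is not reversible for a general endomorphism. In Theorem \ref{5} this difficulty does not arise: there $E=0$, the ideal $\theta^{-1}(0)=Ker\,\theta$ is itself $n$-ary prime (because $H$ is a hyperintegral domain) and lies inside every $Endo$-prime hyperideal (Proposition \ref{4}), so one extracts $u\in Ker\,\theta\subseteq P$ with no loss of a $\theta$. For a general $E$ the ideal $\theta^{-1}(E)$ that plays the analogous role is neither prime nor visibly contained in the $Endo$-primes over $E$, and a quotient reduction is not available either, since $H/E$ need not be a hyperintegral domain. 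I would therefore expect the crux of the argument to be the inclusion $rad_{\theta}(E)\subseteq\bigcap\{P\}$, and I would scrutinize whether $\theta^{-1}(E)\subseteq P$ for every $Endo$-prime $P\supseteq E$ (which, combined with the minimal primes over $P$, would close the gap), as this is the precise point where the passage from $0$ to an arbitrary $E$ becomes nontrivial.
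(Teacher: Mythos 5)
Your proposal does not reach a proof, and it also diverges from the paper's (much shorter) argument. The paper's entire proof is the quotient reduction you considered and set aside: apply Theorem \ref{5} to the Krasner $(m,n)$-hyperring $H/E$ with the induced endomorphism $\theta_E$, so that $\theta_E$-nilpotent elements of $H/E$ correspond to elements of $rad_{\theta}(E)$ and $n$-ary $Endo$-prime hyperideals of $H/E$ pull back, via Theorem \ref{10}, to $n$-ary $Endo$-prime hyperideals of $H$ containing $E$. Your objection to this route --- that Theorem \ref{5} is stated only for $n$-ary hyperintegral domains and $H/E$ need not be one --- is a legitimate criticism of the paper's own one-line proof, but flagging that obstacle does not supply an alternative argument, and the substitute you sketch is explicitly incomplete.

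Concretely, the gap is the inclusion $rad_{\theta}(E)\subseteq P$ for every $n$-ary $Endo$-prime hyperideal $P$ containing $E$. Starting from $u\in rad_{\theta}(E)$, i.e. $\theta(u)\in rad(E)\subseteq rad(P)$, the tools available (Proposition \ref{2.1}, Theorem \ref{2.01} and its corollary) only yield $\theta(rad(P))\subseteq P$ and hence $\theta^{2}(u)\in P$; as you say, there is no way to strip a $\theta$ off a membership for a general endomorphism, and your suggested patch ($\theta^{-1}(E)\subseteq P$ for every such $P$) is left unverified --- indeed Proposition \ref{bagheri}(1) gives that $\theta^{-1}(E)$ is $Endo$-prime only when $E$ itself is, which is not assumed here. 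Your other half (the Zorn construction of a maximal element of $\Gamma_E$ avoiding the $\theta$-powers of $u$, yielding an $Endo$-prime hyperideal over $E$ missing $u$) does transplant the argument of Theorem \ref{5} correctly, but by itself it proves only that the intersection is contained in $rad_{\theta}(E)$. As written, the proposal is a careful diagnosis of why the corollary is delicate rather than a proof of it; to close it you would either have to justify the paper's quotient reduction (e.g. by reproving Theorem \ref{5} relative to $E$ without the hyperintegral-domain hypothesis on the quotient) or establish the missing inclusion directly.
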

\begin{proof}
It follows by applying Theorem \ref{5} to the quotient Krasner $(m, n)$-hyperring $H/E$.
\end{proof}
Let $E$ be a proper hyperideal of $H$ and $\theta : H \longrightarrow H$ be an   endomorphism. We say that $E$ is an $n$-ary strongly $Endo$-prime hyperideal associated with $\theta$, if for hyperideals $U_1^n$ of $H$, $k(U_1^n) \subseteq E$ implies that $U_i \in E$ or $\theta\big(k(U_ 1^{i-1},1_H,U_{i+1}^n)\big) \subseteq E$ for some $i \in \{1,\ldots,n\}$. By the deﬁnition, it is concluded that every $n$-ary strongly $Endo$-prime hyperideal associated with $\theta$ is an $n$-ary  $Endo$-prime hyperideal associated with $\theta$. Moreover, the Krasner $(m,n)$-hyperring $H$ refers to an $n$-ary $\theta$-hyperintegral domain if $0$ is an $n$-ary $Endo$-prime hyperideal associated to $\theta$. Also, it is easy to check that $\theta_E: H/E \longrightarrow H/E$, defined by $ h(u,E,0^{(m-2)}) \mapsto h(\theta(u),E,0^{(m-2)})$, is a well-defined endomorphism on $H/E$.
\begin{theorem}\label{7}
Let $\theta : H \longrightarrow H$ be an   endomorphism and $E$ be a proper hyperideal of $H$. Then Then, the following statements hold:
\begin{itemize} 
\item[\rm{(1)}]~ If $E$ is an $n$-ary strongly $Endo$-prime hyperideal associated with $\theta$, then  $H/E$ is an $n$-ary $\theta_E$-hyperintegral domain.
\item[\rm{(2)}]~ If $H/E$ is an $n$-ary $\theta_E$-hyperintegral domain, then $E$ is an $n$-ary   $Endo$-prime hyperideal associated with $\theta$.
\end{itemize}
\end{theorem}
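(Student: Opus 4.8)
The plan is to set up a precise dictionary between statements about $E$ inside $H$ and statements about the zero hyperideal $\bar 0 = h(0,E,0^{(m-2)})$ inside the quotient $H/E$, and then read each implication off this dictionary. The three translation rules I will use are, for $u \in H$ and $u_1^n \in H$: (i) $\bar u = \bar 0$ in $H/E$ if and only if $u \in E$; (ii) the induced $n$-ary operation satisfies $\bar k(\bar u_1,\ldots,\bar u_n) = \overline{k(u_1^n)}$, so that $\bar k(\bar u_1^n) = \bar 0$ iff $k(u_1^n) \in E$; and (iii) $\theta_E(\bar u) = \overline{\theta(u)}$, so that $\theta_E\big(\bar k(\bar u_1^{i-1},\bar 1,\bar u_{i+1}^n)\big) = \bar 0$ iff $\theta\big(k(u_1^{i-1},1_H,u_{i+1}^n)\big) \in E$. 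Rule (i) follows from the canonical $m$-ary hypergroup axioms, rule (ii) from distributivity (axiom (c)) together with $E$ being a hyperideal, and rule (iii) is exactly the well-definedness of $\theta_E$ already recorded before the statement. Since $E$ is proper, $H/E$ is nontrivial, so $\bar 0$ is a proper hyperideal of $H/E$.

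For part (2), suppose $H/E$ is an $n$-ary $\theta_E$-hyperintegral domain, i.e.\ $\bar 0$ is an $n$-ary $Endo$-prime hyperideal of $H/E$ associated with $\theta_E$. Given $u_1^n \in H$ with $k(u_1^n) \in E$, rule (ii) reads this as $\bar k(\bar u_1^n) = \bar 0$, so the $Endo$-prime property of $\bar 0$ yields, for some $i$, either $\bar u_i = \bar 0$ or $\theta_E\big(\bar k(\bar u_1^{i-1},\bar 1,\bar u_{i+1}^n)\big) = \bar 0$. Translating back through rules (i) and (iii) gives $u_i \in E$ or $\theta\big(k(u_1^{i-1},1_H,u_{i+1}^n)\big) \in E$, which is exactly the $Endo$-prime condition for $E$.

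For part (1), suppose $E$ is $n$-ary strongly $Endo$-prime. As noted after its definition, this forces $E$ to be $n$-ary $Endo$-prime associated with $\theta$: applying the hyperideal condition to the principal hyperideals $\langle u_j\rangle$ and using $k(\langle u_1\rangle,\ldots,\langle u_n\rangle) \subseteq \langle k(u_1^n)\rangle \subseteq E$ whenever $k(u_1^n) \in E$ returns the element-wise condition. Now take any cosets $\bar u_1^n \in H/E$ with $\bar k(\bar u_1^n) = \bar 0$; choosing representatives and applying rule (ii) gives $k(u_1^n) \in E$, so the $Endo$-prime property of $E$ produces $u_i \in E$ or $\theta\big(k(u_1^{i-1},1_H,u_{i+1}^n)\big) \in E$ for some $i$. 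Rules (i) and (iii) translate this into $\bar u_i = \bar 0$ or $\theta_E\big(\bar k(\bar u_1^{i-1},\bar 1,\bar u_{i+1}^n)\big) = \bar 0$, so $\bar 0$ is $Endo$-prime in $H/E$, i.e.\ $H/E$ is a $\theta_E$-hyperintegral domain.

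The only genuinely technical point, and the step I expect to demand the most care, is the verification of the dictionary itself, especially rule (ii): one must check that the class of $k(u_1^n)$ is independent of the chosen representatives and coincides with $\bar k(\bar u_1^n)$, which is precisely where distributivity and the hyperideal property of $E$ enter. Once the dictionary is in place, both implications are purely formal. I would also flag that part (1) does not actually use the full strength of ``strongly'': the recorded implication (strongly $Endo$-prime $\Rightarrow$ $Endo$-prime) reduces it to the very same translation carried out in part (2), so the two parts together amount to an equivalence between $E$ being $n$-ary $Endo$-prime and $H/E$ being an $n$-ary $\theta_E$-hyperintegral domain.
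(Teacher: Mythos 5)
Your proof is correct, and part (2) is essentially the paper's argument: both translate $k(u_1^n)\in E$ into the vanishing of the product of the cosets $h(u_i,E,0^{(m-2)})$ in $H/E$, invoke the $Endo$-primeness of the zero hyperideal of $H/E$, and translate back via $\theta_E\big(h(u,E,0^{(m-2)})\big)=h(\theta(u),E,0^{(m-2)})$. Part (1) is where you genuinely diverge. The paper works directly with arbitrary cosets $h(u_{i1}^{i(j-1)},E,u_{i(j+1)}^{im})$, rewrites the hypothesis as a containment $k\big(h(u_{11}^{1(j-1)},0,u_{1(j+1)}^{1m}),\ldots\big)\subseteq E$, and applies the \emph{strongly} $Endo$-prime condition to the sets $h(u_{i1}^{i(j-1)},0,u_{i(j+1)}^{im})$ themselves; this forces heavy double-indexed bookkeeping and, strictly speaking, applies a condition quantified over hyperideals to subsets that need not be hyperideals. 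You instead first collapse ``strongly $Endo$-prime'' to the element-wise $Endo$-prime condition (the implication the paper records, without proof, immediately after the definition --- your justification via the principal hyperideals $\langle u_j\rangle$ is the right one, provided you note that $k(u_1^{i-1},1_H,u_{i+1}^n)\in k(\langle u_1\rangle,\ldots,1_H,\ldots,\langle u_n\rangle)$, so the set containment yields the required element membership) and then run the same coset dictionary as in part (2) in the opposite direction. This buys a shorter, cleaner argument and makes explicit that the theorem is really an equivalence between $E$ being $n$-ary $Endo$-prime and $H/E$ being a $\theta_E$-hyperintegral domain, with ``strongly'' entering only through that recorded implication. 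The one point you should not leave implicit is that every element of $H/E$ can be written as $h(u,E,0^{(m-2)})$ for a single representative $u$ (a consequence of $E$ being an $m$-ary subhypergroup of the canonical hypergroup $(H,h)$); this is what lets your one-variable dictionary cover all cosets, and it is precisely the reduction the paper's heavier notation is designed to avoid assuming.
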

\begin{proof}
(1)  Assume that $E$ is an $n$-ary strongly $Endo$-prime hyperideal associated with $\theta$ and $h(u_{11}^{1(j-1)},E,u_{1(j+1)}^{1m}),\ldots, h(u_{n1}^{n(j-1)},E,u_{n(i+1)}^{nm}) \in H/E$ for $u_{11}^{1m},\ldots,u_{n1}^{nm} \in H$ such that $k \big(h(u_{11}^{1(j-1)},E,u_{1(j+1)}^{1m}),\ldots, h(u_{n1}^{n(j-1)},E,u_{n(j+1)}^{nm}) \big)=E$. This implies that $h \big((k(u_{11}^{n1}),\ldots,k(u_{1(j-1)}^{n(j-1)}),E,k(u_{1(j+1)}^{n(j+1)}),\ldots,k(u_{1m}^{nm}) \big)=E$. Therefore we conclude that 
$h \big((k(u_{11}^{n1}),\ldots,k(u_{1(j-1)}^{n(j-1)}),0,k(u_{1(j+1)}^{n(j+1)}),\ldots,k(u_{1m}^{nm}) \big) \subseteq E$ which means  $k(\big(h(u_{11}^{1(j-1)},0,u_{1(j+1)}^{1m}),\ldots, h(u_{n1}^{n(j-1)},0,u_{n(j+1)}^{nm}) \big) \subseteq E$. Since $E$ is an $n$-ary strongly $Endo$-prime hyperideal associated with $\theta$, we get $h(u_{i1}^{i(j-1)},0,u_{i(j+1)}^{im}) \subseteq E$ or

$\theta \big( k\big(h(u_{11}^{1(j-1)},0,u_{1(j+1)}^{1m}),\ldots,h(u_{(i-1)1}^{(i-1)(j-1)},0,u_{(i-1)(j+1)}^{(i-1)m}),h(1_H,0^{(m-1)}),$

$ \hspace{1cm} h(u_{(i+1)1}^{(i+1)(j-1)},0,u_{(i+1)(j+1)}^{(i+1)m}),\ldots, h(u_{n1}^{n(j-1)},0,u_{n(j+1)}^{nm}) \big) \subseteq E$\\
 for some $i \in \{1,\ldots,n\}$.  This implies that  $ h(u_{i1}^{i(j-1)},E,u_{i(j+1)}^{im}) = E$ or 
 
 $\theta_E \big( k (\big(h(u_{11}^{1(j-1)},E,u_{1(j+1)}^{1m}),\ldots,h(u_{(i-1)1}^{(i-1)(j-1)},E,u_{(i-1)(j+1)}^{(i-1)m}),h(1_H,E,0^{(m-2)}),$
 
 $ \hspace{1cm} h(u_{(i+1)1}^{(i+1)(j-1)},E,u_{(i+1)(j+1)}^{(i+1)m}),\ldots, h(u_{n1}^{n(j-1)},E,u_{n(j+1)}^{nm}) \big) \big)$

 $=\theta_E \big( h \big( k(\big(h(u_{11}^{1(j-1)},0,u_{1(j+1)}^{1m}),\ldots,h(u_{(i-1)1}^{(i-1)(j-1)},0,u_{(i-1)(j+1)}^{(i-1)m}),h(1_H,0^{(m-1)}),$
 
 $ \hspace{1cm} h(u_{(i+1)1}^{(i+1)(j-1)},0,u_{(i+1)(j+1)}^{(i+1)m}),\ldots, h(u_{n1}^{n(j-1)},0,u_{n(j+1)}^{nm}) \big),E,k(0^{(n)})^{(m-2)} \big)\big) \big)$
 
 $\hspace{0.3cm}=h \big( \theta \big( k\big(h(u_{11}^{1(j-1)},0,u_{1(j+1)}^{1m}),\ldots,h(u_{(i-1)1}^{(i-1)(j-1)},0,u_{(i-1)(j+1)}^{(i-1)m}),h(1_H,0^{(m-1)}),$
 
 $\hspace{1cm}  h(u_{(i+1)1}^{(i+1)(j-1)},0,u_{(i+1)(j+1)}^{(i+1)m}),\ldots, h(u_{n1}^{n(j-1)},0,u_{n(j+1)}^{nm}) \big),E,k(0^{(n)})^{(m-2)} \big)$
 
 $\hspace{0.3cm}= E. $\\
 Consequently, $H/E$ is an $n$-ary $\theta_E$-hyperintegral domain.
 
(2) Let $k(u_1^n) \in E$ for $u_1^n \in H$. Then $h(k(u_1^n),E,0^{(m-2)})=E$ by Lemma 3.4 in \cite{d1}. Therefore, we conclude that  $k(h(u_1,E,0^{(m-2)}),\ldots,h(u_n,E,0^{(m-2)}) \big)=h \big(k(u_1^n),k(0^{(n)}),\ldots,k(0^{(n)}),E,k(0^{(n)}),\ldots,k(0^{(n)})\big)=E$. Since $H/E$ is an $n$-ary $\theta_E$-hyperintegral domain, we obtain $h(u_i,E,0^{(m-2)}) =  E$ or

 $\hspace{1.8cm}h \big(\theta \big( k(u_1^{i-1},1_H,u_{i+1}^n) \big),E,0^{(m-2)} \big)$
 
 $\hspace{2cm} =\theta_E \big( h \big(k(u_1^{i-1},1_H,u_{i+1}^n),E,0^{(m-2)} \big) \big)$

 $\hspace{2cm} =\theta_E \big( k \big(h(u_1,E,0^{(m-2)}),\ldots,h(u_{i-1},E,0^{(m-2)}),h(1_H,E,0^{(m-2)}),$
 
 $\hspace{2.8cm}h(u_{i+1},E,0^{(m-2)}),\ldots,h(u_n,E,0^{(m-2)}) \big)\big)$
 
 $\hspace{2cm} =E.$

 This implies that $u_i \in E$ or $\theta \big( k(u_1^{i-1},1_H,u_{i+1}^n) \big) \in E$ and this is what we want to prove.
 \end{proof}


\begin{theorem}\label{10}
Let $\eta:H_1 \longrightarrow H_2$ be a homomorphism and  $\theta_i: H_i \longrightarrow H_i$ be  an endomorphism for every $i \in \{1,2\}$ such that $\theta_2 \circ \eta =\eta \circ \theta_1$. Then the following  holds:

\begin{itemize} 
\item[\rm{(1)}]~ If $E_2$ is an $n$-ary $Endo$-prime hyperideal of $H_2$ associated with $\theta_2$, then $\eta^{-1}(E_2)$  is an $n$-ary $Endo$-prime hyperideal of $H_1$ associated with $\theta_1$.
\item[\rm{(2)}]~If $\eta$ is an epimorphism 
 and  $E_1$ is an $n$-ary $Endo$-prime hyperideal of $H_1$ associated with $\theta_1$ such that $Ker \eta \subseteq E_1$, then $\eta(E_1)$ is an $n$-ary $Endo$-prime hyperideal of $H_2$ associated with $\theta_2$.
\end{itemize} 
\end{theorem}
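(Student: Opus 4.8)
The plan is to handle the two parts separately, with part (1) being a direct adaptation of Theorem \ref{3} and part (2) requiring an additional surjectivity-plus-kernel argument.

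For part (1), I would mimic the proof of Theorem \ref{3} almost verbatim, the only change being that the single commuting endomorphism $\theta$ is replaced by the compatible pair $\theta_1,\theta_2$. Concretely, suppose $k_1(u_1^n) \in \eta^{-1}(E_2)$ for $u_1^n \in H_1$. Applying $\eta$ and using that it is a homomorphism gives $k_2(\eta(u_1),\ldots,\eta(u_n)) = \eta(k_1(u_1^n)) \in E_2$. Since $E_2$ is $n$-ary $Endo$-prime associated with $\theta_2$, either $\eta(u_i) \in E_2$ for some $i$, or $\theta_2(k_2(\eta(u_1),\ldots,\eta(u_{i-1}),1_{H_2},\eta(u_{i+1}),\ldots,\eta(u_n))) \in E_2$. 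In the first case $u_i \in \eta^{-1}(E_2)$. In the second, I would rewrite the argument of $\theta_2$ as $\eta(k_1(u_1^{i-1},1_{H_1},u_{i+1}^n))$ using $\eta(1_{H_1}) = 1_{H_2}$, and then the compatibility relation $\theta_2 \circ \eta = \eta \circ \theta_1$ turns $\theta_2(\eta(\cdot))$ into $\eta(\theta_1(\cdot))$, yielding $\theta_1(k_1(u_1^{i-1},1_{H_1},u_{i+1}^n)) \in \eta^{-1}(E_2)$. This is the defining property, so $\eta^{-1}(E_2)$ is $n$-ary $Endo$-prime associated with $\theta_1$.

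For part (2), I would first check that $\eta(E_1)$ is a proper hyperideal of $H_2$: it is a hyperideal because $\eta$ is an epimorphism, and it is proper because the hypothesis $Ker \eta \subseteq E_1$ yields the correspondence identity $\eta^{-1}(\eta(E_1)) = E_1$, so $\eta(E_1) = H_2$ would force $E_1 = H_1$, contradicting properness of $E_1$. For the defining property, take $v_1^n \in H_2$ with $k_2(v_1^n) \in \eta(E_1)$. Using surjectivity of $\eta$, choose preimages $u_i$ with $\eta(u_i) = v_i$. Then $\eta(k_1(u_1^n)) = k_2(v_1^n) \in \eta(E_1)$, and the kernel hypothesis, through $\eta^{-1}(\eta(E_1)) = E_1$, upgrades this to $k_1(u_1^n) \in E_1$. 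Applying the $Endo$-primeness of $E_1$ gives $u_i \in E_1$ or $\theta_1(k_1(u_1^{i-1},1_{H_1},u_{i+1}^n)) \in E_1$ for some $i$; pushing each alternative forward through $\eta$ and invoking the compatibility exactly as in part (1) yields $v_i \in \eta(E_1)$ or $\theta_2(k_2(v_1^{i-1},1_{H_2},v_{i+1}^n)) \in \eta(E_1)$, as required.

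The main obstacle is the transfer $k_1(u_1^n) \in E_1$ in part (2): surjectivity alone only gives $\eta(k_1(u_1^n)) \in \eta(E_1)$, and without the hypothesis $Ker \eta \subseteq E_1$ the preimage could differ from an element of $E_1$ by something in $Ker \eta$, so membership in $E_1$ would fail. I expect the identity $\eta^{-1}(\eta(E_1)) = E_1$, whose only input is $Ker \eta \subseteq E_1$ together with reversibility in the canonical hypergroup, to be the crux of the whole argument; once it is established, both the properness of $\eta(E_1)$ and the pullback of membership follow, and the remaining manipulations reduce to the same commuting-diagram computations used in part (1).
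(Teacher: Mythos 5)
Your proposal is correct and follows essentially the same route as the paper's proof: apply $\eta$ (or choose preimages), invoke the $Endo$-prime property, and use the commutation $\theta_2 \circ \eta = \eta \circ \theta_1$ to translate between $\theta_2$ and $\theta_1$; in part (2) the paper likewise passes from $\eta(k_1(u_1^n)) \in \eta(E_1)$ to $k_1(u_1^n) \in E_1$ via $Ker\,\eta \subseteq E_1$, which you make more explicit through the identity $\eta^{-1}(\eta(E_1)) = E_1$. Your additional check that $\eta(E_1)$ is a proper hyperideal is a detail the paper omits but is a welcome inclusion.
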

\begin{proof}
(1) Assume that $k_1(u_1^n) \in \eta^{-1}(E_2)$ for $u_1^n \in H_1$. Then, we have $\eta \big(k_1(u_1^n)\big) \in E_2$ and so $k_2 \big(\eta(u_1),\ldots,\eta(u_n)\big) \in E_2$. Since $E_2$ is an $n$-ary $Endo$-prime hyperideal of $H_2$ associated with $\theta_2$, we obtain $\eta(u_i) \in E_2$ or $\eta  \big( \theta_1 \big(k_1(u_1^{i-1},1_{H_1},u_{i+1}^n) \big) \big)=\theta_2 \big( \eta \big(k_1(u_1^{i-1},1_{H_1},u_{i+1}^n) \big) \big)=  \theta_2 \big(k_2 \big( \eta(u_1),\ldots,\eta(u_{i-1}),1_{H_2},\eta(u_{i+1}), \ldots,\eta(u_n)\big) \big) \in E_2$ for some $i \in \{1,\ldots,n\}$. Therefore, $u_i \in \eta^{-1}(E_2)$ or $ \theta_1 \big(k_1(u_1^{i-1},1_{H_1},u_{i+1}^n) \big) \in \eta^{-1}(E_2)$. Consequently, $\eta^{-1}(E_2)$  is an $n$-ary $Endo$-prime hyperideal of $H_1$ associated with $\theta_1$.

(2) Suppose that $k_2(v_1^n) \in \eta(E_1)$ for $v_1^n \in H_2$. By the hypothesis, there exist $u_1^n \in H_1$ such that $\eta(u_i)=v_i$ for every $i \in \{1,\ldots,n\}$. Hence, we get $\eta \big( k_1(u_1^n)\big)=k_2 \big(\eta(u_1),\ldots,\eta(u_n) \big)=k_2(v_1^n) \in \eta(E_1)$. From $Ker \eta \subseteq E_1$, it follows that $k_1(u_1^n) \in E_1$. Since $E_1$ is an $n$-ary $Endo$-prime hyperideal of $H_1$ associated with $\theta_1$, we conclude that $u_i \in E_1$ or $\theta_1 \big(k_1(u_1^{i-1},1_{H_1},u_{i+1}^n) \big) \in E_1$ for some $i \in \{1,\ldots,n\}$. This implies that $v_i \in \eta(E_1)$ or $\theta_2 \big(k_2(v_1^{i-1},1_{H_2},v_{i+1}^n \big) \big)=\theta_2 \big( \eta       \big(k_1(u_1^{i-1},1_{H_1},u_{i+1}^n) \big) \big) =\eta \big(\theta_1 \big(k_1(u_1^{i-1},1_{H_1},u_{i+1}^n) \big) \big) \in \eta(E_1)$. Thus, $\eta(E_1)$ is an $n$-ary $Endo$-prime hyperideal of $H_2$ associated with $\theta_2$.
\end{proof}
\begin{corollary} \label{11}
Let $\theta : H \longrightarrow H$ be an   endomorphism and $E$ be a hyperideal of $H$.
\begin{itemize} 
\item[\rm{(1)}]~ Let $G$ be a subhyperring of $H$ containing $\theta(G)$. If $E$ is an $n$-ary $Endo$-prime hyperideal of $H$ associated with $\theta$, then $E \cap G$ is an $n$-ary $Endo$-prime hyperideal of $G$ associated with $\theta \vert_{G}$.
\item[\rm{(2)}]~ Assume that $E$ contains $Ker \theta$. Then, $E$ is an $n$-ary $Endo$-prime hyperideal of $H$ associated with $\theta$ if and only if $E/Ker \theta$ is  an $n$-ary $Endo$-prime hyperideal of $H/Ker \theta$ associated with $\theta_{Ker \theta}$.
\end{itemize} 
\end{corollary}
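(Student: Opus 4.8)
The plan is to derive both parts from Theorem \ref{10} by choosing the appropriate homomorphism $\eta$ and verifying in each case the commutation hypothesis $\theta_2 \circ \eta = \eta \circ \theta_1$; no new combinatorics on the hyperoperations is needed.

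For (1), I would take $\eta$ to be the inclusion map $\iota : G \hookrightarrow H$, which is a homomorphism of Krasner $(m,n)$-hyperrings. The hypothesis $\theta(G) \subseteq G$ is exactly what guarantees that the restriction $\theta\vert_G$ is a well-defined endomorphism of $G$. Setting $\theta_1 = \theta\vert_G$ and $\theta_2 = \theta$, the required identity $\theta \circ \iota = \iota \circ \theta\vert_G$ holds trivially, since both sides send $g \in G$ to $\theta(g)$. As $\iota^{-1}(E) = E \cap G$, Theorem \ref{10}(1) then immediately yields that $E \cap G$ is an $n$-ary $Endo$-prime hyperideal of $G$ associated with $\theta\vert_G$.

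For (2), I would use the canonical projection $\pi : H \longrightarrow H/Ker\theta$, which is an epimorphism with $Ker\,\pi = Ker\theta$, together with the induced endomorphism $\theta_{Ker\theta}$ on the quotient (well-defined, as noted just before Theorem \ref{7}). The commutation identity $\theta_{Ker\theta} \circ \pi = \pi \circ \theta$ holds by the very definition of $\theta_{Ker\theta}$. For the forward implication, since $Ker\,\pi = Ker\theta \subseteq E$ by hypothesis, Theorem \ref{10}(2) gives that $\pi(E) = E/Ker\theta$ is an $n$-ary $Endo$-prime hyperideal of $H/Ker\theta$ associated with $\theta_{Ker\theta}$. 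For the converse, I would instead apply Theorem \ref{10}(1) to $\pi$: the preimage $\pi^{-1}(E/Ker\theta)$ is an $n$-ary $Endo$-prime hyperideal associated with $\theta$, and because $Ker\theta \subseteq E$ this preimage coincides with $E$ itself.

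The argument is largely bookkeeping; the only points requiring genuine care are the two identifications $\iota^{-1}(E) = E \cap G$ and $\pi^{-1}(E/Ker\theta) = E$ (the latter relying essentially on $Ker\theta \subseteq E$), and confirming that $\theta\vert_G$ and $\theta_{Ker\theta}$ satisfy the hypotheses of Theorem \ref{10}. I expect the main, and still minor, obstacle to be verifying that $\theta\vert_G$ genuinely maps into $G$, which is precisely where the assumption $\theta(G) \subseteq G$ in (1) enters; everything else follows by a direct appeal to Theorem \ref{10}.
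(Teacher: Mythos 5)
Your proposal is correct and follows essentially the same route as the paper: both parts are obtained by specializing Theorem \ref{10} to the inclusion $G \hookrightarrow H$ and to the canonical projection $H \longrightarrow H/\mathrm{Ker}\,\theta$, respectively, with the same choices of $\theta_1$ and $\theta_2$. Your write-up is in fact more careful than the paper's, since you explicitly verify the commutation hypothesis and the identifications $\iota^{-1}(E)=E\cap G$ and $\pi^{-1}(E/\mathrm{Ker}\,\theta)=E$.
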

\begin{proof}
(1) It is sufficient to put $H_1=G$, $H_2=H$, $\theta_1=\theta \vert_{G}$, $\theta_2=\theta$ and consider $\eta$ as an inclusion homomorphism in Theorem \ref{10} (1).

(2) Assume that $E$ contains $Ker \theta$. The mapping $\pi :H \longrightarrow H/Ker \theta$, defined by $u \mapsto f(u,ker \theta,0^{(m-2)})$,  is an epimorphism by Theorem 3.2 in \cite{sorc1}. Now, put $H_1=H$, $H_2=H/Ker \theta$, $\theta_1=\theta$, $\theta_2=\theta_{Ker \theta}$ and $\eta=\pi$ in Theorem \ref{10}.
\end{proof}
A non-empty subset $S$  of $H$ is called $n$-ary multiplicative if $k(s_1^n) \in S$ for every $s_1^n \in S$ \cite{sorc1}. The notion of Krasner $(m,n)$-hyperring of fractions was   investigated in \cite{mah5}. Let $\theta : H \longrightarrow H$ be an   endomorphism. Define the mappings $\bar{\theta}: S^{-1}H \longrightarrow S^{-1}H$ by $\frac{u}{s} \mapsto \frac{\theta(u)}{\theta(s)}$  and $\bar{\bar{\theta}} :H \longrightarrow S^{-1}H$ by $u \mapsto \frac{u}{1}$ where $S \subseteq H$  is  an $n$-ary multiplicative set   containing $\theta(S)$ and $1_H$. A  routine check verifies that $\bar{\theta}$ and  $\bar{\bar{\theta}}$ are   homomorphisms of Krasner $(m,n)$-hyperring.
\begin{theorem} \label{8}
Let $\theta : H \longrightarrow H$ be an   endomorphism and $S \subseteq H$  be an $n$-ary multiplicative set containing $\theta(S)$ and $1_H$. Then, the following
 holds:
\begin{itemize} 
\item[\rm{(1)}]~
Let  $E$ be a hyperideal of $H$ with  $E \cap S=\varnothing$. If $E$ is an $n$-ary $Endo$-prime hyperideal of $H$ associated with $\theta$, then  $S^{-1}E$ an $n$-ary $Endo$-prime hyperideal of $S^{-1}H$ associated with $\bar{\theta}$.
\item[\rm{(2)}]~ If $F$ is   an $n$-ary $Endo$-prime hyperideal of $S^{-1}H$ associated with $\bar{\theta}$, then ${\bar{\bar{\theta}}}^{-1}(F)$ is an $n$-ary $Endo$-prime hyperideal of $ H$ associated with $\theta$.
\end{itemize} 
\end{theorem}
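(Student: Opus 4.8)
For part (2) the plan is to reduce everything to the transfer machinery already established. I would invoke Theorem \ref{10}(1) with $H_1=H$, $H_2=S^{-1}H$, $\eta=\bar{\bar{\theta}}$, $\theta_1=\theta$, $\theta_2=\bar{\theta}$ and $E_2=F$. The only hypothesis needing checking is the commuting square $\bar{\theta}\circ\bar{\bar{\theta}}=\bar{\bar{\theta}}\circ\theta$, which holds because for every $u\in H$ one has $\bar{\theta}(\bar{\bar{\theta}}(u))=\bar{\theta}(\tfrac{u}{1})=\tfrac{\theta(u)}{\theta(1_H)}=\tfrac{\theta(u)}{1}$, using $\theta(1_H)=1_H$, while $\bar{\bar{\theta}}(\theta(u))=\tfrac{\theta(u)}{1}$. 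Granting this, Theorem \ref{10}(1) yields at once that $\bar{\bar{\theta}}^{-1}(F)$ is an $n$-ary $Endo$-prime hyperideal of $H$ associated with $\theta$, so (2) costs essentially nothing beyond verifying the identity of the two maps.

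Part (1) requires a hands-on argument in the localization. I would start with $u_1^n\in H$ and $s_1^n\in S$ such that $k(\tfrac{u_1}{s_1},\ldots,\tfrac{u_n}{s_n})\in S^{-1}E$. Since this $n$-ary product is $\tfrac{k(u_1^n)}{k(s_1^n)}$, the description of $S^{-1}E$ coming from the construction in \cite{mah5} provides $t\in S$ with $k\big(t,k(u_1^n),1_H^{(n-2)}\big)\in E$. By associativity this is $k\big(k(t,u_1,1_H^{(n-2)}),u_2^n\big)\in E$, so the $Endo$-primeness of $E$ applies to the $n$-tuple whose first entry is $k(t,u_1,1_H^{(n-2)})$ and whose remaining entries are $u_2,\ldots,u_n$. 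This produces the familiar alternatives: either the first factor lies in $E$, or some $u_i\in E$ with $i\ge 2$, or $\theta$ of the product with $1_H$ inserted at some slot $i$ lies in $E$.

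The remaining work is to push each case back to $S^{-1}H$. If $k(t,u_1,1_H^{(n-2)})\in E$, then $\tfrac{u_1}{s_1}=\tfrac{k(t,u_1,1_H^{(n-2)})}{k(t,s_1,1_H^{(n-2)})}\in S^{-1}E$ since $S$ is multiplicative; if $u_i\in E$ for some $i\ge2$, then $\tfrac{u_i}{s_i}\in S^{-1}E$. For the $\theta$-case at slot $i=1$ the inserted $1_H$ absorbs the whole first factor, so the conclusion reads $\theta\big(k(1_H,u_2^n)\big)\in E$, and then $\bar{\theta}\big(k(\tfrac{1_H}{1_H},\tfrac{u_2}{s_2},\ldots,\tfrac{u_n}{s_n})\big)=\tfrac{\theta(k(1_H,u_2^n))}{\theta(k(1_H,s_2^n))}\in S^{-1}E$, the denominator lying in $S$ because $\theta(S)\subseteq S$. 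For a slot $i\ge 2$, expanding $\theta$ as a homomorphism and using $\theta(1_H)=1_H$ rewrites the hypothesis as $k\big(\theta(t),\theta(k(u_1,\ldots,1_H,\ldots,u_n)),1_H^{(n-2)}\big)\in E$, with $1_H$ in slot $i$; since $\theta(t)\in\theta(S)\subseteq S$, the same membership criterion gives $\tfrac{\theta(k(u_1,\ldots,1_H,\ldots,u_n))}{\theta(k(s_1,\ldots,1_H,\ldots,s_n))}\in S^{-1}E$, and this fraction is precisely $\bar{\theta}\big(k(\tfrac{u_1}{s_1},\ldots,\tfrac{1_H}{1_H},\ldots,\tfrac{u_n}{s_n})\big)$ with $\tfrac{1_H}{1_H}$ in slot $i$.

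In every branch the defining condition for $S^{-1}E$ to be $n$-ary $Endo$-prime associated with $\bar{\theta}$ is met, which completes the sketch. I expect the genuine difficulty to be entirely bookkeeping rather than conceptual: keeping the position of the inserted $1_H$ straight through the associativity rearrangements, and pinning down the precise equivalence on fractions from \cite{mah5} so that the criterion ``$\tfrac{x}{s}\in S^{-1}E$ if and only if $k(w,x,1_H^{(n-2)})\in E$ for some $w\in S$'' is justified; once that criterion and the hypotheses $\theta(S)\subseteq S$, $1_H\in S$ are in hand, each case is a one-line verification.
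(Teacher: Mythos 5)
Your proposal is correct and follows essentially the same route as the paper: part (2) is exactly the paper's one-line reduction to Theorem \ref{10}(1) via the commuting square $\bar{\theta}\circ\bar{\bar{\theta}}=\bar{\bar{\theta}}\circ\theta$ (which you verify and the paper merely asserts), and part (1) uses the same membership criterion for $S^{-1}E$, the same regrouping $k(t,k(u_1^n),1_H^{(n-2)})=k(k(t,u_1,1_H^{(n-2)}),u_2^n)$, and the same four-case analysis. The only cosmetic difference is in the last case, where you strip off the factor $\theta(t)\in S$ via the membership criterion while the paper keeps $s$ inside the fraction by writing $\tfrac{1_H}{1_H}=\tfrac{s}{s}$; both come to the same thing.
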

\begin{proof}
(1) Let $ K(\frac{u_1}{s_1},\ldots,\frac{u_n}{s_n}) \in S^{-1}E$ for $\frac{u_1}{s_1},\ldots,\frac{u_n}{s_n} \in S^{-1}H$.
Then $\frac{k(u_1^n)}{k(s_1^n)} \in S^{-1}E$ and so there exists $s \in S$ such that $ k(s,k(u_1^n),1_H^{(n-2)}) \in E$. Since $E$ is an $n$-ary $Endo$-prime hyperideal of $H$ associated with $\theta$ and  $k\big(k(s,u_1,1_H^{(n-2)}),u_2^n \big) \in E$, we get $k(s,u_1,1_H^{(n-2)}) \in E$ or  $\theta \big( k(1_H,u_2^n )\big) \in E$ or $u_i \in E$ for some $i \in \{2,\ldots,n\}$ or $\theta\big(k(s,u_1,1_H^{(n-2)}),u_2^{i-1},1_H,u_{i+1}^n \big) \in E$ . In the first possibility, we have $\frac{u_1}{s_1}=\frac{k(u_1,1_H^{(n-1)})}{k(s_1,1_H^{(n-2)})}=\frac{k(s,u_1,1_H^{(n-2)})}{k(s,s_1,1_H^{(n-2)})} \in S^{-1}E$. In the second possibility, we get $\bar{\theta}\big(K(\frac{1_H}{1_H},\frac{u_2}{s_2},\ldots,\frac{u_n}{s_n})\big)=\bar{\theta} \big(\frac{k(1_H,u_2^n )}{k(1_H,s_2^n )} \big)=\frac{\theta \big( k(1_H,u_2^n )\big)}{  \theta \big(k(1_H,s_2^n ) \big)} \in S^{-1}E$. In the third possibility, we obtain $\frac{u_i}{s_i} \in S^{-1}E$ for some $i \in \{1,\ldots,n\}$. In the fourth possibility, we have 
$\bar{\theta}\big(K(\frac{u_1}{s_1},\frac{u_2}{s_2},\ldots,\frac{u_{i-1}}{s_{i-1}},\frac{1_H}{1_H},\frac{u_{i+1}}{s_{i+1}},\ldots,\frac{u_n}{s_n})\big)=\bar{\theta} \big(\frac{k(u_1,u_2^{i-1},s,u_{i+1}^n )}{k(s_1,s_2^{i-1},s,s_{i+1}^n )} \big)=\frac{\theta\big(k(s,u_1,1_H^{(n-2)}),u_2^{i-1},1_H,u_{i+1}^n \big)}{  \theta \big(k(k(s,s_1),s_2^{i-1},1_H,s_{i+1}^n ) \big) } \in S^{-1}E$. Thus, $S^{-1}E$ an $n$-ary $Endo$-prime hyperideal of $S^{-1}H$ associated with $\bar{\theta}$.

(2) Since $\bar{\theta} \circ \bar{\bar{\theta}}=\bar{\bar{\theta}} \circ \theta$, we are done by Theorem \ref{10} (1).
\end{proof}
Here, we establish the following result.
\begin{corollary}
Let $\theta : H \longrightarrow H$ be an   endomorphism and $S \subseteq H$  be an $n$-ary multiplicative set containing $\theta(S)$ and $1_H$. Then there is a one-to-one correspondence between $n$-ary $Endo$-prime hyperideals $E$ of $H$ associated with $\theta$ disjoint with $S$ and $n$-ary $Endo$-prime hyperideals  of $S^{-1}H$ associated with $\bar{\theta}$.
\end{corollary}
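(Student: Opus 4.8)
The plan is to extract the correspondence directly from Theorem~\ref{8}. I would define two maps: a forward map $\Phi(E)=S^{-1}E$ on the collection of $n$-ary $Endo$-prime hyperideals of $H$ associated with $\theta$ that are disjoint from $S$, and a backward map $\Psi(F)={\bar{\bar{\theta}}}^{-1}(F)$ on the collection of $n$-ary $Endo$-prime hyperideals of $S^{-1}H$ associated with $\bar{\theta}$. Theorem~\ref{8}(1) shows that $\Phi$ takes values in the target family, and Theorem~\ref{8}(2) shows that $\Psi$ takes values among the $n$-ary $Endo$-prime hyperideals of $H$ associated with $\theta$; since every element of $S$ becomes invertible in $S^{-1}H$ while $F$ is proper, $\Psi(F)$ is automatically disjoint from $S$. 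It then remains only to prove that $\Phi$ and $\Psi$ are mutually inverse.

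The direction $\Phi\circ\Psi=\mathrm{id}$, that is $S^{-1}\big({\bar{\bar{\theta}}}^{-1}(F)\big)=F$, is the routine extension--contraction identity for the hyperring of fractions of \cite{mah5} and needs no $Endo$-prime input. For the inclusion $\supseteq$, write an arbitrary $x\in F$ as $x=\frac{u}{s}$; then $\frac{u}{1}=k\big(\frac{u}{s},\frac{s}{1},\big(\tfrac{1_H}{1_H}\big)^{(n-2)}\big)\in F$ because $F$ is a hyperideal, so $u\in{\bar{\bar{\theta}}}^{-1}(F)$ and hence $x=\frac{u}{s}\in S^{-1}\big({\bar{\bar{\theta}}}^{-1}(F)\big)$; the inclusion $\subseteq$ is the same computation run backwards. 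This step uses only the machinery of the Krasner $(m,n)$-hyperring of fractions and nothing about $\theta$.

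The hard part will be the identity $\Psi\circ\Phi=\mathrm{id}$, namely ${\bar{\bar{\theta}}}^{-1}(S^{-1}E)=E$ for $E$ disjoint from $S$. The inclusion $E\subseteq{\bar{\bar{\theta}}}^{-1}(S^{-1}E)$ is immediate, so the content lies in the reverse inclusion. Given $u\in{\bar{\bar{\theta}}}^{-1}(S^{-1}E)$ one has $\frac{u}{1}\in S^{-1}E$, whence there is $s\in S$ with $k(s,u,1_H^{(n-2)})\in E$. Here I would invoke the $Endo$-prime property of $E$ on this product, running the defining disjunction over the coordinate it selects: the coordinate carrying $s$ is excluded because $s\notin E$, the coordinates carrying $1_H$ are excluded because $1_H\in S$ forces $1_H\notin E$, and the twisted alternative arising from the $s$-coordinate is excluded because $\theta(s)\in\theta(S)\subseteq S$ gives $\theta(s)\notin E$. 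This is exactly where the standing hypotheses $E\cap S=\varnothing$, $1_H\in S$ and $\theta(S)\subseteq S$ enter, together with Proposition~\ref{2} (which gives $\theta(E)\subseteq E$) to absorb the $\theta$-twisted products. The main obstacle is to control the $\theta$-twisting cleanly: the coordinate the definition returns may yield $\theta(u)$, or a $\theta$-twisted product of $s$ and $u$, rather than $u$ itself, and the crux of the argument is to feed such alternatives back through the $Endo$-prime property---using $\theta(S)\subseteq S$ at each pass---until the excluded-coordinate bookkeeping forces $u\in E$. Once this saturation identity is secured, $\Phi$ and $\Psi$ are inverse bijections, and the claimed one-to-one correspondence follows.
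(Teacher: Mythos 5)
Your overall architecture (extension map $E\mapsto S^{-1}E$, contraction map $F\mapsto{\bar{\bar{\theta}}}^{-1}(F)$, well-definedness from Theorem \ref{8}, then checking the two composites) is the natural one, and the paper itself offers no proof of this corollary at all, so there is nothing more specific to compare against. The composite $S^{-1}\big({\bar{\bar{\theta}}}^{-1}(F)\big)=F$ and the disjointness of ${\bar{\bar{\theta}}}^{-1}(F)$ from $S$ are handled correctly. But the step you yourself label the crux --- ${\bar{\bar{\theta}}}^{-1}(S^{-1}E)=E$ --- is not actually proved: you describe the obstacle and then write ``once this saturation identity is secured,'' which is a promissory note, not an argument. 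And the obstacle is genuine. From $\frac{u}{1}\in S^{-1}E$ you get $k(s,u,1_H^{(n-2)})\in E$ for some $s\in S$. Applying the defining disjunction, the only informative alternatives are: $u\in E$ (what you want) or, from the $s$-coordinate, $\theta\big(k(u,1_H^{(n-1)})\big)=\theta(u)\in E$. Nothing rules out the second alternative, and ``feeding it back through'' only escalates to $\theta^j(u)\in E$ for some $j$, which never collapses to $u\in E$ without an extra hypothesis (e.g.\ $\theta^{-1}(E)\subseteq E$). So the bookkeeping does not close.

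Worse, for $n\geq 3$ the hypothesis gives even less than that: the definition only asserts the disjunction \emph{for some} $i$, and the disjunct attached to any $1_H$-coordinate reads $1_H\in E$ or $\theta\big(k(s,u,1_H^{(n-2)})\big)\in E$; the latter holds automatically by Proposition \ref{2} because $k(s,u,1_H^{(n-2)})\in E$. Hence the $Endo$-prime property of $E$ is vacuously satisfied on this product and yields no information about $u$ whatsoever. What your plan really needs is injectivity of $E\mapsto S^{-1}E$ on the family of $Endo$-prime hyperideals disjoint from $S$, and that does not follow from the definition as stated (it would follow if the definition were read with ``for all $i$'' in place of ``for some $i$,'' since the $u$-coordinate disjunct then gives $u\in E$ or $\theta(s)\in E$, and $\theta(s)\in\theta(S)\subseteq S$ kills the second option). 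Either supply that strengthened reading explicitly, or restrict the claimed bijection to the image of the extension map; as written, the proof of the correspondence is incomplete.
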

Let $(H_1, h_1, k_1)$ and $(H_2, h_2, k_2)$ be two commutative Krasner $(m,n)$-hyperrings and, $1_{H_1}$ and $1_{H_2}$ be scalar identities of $H_1$ and $H_2$, respectively. Then the triple $(H_1 \times H_2, h _1 \times h_2 ,k_1 \times k_2 )$ is a Krasner $(m, n)$-hyperring such that $m$-ary hyperoperation 
$h _1 \times h_2 $ and $n$-ary operation $k_1 \times k_2$ are defined as follows:

$\hspace{1cm} h_1 \times h_2 \big((x_{1}, y_{1}),\ldots,(x_m,y_m) \big) = \{(x,y) \ \vert \ \ x \in h_1(x_1^m), y \in h_2(y_1^m) \}$

$\hspace{1cm} k_1 \times k_2 \big((u_1,v_1),\ldots,(u_n,v_n)) =(k_1(u_1^n),k_2(v_1^n) \big) $,\\
for $x_1^m,u_1^n \in H_1$ and $y_1^m,v_1^n \in H_2$ \cite{mah2}. 
\begin{theorem}\label{9}
Let $(H_1, h_1, k_1)$ and $(H_2, h_2, k_2)$ be two commutative Krasner $(m,n)$-hyperrings and, $\theta_1: H_1 \longrightarrow H_1$ and $\theta_2: H_2 \longrightarrow H_2$ be two endomorphisms. Then, the following statements are equivalent:
\begin{itemize} 
\item[\rm{(1)}]~ $E$ is an $n$-ary $Endo$-prime hyperideal of $H_1 \times H_2$ associated with $\theta_1 \times \theta_2$.
\item[\rm{(2)}]~ $E=E_1 \times H_2$ such that $E_1$ is an $n$-ary $Endo$-prime hyperideal of $H_1$ associated with $\theta_1$ or $E=H_1 \times E_2$ such that $E_2$ is an $n$-ary $Endo$-prime hyperideal of $H_2$ associated with $\theta_2$.
\end{itemize} 

\end{theorem}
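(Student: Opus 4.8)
The plan is to prove both implications, with $(1)\Rightarrow(2)$ carrying the weight. Write $e_1=(1_{H_1},0)$ and $e_2=(0,1_{H_2})$, and note the scalar identity of the product is $(1_{H_1},1_{H_2})$. First I would record that every hyperideal $E$ of $H_1\times H_2$ splits as $E=E_1\times E_2$ with $E_i$ a hyperideal of $H_i$: put $E_1=\{x\in H_1:(x,0)\in E\}$ and $E_2=\{y\in H_2:(0,y)\in E\}$; since $(x,0)=k\big((x,y),e_1,(1_{H_1},1_{H_2})^{(n-2)}\big)$ and $(0,y)=k\big((x,y),e_2,(1_{H_1},1_{H_2})^{(n-2)}\big)$, any $(x,y)\in E$ gives $x\in E_1$ and $y\in E_2$, whence $E\subseteq E_1\times E_2$; conversely $(x,y)\in h_1\times h_2\big((x,0),(0,y),0^{(m-2)}\big)=\{(x,y)\}$ yields the reverse inclusion. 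As $E$ is proper, $E_1=H_1$ and $E_2=H_2$ cannot both hold.

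For $(1)\Rightarrow(2)$, the decisive move is to use the colon characterization, Theorem \ref{madar}(2), rather than the raw definition. (The element-wise argument is awkward here: $k(e_1,e_2,(1_{H_1},1_{H_2})^{(n-2)})=(0,0)\in E$, but for $n\ge 3$ the surplus identity slots make some remainder $k\big(\cdots,(1_{H_1},1_{H_2}),\cdots\big)$ equal to $(0,0)\in E$, so the existential over $i$ in the definition is met vacuously and yields nothing.) Applying the equivalence $(1)\Leftrightarrow(2)$ of Theorem \ref{madar} to $u=e_1$, I would compute $\langle e_1\rangle=\{(s,0):s\in H_1\}=H_1\times\{0\}$ and $(E:e_1)=\{(s,t):(s,0)\in E\}=E_1\times H_2$. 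Theorem \ref{madar}(2) then gives $\langle e_1\rangle\subseteq E$ or $\theta(E:e_1)\subseteq E$. In the first case $H_1\times\{0\}\subseteq E_1\times E_2$ forces $E_1=H_1$; in the second, since $(0,1_{H_2})=(\theta_1\times\theta_2)(0,1_{H_2})\in\theta(E:e_1)\subseteq E$ (using $\theta_2(1_{H_2})=1_{H_2}$), we get $1_{H_2}\in E_2$, i.e. $E_2=H_2$. Hence $E=E_1\times H_2$ or $E=H_1\times E_2$.

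It remains, in the case $E_2=H_2$, to check that the proper hyperideal $E_1$ is $n$-ary $Endo$-prime for $\theta_1$ (the case $E_1=H_1$ is symmetric). Given $u_1^n\in H_1$ with $k_1(u_1^n)\in E_1$, I would test $w_j=(u_j,1_{H_2})$: then $k(w_1^n)=(k_1(u_1^n),1_{H_2})\in E_1\times H_2=E$, so the $Endo$-prime property of $E$ supplies an $i$ with $w_i\in E$ or $(\theta_1\times\theta_2)\big(k(w_1^{i-1},(1_{H_1},1_{H_2}),w_{i+1}^n)\big)\in E$; reading off the first coordinate (the second being $1_{H_2}$, fixed by $\theta_2$) this is precisely $u_i\in E_1$ or $\theta_1\big(k_1(u_1^{i-1},1_{H_1},u_{i+1}^n)\big)\in E_1$. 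The converse $(2)\Rightarrow(1)$ is the same computation run backwards: for $E=E_1\times H_2$ and $w_j=(u_j,v_j)$ with $k(w_1^n)\in E$ one has $k_1(u_1^n)\in E_1$, and the conclusion for $E_1$ lifts coordinatewise (the second coordinate always lying in $H_2$) to the conclusion for $E$, with properness inherited from that of $E_1$. I expect the genuine obstacle to be concentrated in the second paragraph: it is exactly the passage to Theorem \ref{madar}(2) that repairs the failure of the naive idempotent argument and makes the splitting work uniformly in $n$.
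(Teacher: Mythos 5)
Your proof is correct, and although it follows the same overall skeleton as the paper's (split $E=E_1\times E_2$, force one factor to be the whole hyperring, then transfer the $Endo$-prime property to the remaining factor, with $(2)\Rightarrow(1)$ done coordinatewise), it diverges at the one step that actually carries the difficulty, and your version is the sound one. The paper obtains the dichotomy $E=H_1\times E_2$ or $E=E_1\times H_2$ by applying the raw definition to $k_1\times k_2\big((1_{H_1},0),(0,1_{H_2})^{(n-1)}\big)=(0,0)\in E$ and reading off only the $i=1$ disjunct; as you observe, for $n\ge 3$ the existential ``for some $i$'' in the definition is witnessed vacuously at any $i\ge 2$, where the remainder product $k\big(u_1^{i-1},(1_{H_1},1_{H_2}),u_{i+1}^n\big)$ still contains both idempotents and hence equals $(0,0)\in E$, so the paper's argument yields nothing beyond $n=2$. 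Your substitution of Theorem \ref{madar}(2) with $u=(1_{H_1},0)$, together with the explicit computations $\langle(1_{H_1},0)\rangle=H_1\times\{0\}$ and $\big(E:(1_{H_1},0)\big)=E_1\times H_2$, produces the dichotomy uniformly in $n$ because that characterization is universally quantified rather than existential; this is a genuine repair, not merely a stylistic variant. You also supply the verification that every hyperideal of $H_1\times H_2$ splits as $E_1\times E_2$, which the paper asserts without proof, and your choice of test elements $(u_j,1_{H_2})$ in the transfer step (versus the paper's $(0,v_j)$ in the symmetric case) is equally valid since the constant coordinate never degenerates to $0$. The only cost of your route is the dependence on Theorem \ref{madar}, but that is a stated result of the paper and is legitimately available to you.
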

\begin{proof}
(1) $\Longrightarrow$ (2) Assume that $E$ is an $n$-ary $Endo$-prime hyperideal of $H_1 \times H_2$ associated with $\theta_1 \times \theta_2$. Therefore, $E=E_1 \times E_2$ where $E_1$ and $E_2$ are hyperideals of $H_1$ and $H_2$, respectively. From $k_1 \times k_2 \big((1_{H_1},0),(0,1_{H_2})^{(n-1)} \big) \in E_1 \times E_2$, it follows that $(1_{H_1},0) \in E_1 \times E_2$ which means $E=H_1 \times E_2$ or $\theta_1 \times \theta_2 \big(k_1 \times k_2\big((1_{H_1},1_{H_2}),(0,1_{H_2})^{(n-1)} \big)\big)=(0,1_{H_2}) \in E_1 \times E_2$ which implies $E=E_1 \times H_2$. Let us assume that $E=H_1 \times E_2$. Suppose that $k_2(v_1^n) \in E_2$ for $v_1^n \in H_2$. Hence, we have $k_1 \times k_2 \big( (0,v_1),\ldots,(0,v_n) \big) \in H_1 \times E_2$. By the hypothesis, we conclude that $(0,v_i) \in H_1 \times E_2$ for some $i \in \{1,\ldots,n\}$ which means $v_i \in E_2$  or $\theta_1 \times \theta_2 \big( k_1 \times k_2 \big( (0,v_1),\ldots,(0,v_{i-1}),(1_{H_1},1_{H_2}), (0,v_{i+1}), \ldots,(0,v_n) \big)\big)=\big(0,\theta_2 \big(k_2(v_1^{i-1},1_{H_2},v_{i+1}^n) \big) \big) \in H_1 \times E_2$ which implies $\theta_2 \big(k_2(v_1^{i-1},1_{H_2},v_{i+1}^n) \big) \in E_2$. Thus, $E=H_1 \times E_2$ where $E_2$ is an $n$-ary $Endo$-prime hyperideal of $H_2$ associated with $\theta_2$.

(2) $\Longrightarrow$ (1)It is obvious.
\end{proof}
Now, we have the following result obtained by using mathematical induction on $t$ and previous theorem.
\begin{corollary}
Assume that $(H_i,h_i,k_i)$ is a Krasner $(m,n)$-hyperring and $\theta_i: H_i \longrightarrow H_i$ is an endomorphism for each $i \in \{1,\ldots,t\}$. Then, the following statements are equivalent:
\begin{itemize} 
\item[\rm{(1)}]~ $E$ is an $n$-ary $Endo$-prime hyperideal of $H_1 \times \cdots \times H_t$ associated with $\theta_1 \times \cdots \times \theta_t$. 
\item[\rm{(2)}]~ $E=E_1 \times \cdots \times E_t$ such that $E_j$ is an $n$-ary $Endo$-prime hyperideal of $H_j$ associated with $\theta_j$ for some $j \in \{1,\ldots,t\}$ and $E_i=H_i$ for every $i \neq j$.
\end{itemize}
\end{corollary}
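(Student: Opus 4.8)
The plan is to argue by induction on $t$, with the preceding theorem (Theorem \ref{9}) serving both as the base case and as the engine of the inductive step. The case $t=1$ is vacuous, and for $t=2$ the statement is precisely Theorem \ref{9}. So I would assume the equivalence for products of fewer than $t$ factors and prove it for $t$.

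For the inductive step, first I would record the canonical identification of Krasner $(m,n)$-hyperrings
\[
H_1 \times \cdots \times H_t \;\cong\; (H_1 \times \cdots \times H_{t-1}) \times H_t,
\]
under which the endomorphism $\theta_1 \times \cdots \times \theta_t$ corresponds to $(\theta_1 \times \cdots \times \theta_{t-1}) \times \theta_t$. Writing $K = H_1 \times \cdots \times H_{t-1}$ and $\vartheta = \theta_1 \times \cdots \times \theta_{t-1}$, the hyperideal $E$ of $K \times H_t$ is then $n$-ary $Endo$-prime associated with $\vartheta \times \theta_t$ if and only if, by Theorem \ref{9}, either $E = E' \times H_t$ with $E'$ an $n$-ary $Endo$-prime hyperideal of $K$ associated with $\vartheta$, or $E = K \times E_t$ with $E_t$ an $n$-ary $Endo$-prime hyperideal of $H_t$ associated with $\theta_t$.

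In the first alternative I would apply the induction hypothesis to $E'$, obtaining $E' = E_1 \times \cdots \times E_{t-1}$ where exactly one factor $E_j$ (for some $j \in \{1,\ldots,t-1\}$) is $n$-ary $Endo$-prime and the remaining factors equal the full hyperrings; adjoining the factor $H_t$ then exhibits $E = E_1 \times \cdots \times E_{t-1} \times H_t$ in the required form with distinguished index $j \leq t-1$ and $E_t = H_t$. In the second alternative, $E = H_1 \times \cdots \times H_{t-1} \times E_t$ already has the required form with distinguished index $j = t$. The converse direction follows by reading the same chain of equivalences backwards, again using the induction hypothesis in the first alternative and Theorem \ref{9} at the top level.

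The step I expect to demand the most care is the bookkeeping around the associativity identification above: one must check that the product hyperoperations $h_1 \times \cdots \times h_t$ and $k_1 \times \cdots \times k_t$ restrict correctly under the grouping $(H_1 \times \cdots \times H_{t-1}) \times H_t$, and that the product of endomorphisms respects this regrouping, so that Theorem \ref{9} genuinely applies. Once this compatibility is in place the remainder is a routine transcription of the two cases, and the normal form with a single distinguished factor is preserved automatically, since only one of the two alternatives in Theorem \ref{9} can place a proper $Endo$-prime factor at a time.
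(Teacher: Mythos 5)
Your proof is correct and follows exactly the route the paper intends: the paper gives no written argument beyond the remark that the corollary is ``obtained by using mathematical induction on $t$ and the previous theorem,'' and your induction on $t$ via the regrouping $(H_1\times\cdots\times H_{t-1})\times H_t$ together with Theorem \ref{9} is precisely that argument, spelled out. The compatibility check you flag for the associativity identification is the right thing to be careful about, and it goes through directly from the componentwise definitions of $h_1\times\cdots\times h_t$ and $k_1\times\cdots\times k_t$.
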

\section{an expansion of the $Endo$-hyperideals}
In this section, we generalize the notion of $n$-ary $Endo$-prime hyperideals in a Krasner $(m,n)$-hyperring introducing and studying the concept of $n$-ary $Endo$-primary hyperideals.
\begin{definition} 
Let $E$ be a proper hyperideal of $H$ and $\theta : H \longrightarrow H$ be an   endomorphism.  $E$ refers to an $n$-ary $Endo$-primary  hyperideal associated with $\theta$, if for all $u_1^n \in H$, $k(u_1^n) \in E$ implies that $u_i \in E$ or $\theta\big(k(u_ 1^{i-1},1_H,u_{i+1}^n)\big) \in rad(E)$ for some $i \in \{1,\ldots,n\}$. 
\end{definition}
Although every $n$-ary $Endo$-prime  hyperideal associated with $\theta$ is an $n$-ary $Endo$-primary  hyperideal associated with $\theta$, the following verifies that the converse of the explanation may not be generally
true. 
\begin{example} 
Consider the Krasner $(2,3)$-hyperring  $(H=[0,1],\oplus, \cdot)$ where  the 2-ary hyperoperation $"\oplus"$ defined by 

\[ 
u \oplus v=
\begin{cases}
$[0,u]$, & \text{if $u =v$}\\
\{\max\{u,v\}\}, & \text{if $u \neq v$}
\end{cases} \]

and  $"\cdot"$ is the usual multiplication on real numbers. In  this hyperring,  the hyperideal $E=[0,0.5]$ is a  $3$-ary $Endo$-primary hyperideal associated with $\theta$ where $\theta$ is the inclusion homomorphism. However, $E$ is not a  $3$-ary $Endo$-primary hyperideal associated with $\theta$ since $0.9 \cdot 0.9 \cdot 0.6 \in E$ but $0.9, 0.6 \notin E$ and $0.9 \cdot 0.9, 0.9 \cdot 0.6 \notin E$. 
\end{example}
 Theorem 4.28 in \cite{sorc1}  verifies that the radical of an $n$-ary  primary hyperideal of $H$ is an $n$-ary prime hyperideal. Now, the following theorem shows that the radical of an $n$-ary $Endo$-primary hyperideal associated with $\theta$ is an $n$-ary $Endo$-prime hyperideal associated with $\theta$.
\begin{theorem}  
Let  $E$ be an  $n$-ary $Endo$-primary hyperideal associated with $\theta$ where $\theta : H \longrightarrow H$ is an   endomorphism. Then   $rad(E)$ is an $n$-ary $Endo$-prime hyperideal associated with $\theta$.
\end{theorem}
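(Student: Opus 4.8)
The plan is to mimic the argument of Theorem \ref{1}, but to feed the regrouped product into the Endo-primary hypothesis rather than the Endo-prime one. The only genuinely new feature is that the Endo-primary property returns its $\theta$-alternative inside $rad(E)$ instead of inside $E$; since the conclusion we are after also lives in $rad(E)$, this costs nothing once we observe that $rad(rad(E)) = rad(E)$ (a prime hyperideal contains $rad(E)$ if and only if it contains $E$, so both hyperideals are cut out by exactly the same $n$-ary prime hyperideals).

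Concretely, I would start from $u_1^n \in H$ with $k(u_1^n) \in rad(E)$ and use the description of $rad(I)$ recalled in Section 2 to fix $r \in \mathbb{N}$ with $k\big((k(u_1^n))^{(r)},1_H^{(n-r)}\big) \in E$ when $r \le n$, deferring the case $r = l(n-1)+1 > n$ (governed by $k_{(l)}$) to the very end. Fixing an index $i$ and writing $b = k(u_1^{i-1},1_H,u_{i+1}^n)$, the identity $k(u_1^n) = k\big(u_i, b, 1_H^{(n-2)}\big)$ together with commutativity and associativity turns the witness into $k\big(u_i^{(r)}, b^{(r)}, 1_H^{(n-2r)}\big) \in E$. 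I would then collapse the two blocks into single factors $A := k\big(u_i^{(r)},1_H^{(n-r)}\big)$ and $B := k\big(b^{(r)},1_H^{(n-r)}\big)$, so that associativity yields $k\big(A,B,1_H^{(n-2)}\big) \in E$.

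Applying the Endo-primary property to the two-factor product $k(A,B,1_H^{(n-2)}) \in E$ and reading off the alternative at the coordinate $A$, one gets $A \in E$ or $\theta(B) \in rad(E)$. In the first case $k\big(u_i^{(r)},1_H^{(n-r)}\big) \in E$ is exactly a radical witness for $u_i$, so $u_i \in rad(E)$. In the second case, since $\theta$ is an endomorphism fixing $1_H$, we have $\theta(B) = k\big(\theta(b)^{(r)},1_H^{(n-r)}\big) \in rad(E)$, which is a radical witness for $\theta(b)$ over $rad(E)$, whence $\theta(b) \in rad(rad(E)) = rad(E)$. Either way we obtain $u_i \in rad(E)$ or $\theta\big(k(u_1^{i-1},1_H,u_{i+1}^n)\big) \in rad(E)$, which is precisely the defining condition for $rad(E)$ to be an $n$-ary Endo-prime hyperideal associated with $\theta$.

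I expect the main obstacle to be bookkeeping rather than ideas: one must track the padding counts of the iterated hyperoperation (the repeated factors $u_i^{(r)}$ and $b^{(r)}$ against the identity fillers), justify the collapse into $A$ and $B$ by associativity, and carry out the parallel computation in the $r > n$ regime with $k_{(l)}$ replacing each padded product $k(\cdot^{(r)},1_H^{(n-r)})$. The conceptual content is confined to the single observation that the Endo-primary alternative $\theta(B) \in rad(E)$ is absorbed by the outer radical via $rad(rad(E)) = rad(E)$, and it is exactly this absorption that upgrades the primary hypothesis on $E$ to the Endo-prime conclusion on $rad(E)$.
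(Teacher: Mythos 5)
The proposal is correct and takes essentially the same approach as the paper's proof: regroup the radical witness $k\big(k(u_1^n)^{(r)},1_H^{(n-r)}\big)\in E$ into a two-factor product $k\big(A,B,1_H^{(n-2)}\big)$ with $A=k(u_i^{(r)},1_H^{(n-r)})$ and $B=k\big(k(u_1^{i-1},1_H,u_{i+1}^n)^{(r)},1_H^{(n-r)}\big)$, apply the Endo-primary hypothesis, and handle the $r=l(n-1)+1$ case analogously. The only (cosmetic) difference is that you justify the final absorption step explicitly via $rad(rad(E))=rad(E)$, which the paper uses silently.
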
 
\begin{proof} 
Let $k(u_1^n) \in rad(E)$ for $u_1^n \in H$. Then, there exists $r \in \mathbb{N}$ such that if $r \leq n$, then $k\big(k(u_1^n)^{(r)},1_H^{n-r}\big) \in E$. Hence, $k\big(u_i^{(r)},k(u_1^{i-1},1_H,u_{i+1}^n)^{(r)},1_H^{(n-2r)} \big) \in E$ and so $k\big(k(u_i^{(r)},1_H^{(n-r)}),k(k(u_1^{i-1},1_H,u_{i+1}^n)^{(r)},1_H^{(n-r)}),1_H^{(n-2)} \big) \in E$. 
Since $E$ is an  $n$-ary $Endo$-primary hyperideal associated with $\theta$,   $ k(u_i^{(r)},1_H^{(n-r)}) \in E$ or $k \big(\theta\big(k(u_1^{i-1},1_H,u_{i+1}^n))^{(r)},1_H^{(n-r)}\big) \big)=\theta \big(k\big(k(u_1^{i-1},1_H,u_{i+1}^n)^{(r)},1_H^{(n-r)}\big) \big) \in rad(E)$. This means that $u_i \in rad(E)$ or $\theta\big(k(u_ 1^{i-1},1_H,u_{i+1}^n)\big) \in rad(E)$. Thus, we conclude that $rad(E)$ is an $n$-ary $Endo$-prime hyperideal associated with $\theta$.
Now, let $r=l(n-1)+1$. So, we get $k_{(l)}(k(u_1^n)^{(r)}) \in E$. In this case, by a similar argument we conclude that  $rad(E)$ is an $n$-ary $Endo$-prime hyperideal associated with $\theta$.
\end{proof} 
\begin{proposition} 
Let $E$ be a proper hyperideal of $H$, $\theta : H \longrightarrow H$  an   endomorphism and $u \in H$. If $E$ is   an $n$-ary $Endo$-primary hyperideal associated with $\theta$, then the following holds:
\begin{itemize} 
\item[\rm{(1)}]~$\theta(E) \subseteq rad(E)$.
\item[\rm{(2)}]~$k \big(u^ {(r)} , 1_H^{(n-r)} ) \big) \in E$ with $r \leq n$, or $  k_{(l)} (u^ {(r)} )  \in E$ with $r = l(n-1) + 1$ implies that $u \in E$ or $\theta (u) \in rad(E)$.
\end{itemize} 
\end{proposition}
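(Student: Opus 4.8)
The plan is to handle the two parts separately, modeling each on the arguments already given for the $Endo$-prime case: part (1) mirrors the proof of Proposition \ref{2}, and part (2) mirrors the minimal-exponent argument of Proposition \ref{2.1}, in both cases replacing the target $E$ by $rad(E)$ and invoking that $\theta$ is a homomorphism with $\theta(1_H)=1_H$.

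For part (1), take $u\in E$. Since $1_H$ is the scalar identity we have $k(u,1_H^{(n-1)})=u\in E$, so the defining property of an $n$-ary $Endo$-primary hyperideal applied to this product forces $1_H\in E$ or $\theta(u)=\theta\big(k(u,1_H^{(n-1)})\big)\in rad(E)$, exactly as in Proposition \ref{2}. The first alternative is impossible because $E$ is proper (if $1_H\in E$ then $v=k(v,1_H^{(n-1)})\in E$ for every $v$, forcing $E=H$). Hence $\theta(u)\in rad(E)$, and therefore $\theta(E)\subseteq rad(E)$.

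For part (2) I would treat the case $r\leq n$ first and reduce to the minimal exponent. The conclusion is immediate if $u\in E$, so assume $u\notin E$ and let $r$ be the least positive integer with $k(u^{(r)},1_H^{(n-r)})\in E$; this exists by hypothesis, and $r\geq 2$ since $r=1$ would give $u=k(u,1_H^{(n-1)})\in E$. Using associativity and the scalar identity I would rewrite $k(u^{(r)},1_H^{(n-r)})=k\big(a,u,1_H^{(n-2)}\big)$ with $a=k(u^{(r-1)},1_H^{(n-r+1)})$. By minimality of $r$ we have $a\notin E$, while $u\notin E$ and $1_H\notin E$ as well, so no coordinate of $k(a,u,1_H^{(n-2)})$ lies in $E$. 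Applying the defining property to $k(a,u,1_H^{(n-2)})\in E$ therefore excludes the ``plain coordinate'' alternative, and one of the three radical alternatives must hold: $\theta(u)\in rad(E)$, or $\theta(a)\in rad(E)$, or $\theta\big(k(a,u,1_H^{(n-2)})\big)\in rad(E)$.

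The first alternative gives the claim directly. In the remaining two I would use that $\theta$ is a homomorphism fixing $1_H$, so $\theta(a)=k(\theta(u)^{(r-1)},1_H^{(n-r+1)})$ and $\theta\big(k(a,u,1_H^{(n-2)})\big)=k(\theta(u)^{(r)},1_H^{(n-r)})$; since each of these lies in $rad(E)$, the characterization of $rad$ recalled in Section~2 together with the idempotency $rad(rad(E))=rad(E)$ (immediate, as $rad(E)$ is an intersection of $n$-ary prime hyperideals) yields $\theta(u)\in rad(rad(E))=rad(E)$. This settles $r\leq n$. For $r=l(n-1)+1>n$ the same scheme applies once $k_{(l)}(u^{(r)})$ is factored, via associativity, so as to split off a single copy of $u$; I expect this bookkeeping with the iterated operation $k_{(l)}$ — rather than any conceptual point — to be the only delicate step, since the radical-descent and homomorphism arguments carry over unchanged.
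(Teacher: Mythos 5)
Your proof is correct. Part (1) coincides with the paper's argument: both write $u=k(u,1_H^{(n-1)})\in E$, apply the defining condition, and discard the alternative $1_H\in E$ because $E$ is proper. For part (2) the paper proceeds differently: it peels off one factor of $u$ at a time, writing $k(u^{(r)},1_H^{(n-r)})=k\big(u,k(u^{(r-1)},1_H^{(n-r+1)}),1_H^{(n-2)}\big)$ and concluding at each stage that either $k(u^{(r-1)},1_H^{(n-r+1)})\in E$ (whereupon it descends to $r-1$) or $\theta(u)\in rad(E)$, terminating at $u\in E$; this descent never needs the power characterization of $rad$ nor the identity $rad(rad(E))=rad(E)$. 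You instead fix a minimal $r$, use minimality together with properness of $E$ to rule out every ``coordinate in $E$'' alternative in one stroke, and then absorb the remaining alternatives $\theta(a)\in rad(E)$ and $\theta\big(k(a,u,1_H^{(n-2)})\big)\in rad(E)$ via $\theta(a)=k(\theta(u)^{(r-1)},1_H^{(n-r+1)})$, the description of $rad$ by powers, and radical idempotency. Your version is slightly heavier machinery, but it is also more complete: the definition of $Endo$-primary only asserts its disjunction for \emph{some} index $i$, so the alternatives $\theta(a)\in rad(E)$ and $\theta\big(k(u,a,1_H^{(n-2)})\big)\in rad(E)$ genuinely can occur, and the paper's proof silently skips them while you dispose of them explicitly. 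Both treatments leave the case $r=l(n-1)+1>n$ at the level of ``a similar argument,'' and your sketch of how to split off a single factor from $k_{(l)}(u^{(r)})$ is the right bookkeeping for it.
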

\begin{proof}
(1) Assume that  $u \in E$. So, we get $k(u,1_H^{(n-1)}) \in E$.  Since $E$ is   an $n$-ary $Endo$-primary hyperideal associated with $\theta$ and $1_H \notin E$, we have    $\theta(u)=\theta \big(k(u,1_H^{(n-1)}) \big) \in rad(E)$ which means $\theta(E) \subseteq rad(E)$.

(2) Let  $k \big(u^ {(r)} , 1_H^{(n-r)} ) \big) \in E$. Then,  we have $k(u,k(u^{(r-1)},1_H^{(n-r+1)}),1_H^{(n-2)}) \in E$. Since $E$ is   an $n$-ary $Endo$-primary hyperideal associated with $\theta$, we conclude that  $k(u^{(r-1)},1_H^{(n-r+1)}) \in E$ or $\theta(u) \in rad(E)$. In the first possibility, we get $k(u,k(u^{(r-2)},1_H^{(n-r+2)}),1_H^{(n-2)}) \in E$ which means $k(u^{(r-2)},1_H^{(n-r+2)}) \in E$ or $\theta(u) \in rad(E)$. Continuing in this manner, we have $u \in E$ or $\theta(u) \in rad(E)$, as needed. If $  k_{(l)} (u^ {(r)} )  \in E$ with $r = l(n-1) + 1$, then by a similar argument we get the result that $u \in E$ or $\theta(u) \in rad(E)$.
\end{proof}
Let $\theta : H \longrightarrow H$ be an   endomorphism. A proper hyperideal $M$ of $H$ refers to a $\theta$-maximal hyperideal if $M \subseteq E$ for some hyperideal $E$ of $H$, then $\theta(E) \subseteq M$ or $E=H$.
\begin{proposition}
Let $\theta : H \longrightarrow H$ be an   endomorphism. Then $0$ is a $\theta$-maximal hyperideal of $H$ if and only if $Max(H)=\{Ker \theta\}$.
\end{proposition}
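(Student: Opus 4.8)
The plan is to reduce the $\theta$-maximality of $0$ to a clean containment condition and then match it against the uniqueness of the maximal hyperideal. First I would note that $0 \subseteq E$ holds for every hyperideal $E$ of $H$, so by definition $0$ is a $\theta$-maximal hyperideal precisely when every hyperideal $E$ of $H$ satisfies $\theta(E) \subseteq 0$ or $E = H$. Since $\theta$ is a function, $\theta(E) \subseteq 0$ is equivalent to $\theta(u) = 0$ for every $u \in E$, that is, to $E \subseteq Ker \theta$. Thus the statement ``$0$ is $\theta$-maximal'' is equivalent to the following: for every hyperideal $E$ of $H$, either $E \subseteq Ker \theta$ or $E = H$. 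I would also record at the outset that $Ker \theta$ is a proper hyperideal of $H$; it is a hyperideal as the kernel of the homomorphism $\theta$, and it is proper because $\theta(1_H) = 1_H \neq 0$ forces $1_H \notin Ker \theta$.

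For the forward implication, assume $0$ is $\theta$-maximal and work with the reformulated condition. Applying it to any hyperideal $E$ with $Ker \theta \subseteq E \subseteq H$, the alternative $E \subseteq Ker \theta$ gives $E = Ker \theta$, so each such $E$ equals $Ker \theta$ or $H$; hence $Ker \theta$ is maximal. To see it is the only maximal hyperideal, take an arbitrary $M \in Max(H)$. Since $M$ is proper we have $M \neq H$, so the reformulated condition forces $M \subseteq Ker \theta$; maximality of $M$ together with $Ker \theta \neq H$ then yields $M = Ker \theta$. Therefore $Max(H) = \{Ker \theta\}$.

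For the converse, assume $Max(H) = \{Ker \theta\}$ and let $E$ be any hyperideal of $H$ with $E \neq H$. The key step is that every proper hyperideal of $H$ is contained in some maximal hyperideal; granting this, $E \subseteq M$ for some $M \in Max(H)$, and by hypothesis $M = Ker \theta$, so $E \subseteq Ker \theta$. By the reformulation this is exactly the assertion that $0$ is $\theta$-maximal, which completes the equivalence.

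The main obstacle I anticipate is the existence result invoked in the converse, namely that every proper hyperideal of $H$ extends to a maximal one. I expect to settle it by a standard Zorn's lemma argument: the poset of proper hyperideals containing $E$ is nonempty, and the union of any chain of proper hyperideals is again a hyperideal that stays proper because it cannot contain the scalar identity $1_H$; a maximal element of this poset is then a maximal hyperideal containing $E$. The remaining checks, that $Ker \theta$ is a hyperideal and that $\theta(E) \subseteq 0$ is equivalent to $E \subseteq Ker \theta$, are immediate from the definitions of homomorphism and kernel.
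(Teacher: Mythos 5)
Your proof is correct, but it takes a somewhat different route from the paper's. You reduce $\theta$-maximality of $0$ to the purely ideal-theoretic condition ``every hyperideal $E$ satisfies $E\subseteq Ker\,\theta$ or $E=H$'' and then compare this directly with $Max(H)=\{Ker\,\theta\}$, using Zorn's lemma (every proper hyperideal extends to a maximal one) only in the converse. The paper instead argues through invertibility of elements: in the forward direction it shows that any $u\notin Ker\,\theta$ generates the unit hyperideal, hence is invertible, and concludes that $Ker\,\theta$ is the unique maximal hyperideal; in the converse it picks $u\in E$ with $\theta(u)\neq 0$ and uses $Max(H)=\{Ker\,\theta\}$ to deduce that $u$ is invertible, so $E=H$. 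The two arguments rest on the same underlying facts (a hyperideal containing $1_H$ is all of $H$, and non-units lie in maximal hyperideals), but yours has two advantages: the forward direction needs no appeal to units or to Zorn's lemma at all, and you explicitly verify the point the paper leaves implicit, namely that $Ker\,\theta$ is itself a proper hyperideal and is maximal (the paper only shows that everything outside $Ker\,\theta$ is invertible and then asserts the conclusion). The paper's version, in exchange, makes the local-hyperring flavor of the statement more visible. Your outstanding obligations --- that $Ker\,\theta$ is a hyperideal and that unions of chains of proper hyperideals are proper hyperideals --- are routine in this setting, as you say.
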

\begin{proof}
$ \Longrightarrow$ Assume that $0$ is a $\theta$-maximal hyperideal of $H$. Take any $u \in H$ such that $u \notin Ker \theta$. This means that $\theta(u) \neq 0$ and so $\theta( \langle u \rangle) \nsubseteq 0$. By the hypothesis, we conclude that $\langle u \rangle=H$. It follows that $1_H=k(v,u,1_H^{(n-2)})$ for some $v \in H$ and so $u$ is invertible. Therefore, $Max(H)=\{Ker \theta\}$.

$\Longleftarrow$ Let $Max(H)=\{Ker \theta\}$ and $\theta(E) \neq 0$ for some hyperideal $E$ of $H$. It follows that $\theta(u) \neq 0$ for some $u \in E$ which means $u \notin Ker \theta$. This means that $u$ is invertible by the assumption and so $E=H$. Thus, $0$ is a $\theta$-maximal hyperideal of $H$.
\end{proof}
\begin{theorem}
Let $\theta : H \longrightarrow H$ be an   endomorphism. If $E_1,\ldots,E_t$ are $n$-ary $Endo$-primary hyperideals of $H$ associated with $\theta$ such that $rad(E_j)=rad(E_l)$ for all $j,l \in \{1,\ldots,t\}$, then $\cap_{j=1}^tE_j$ is an $n$-ary $Endo$-primary hyperideals of $H$ associated with $\theta$.
\end{theorem}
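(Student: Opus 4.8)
The plan is to write $E=\bigcap_{j=1}^{t}E_j$ and first dispose of the two structural facts that everything rests on: that $E$ is a proper hyperideal, and that $rad(E)$ equals the common radical $P:=rad(E_j)$. Properness and the hyperideal property are immediate, since an intersection of hyperideals is a hyperideal and $E\subseteq E_1\subsetneq H$. For the radical I would argue $rad(E)=P$ by double inclusion: monotonicity of $rad$ gives $rad(E)\subseteq rad(E_j)=P$ for each $j$, and conversely, if $u\in P$ then for every $j$ a suitable power of $u$ (in the sense of the displayed formula defining $rad$) already lies in $E_j$; since $E_j$ absorbs multiplication, all higher powers of $u$ stay in $E_j$, so taking the maximal exponent over the finitely many $j$ places one common power of $u$ in $E=\bigcap_j E_j$, whence $u\in rad(E)$. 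This is exactly where the hypothesis $rad(E_j)=rad(E_l)$ is used, and it yields $rad(E)=P$.

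Next I would reduce the main claim to a single factor. Suppose $k(u_1^n)\in E$; I want an index $i$ with $u_i\in E$ or $\theta\big(k(u_1^{i-1},1_H,u_{i+1}^n)\big)\in rad(E)=P$. Since $E\subseteq E_1$ and $E_1$ is $n$-ary $Endo$-primary, there is an index $i_0$ with $u_{i_0}\in E_1$ or $\theta\big(k(u_1^{i_0-1},1_H,u_{i_0+1}^n)\big)\in rad(E_1)=P$; in the latter case $i_0$ already works, so I may assume $u_{i_0}\in E_1$. The main obstacle is precisely here: the index $i_0$ extracted from $E_1$ need not witness membership in the full intersection $E$, so the naive strategy of collecting an index separately from each $E_j$ fails because those indices may float and never agree.

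I would resolve this using the absorption inherent in the primary condition rather than any pigeonhole argument. By the preceding Proposition on $Endo$-primary hyperideals (part~(1)), $\theta(E_1)\subseteq rad(E_1)=P$, so $u_{i_0}\in E_1$ forces $\theta(u_{i_0})\in P$. Now choose any index $i\neq i_0$, possible since $n\ge 2$. The element $k(u_1^{i-1},1_H,u_{i+1}^n)$ has $u_{i_0}$ among its arguments (as $i_0\neq i$), so $\theta\big(k(u_1^{i-1},1_H,u_{i+1}^n)\big)=k\big(\theta(u_1),\ldots,1_H,\ldots,\theta(u_n)\big)$ carries the factor $\theta(u_{i_0})\in P$; as $P$ is a hyperideal it absorbs this product, giving $\theta\big(k(u_1^{i-1},1_H,u_{i+1}^n)\big)\in P=rad(E)$. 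Thus in every case some index satisfies the required disjunction, and $E$ is an $n$-ary $Endo$-primary hyperideal associated with $\theta$. It is worth noting that the floating-index difficulty evaporates because membership of any single $u_{i_0}$ in one of the $E_j$ immediately feeds into $rad(E)$ through this absorption, which is exactly what the equal-radical hypothesis makes usable.
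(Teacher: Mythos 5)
Your proof is correct with respect to the definitions as stated, but it takes a genuinely different route from the paper's. The paper fixes an index $i$ with $u_i\notin\bigcap_{j=1}^{t}E_j$, picks a factor $E_{i_0}$ with $u_i\notin E_{i_0}$, applies the primary condition of $E_{i_0}$ \emph{at that particular index} to get $\theta\big(k(u_1^{i-1},1_H,u_{i+1}^n)\big)\in rad(E_{i_0})=Q$, and finishes with $rad\big(\bigcap_{j}E_j\big)=\bigcap_{j}rad(E_j)=Q$; note that this step silently reads the $Endo$-primary condition as holding at every index (a universal quantifier over $i$), whereas the definition only asserts it for some $i$ --- exactly the ``floating index'' issue you flag. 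Your argument instead uses only $E_1$: you take whatever index $i_0$ the existential in $E_1$'s primary condition delivers, dispose of the case $\theta\big(k(u_1^{i_0-1},1_H,u_{i_0+1}^n)\big)\in rad(E_1)=P$ immediately, and in the case $u_{i_0}\in E_1$ you push $\theta(u_{i_0})$ into $P$ via $\theta(E_1)\subseteq rad(E_1)$ and then absorb it into the hyperideal $P$ inside $\theta\big(k(u_1^{i-1},1_H,u_{i+1}^n)\big)$ for any $i\neq i_0$. This is more careful than the paper on the quantifier over $i$, and your explicit common-power verification of $rad\big(\bigcap_{j}E_j\big)=P$ supplies a step the paper only asserts. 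The trade-off is that your route exposes how weak the existential form of the definition is: your case analysis never invokes the primary property of $E_2,\ldots,E_t$ at all, so it in fact proves the stronger statement that any proper hyperideal $E\subseteq E_1$ with $rad(E)=rad(E_1)$ is $Endo$-primary, whereas the paper's argument, read with the universal-index interpretation of the definition, genuinely uses all the factors.
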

\begin{proof}
Since $rad(E_j)=rad(E_l)$ for all $j,l \in \{1,\ldots,t\}$, we assume that $rad(E_j)=Q$ for all $j \in \{1,\ldots,t\}$. Let $k(u_1^n) \in \cap_{j=1}^tE_j$ for $u_1^n \in H$ such that $u_i \notin \cap_{j=1}^tE_j$ for some $i  \in \{1,\ldots,n\}$. Then there exists $i_0 \in \{1,\ldots,t\}$ such that $u_i \notin E_{i_0}$. Since $ E_{i_0}$ is an $n$-ary $Endo$-primary hyperideal of $H$ associated with $\theta$ and $u_i \notin E_{i_0}$, we conclude that $\theta \big( k(u_1^{i-1},1_H,u_{i+1}^n) \big) \in rad(E_{i_0})=Q$. As $rad(\cap_{j=1}^tE_j)=\cap_{j=1}^t rad(E_j)=Q$, we get $\theta \big( k(u_1^{i-1},1_H,u_{i+1}^n) \big) \in rad(\cap_{j=1}^tE_j)$. Thus, $\cap_{j=1}^tE_i$ is an $n$-ary $Endo$-primary hyperideals of $H$ associated with $\theta$.
\end{proof}
\begin{corollary}
If $E_1,\ldots,E_t$ are $n$-ary  primary hyperideals of $H$  such that $rad(E_j)=rad(E_l)$ for all $j,l \in \{1,\ldots,t\}$, then $\cap_{j=1}^tE_j$ is an $n$-ary  primary hyperideals of $H$.
\end{corollary}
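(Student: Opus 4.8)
The plan is to obtain this corollary as an immediate specialization of the preceding theorem by taking $\theta$ to be the identity endomorphism. The key observation is that an $n$-ary primary hyperideal is precisely an $n$-ary $Endo$-primary hyperideal associated with $\mathrm{id}_H$. First I would verify that the identity map $\mathrm{id}_H : H \longrightarrow H$ qualifies as an endomorphism in the sense of the homomorphism definition: it trivially preserves the $m$-ary hyperoperation $h$ and the $n$-ary operation $k$, and it sends $1_H$ to $1_H$, so all three axioms (i)--(iii) are satisfied.

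Next I would unwind the two definitions side by side. Setting $\theta = \mathrm{id}_H$ in the definition of an $n$-ary $Endo$-primary hyperideal associated with $\theta$, the requirement that $k(u_1^n) \in E$ imply $u_i \in E$ or $\theta\big(k(u_1^{i-1},1_H,u_{i+1}^n)\big) \in rad(E)$ collapses to: $k(u_1^n) \in E$ implies $u_i \in E$ or $k(u_1^{i-1},1_H,u_{i+1}^n) \in rad(E)$ for some $i \in \{1,\ldots,n\}$. This is verbatim the defining condition of an $n$-ary primary hyperideal. Hence, for the identity endomorphism the two notions coincide exactly, and each $E_j$ being $n$-ary primary is the same as each $E_j$ being $n$-ary $Endo$-primary associated with $\mathrm{id}_H$.

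Finally I would apply the previous theorem with $\theta = \mathrm{id}_H$. The hypothesis $rad(E_j)=rad(E_l)$ for all $j,l \in \{1,\ldots,t\}$ is identical to that of the theorem, so the theorem yields that $\cap_{j=1}^t E_j$ is an $n$-ary $Endo$-primary hyperideal associated with $\mathrm{id}_H$. By the coincidence of definitions established above, this is exactly an $n$-ary primary hyperideal of $H$, which is the claim.

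I do not anticipate any genuine obstacle, since the statement is a pure specialization rather than a new argument; the only points requiring care are confirming that the identity map is an admissible endomorphism and checking that, once $\theta$ is removed, the $Endo$-primary condition matches the primary condition term for term.
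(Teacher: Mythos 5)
Your proposal is correct and is essentially the paper's own proof: the paper simply says to take $\theta$ to be the inclusion (i.e.\ identity) homomorphism $H \longrightarrow H$ in the preceding theorem, which is exactly your specialization. You spell out the routine verifications (that $\mathrm{id}_H$ is an admissible endomorphism and that the $Endo$-primary condition collapses to the primary condition) that the paper leaves implicit.
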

\begin{proof}
It is sufficient to consider $\theta$ as an inclusion homomorphism.
\end{proof}
\begin{proposition}
Let $\theta : H \longrightarrow H$ be an   endomorphism and $E$ be an $n$-ary $Endo$-primary hyperideal associated with $\theta$  such that $rad(E)=Q$. Then the following holds:
\begin{itemize} 
\item[\rm{(1)}]~ $(E:u)=H$ for all $u \in E$.
\item[\rm{(2)}]~$(E:u)=E$ for all $u \in H$ such that $\theta(u) \notin Q$.
\end{itemize} 
\end{proposition}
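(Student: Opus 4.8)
The plan is to unwind both statements directly from the definition $(E:u)=\{v\in H\ \vert\ k(u,v,1_H^{(n-2)})\in E\}$, using that $E$ is a hyperideal for the easy inclusions and the $n$-ary $Endo$-primary property of $E$ for the delicate one. Throughout I will rely on the fact that $k$ is a single-valued $n$-ary operation together with $1_H$ being a scalar identity, so that $k(1_H,u,1_H^{(n-2)})=u$ and similar reductions hold, and on the earlier fact $\theta(E)\subseteq rad(E)$.

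For part (1), I would take $u\in E$ and an arbitrary $v\in H$. Since $E$ is a hyperideal, its absorption property $k(v,E,1_H^{(n-2)})\subseteq E$ gives $k(v,u,1_H^{(n-2)})\in E$, and by commutativity $k(u,v,1_H^{(n-2)})\in E$. Hence every $v\in H$ lies in $(E:u)$, so $(E:u)=H$. The same absorption argument, now applied to $v\in E$, shows $E\subseteq(E:u)$ for an arbitrary $u\in H$; this is exactly the trivial inclusion needed in part (2).

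For part (2), fix $u\in H$ with $\theta(u)\notin Q=rad(E)$, so that only $(E:u)\subseteq E$ remains to be proved. Let $v\in(E:u)$, i.e. $k(v,u,1_H^{(n-2)})\in E$. I then apply the $Endo$-primary definition to this membership, singling out the index carrying $v$: the complementary factor collapses to $k(1_H,u,1_H^{(n-2)})=u$, so the definition yields $v\in E$ or $\theta(u)\in rad(E)=Q$. Since $\theta(u)\notin Q$ by hypothesis, the second alternative is excluded and $v\in E$. Combining with $E\subseteq(E:u)$ gives $(E:u)=E$.

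The step needing care is precisely this application of the $Endo$-primary property: one must write $k(v,u,1_H^{(n-2)})\in E$ so that $v$ is the singled-out component and $u$ sits inside the complementary product, after which the scalar identity reduces $k(1_H,u,1_H^{(n-2)})$ to $u$. Choosing a different component — for instance leaving a $1_H$ as the singled-out factor — would make the disjunction vacuous, since $\theta$ of an element of $E$ already lies in $rad(E)$ by the preceding proposition. Thus the whole argument hinges on selecting the decomposition that makes $\theta(u)$, the quantity we have excluded from $Q$, the alternative to $v\in E$.
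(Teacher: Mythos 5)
Your proof is correct and follows essentially the same route as the paper's: the paper dismisses (1) as obvious and, for (2), takes $v\in(E:u)$ with $k(v,u,1_H^{(n-2)})\in E$ and invokes the $Endo$-primary property together with $\theta(u)\notin Q$ to conclude $v\in E$. You merely spell out the absorption argument for (1) and make explicit which index must be singled out when applying the definition in (2) — a point the paper leaves implicit — so there is nothing to correct.
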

\begin{proof}
(1) It is obvious.

(2) Let $\theta(u) \notin Q$ for $u \in H$. The inclusion $E \subseteq (E:u)$ always holds. Assume that $v \in (E:u)$. Then $k(v,u,1_H^{(n-2)}) \in E$. Since $E$ be an $n$-ary $Endo$-primary hyperideal associated with $\theta$ and $\theta(u) \notin Q=rad(E)$, we conclude that $v \in E$ and so $(E:u) \subseteq E$. Thus, $(E:u)=E$.
\end{proof}
We end this section with following theorem.
\begin{theorem}
Let $\theta : H \longrightarrow H$ be an   endomorphism. Then the following holds:
\begin{itemize} 
\item[\rm{(1)}]~Every $\theta$-maximal hyperideal of $H$ is an $n$-ary $Endo$-prime hyperideal associated with $\theta$.
\item[\rm{(2)}]~ If the radical of a hyperideal $E$ of $H$ is a $\theta$-maximal hyperideal, then $E$ is  an  $n$-ary $Endo$-primary hyperideal associated with $\theta$.
\end{itemize}
\end{theorem}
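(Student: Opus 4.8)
The plan is to extract a single structural consequence of $\theta$-maximality and use it for both parts, adding a geometric-series argument for the primary case. First I would record the observation that drives everything: if $M$ is a $\theta$-maximal hyperideal and $u \in H$ satisfies $\theta(u) \notin M$, then the hyperideal $J$ generated by $M$ and $u$ is all of $H$. Indeed $M \subseteq J$ and $u \in J$ force $\theta(u) \in \theta(J)$, so $\theta(J) \nsubseteq M$, and the definition of $\theta$-maximal then leaves only $J = H$. Concretely this gives $1_H \in h\big(p, k(u, v, 1_H^{(n-2)}), 0^{(m-2)}\big)$ for some $p \in M$ and $v \in H$.

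For (1), suppose $k(u_1^n) \in M$. By commutativity it suffices to produce the disjunction at $i = 1$; put $w = k(1_H, u_2^n)$, so that $k(u_1, w, 1_H^{(n-2)}) = k(u_1^n) \in M$. If $\theta(w) \in M$ we are done. Otherwise the observation yields $1_H \in h\big(p, k(w, v, 1_H^{(n-2)}), 0^{(m-2)}\big)$ with $p \in M$, and multiplying through by $u_1$ via the distributivity axiom (c) (together with $k(u_1, 0, 1_H^{(n-2)}) = 0$) places $u_1$ inside $h\big(k(u_1, p, 1_H^{(n-2)}), k(u_1, w, v, 1_H^{(n-3)}), 0^{(m-2)}\big)$. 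Here $k(u_1, p, 1_H^{(n-2)}) \in M$ since $p \in M$, while $k(u_1, w, v, 1_H^{(n-3)}) = k\big(v, k(u_1^n), 1_H^{(n-2)}\big) \in M$ since $k(u_1^n) \in M$; as $(M,h)$ is a subhypergroup, $u_1 \in M$. Thus $u_1 \in M$ or $\theta\big(k(1_H, u_2^n)\big) \in M$, the $Endo$-prime condition.

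For (2), write $Q = rad(E)$; it is $\theta$-maximal, hence $Endo$-prime by part (1), and the observation applies to $Q$. Given $k(u_1^n) \in E$, fix $i = 1$ and $w = k(1_H, u_2^n)$; if $\theta(w) \in Q = rad(E)$ the second alternative of the $Endo$-primary definition holds. Otherwise I obtain $1_H \in h\big(q, z, 0^{(m-2)}\big)$ with $z = k(w, v, 1_H^{(n-2)})$ and $q \in Q$, so $k\big(q^{(s)}, 1_H^{(n-s)}\big) \in E$ for some $s$ (the regime $s = l(n-1)+1$ being identical). The decisive step is a geometric-series identity modulo $E$: reversibility (axiom (a)(3)) gives $z \in h(1_H, \ominus q, 0^{(m-2)})$, and if $t$ denotes the iterated hypersum of the powers $k\big(q^{(j)},1_H^{(n-j)}\big)$ for $0 \le j \le s-1$, the product $k(z, t, 1_H^{(n-2)})$ telescopes to $1_H \ominus k\big(q^{(s)},1_H^{(n-s)}\big)$, so $1_H \in h\big(k(z, t, 1_H^{(n-2)}), e, 0^{(m-2)}\big)$ for some $e \in E$. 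Multiplying by $u_1$ and using $k(u_1, z, 1_H^{(n-2)}) = k(u_1, w, v, 1_H^{(n-3)}) = k\big(v, k(u_1^n), 1_H^{(n-2)}\big) \in E$, every term lands in $E$, whence $u_1 \in E$, giving the first alternative.

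I expect the main obstacle to be precisely the geometric-series step in (2): one must carry the telescoping $k(z, t, 1_H^{(n-2)}) \equiv 1_H \pmod E$ through the $m$-ary hyperaddition rather than ordinary addition, tracking the additive inverses $\ominus q$ and handling both radical regimes $r \le n$ and $r = l(n-1)+1$ when forming the partial sum $t$. Part (1), by contrast, should reduce to pure distributivity bookkeeping, since there the element $p$ already lies in $M$ and no nilpotency intervenes.
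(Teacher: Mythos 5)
Your part (1) is essentially the paper's proof: both extract from $\theta$-maximality that $h\big(M,\langle w\rangle,0^{(m-2)}\big)=H$ whenever $\theta(w)\notin M$, write $1_H\in h\big(p,k(w,v,1_H^{(n-2)}),0^{(m-2)}\big)$, and multiply by $u_i$ using distributivity so that both resulting terms land in $M$ (your reduction to $i=1$ by commutativity is harmless). Part (2), however, is a genuinely different route. The paper never touches nilpotency: it argues that $h\big(E,\langle w\rangle,0^{(m-2)}\big)$ must already be all of $H$, because if it were proper it would be contained in some $n$-ary prime $Q$, which would then contain both $rad(E)$ and $\langle w\rangle$ and hence the improper hyperideal $h\big(rad(E),\langle w\rangle,0^{(m-2)}\big)=H$ — a contradiction; after that, part (2) is word-for-word the computation of part (1) with $u\in E$ in place of $p\in M$. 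Your version instead lifts the unit from $rad(E)+\langle w\rangle$ down to $E+\langle w\rangle$ by exploiting $k\big(q^{(s)},1_H^{(n-s)}\big)\in E$. That works, but the paper's route buys freedom from exactly the obstacle you flag, at the cost of invoking the existence of an $n$-ary prime hyperideal over every proper hyperideal; your route is more self-contained and computational. One caution on your telescoping: in a canonical $m$-ary hypergroup the identity $k(z,t,1_H^{(n-2)})=1_H\ominus k\big(q^{(s)},1_H^{(n-s)}\big)$ cannot be asserted as an equality, since $\ominus$-cancellation is multivalued ($h(q,\ominus q,0^{(m-2)})\ni 0$ but need not equal $\{0\}$). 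The step should be run forward without subtraction: starting from $1_H\in h(q,z,0^{(m-2)})$, repeatedly substitute this into the highest power of $q$, i.e.\ $k\big(q^{(j)},1_H^{(n-j)}\big)\in h\big(k(q^{(j+1)},1_H^{(n-j-1)}),k(q^{(j)},z,1_H^{(n-j-1)}),0^{(m-2)}\big)$, so that after $s$ steps $1_H$ lies in a hypersum of elements of $\langle z\rangle$ together with $k\big(q^{(s)},1_H^{(n-s)}\big)\in E$; since $h\big(E,\langle z\rangle,0^{(m-2)}\big)$ is a hyperideal, this gives $1_H\in h\big(E,\langle z\rangle,0^{(m-2)}\big)$ and your final multiplication by $u_1$ then closes the argument. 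With that repair (and the parallel treatment of the regime $r=l(n-1)+1$), your proof is correct.
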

\begin{proof}
(1) Let $M$ be a $\theta$-maximal hyperideal of $H$ and $k(u_1^n) \in M$ for $u_1^n \in H$ but $\theta \big( k(u_1^{i-1},1_H,u_{i+1}^n) \big) \notin M$. By Lemma 3.4 in \cite{d1}, $ h \big(M, \langle k(u_1^{i-1},1_H,u_{i+1}^n)  \rangle,0^{(m-2)} \big)$ is a hyperideal of $H$. Clearly, $M \subseteq h \big(M, \langle k(u_1^{i-1},1_H,u_{i+1}^n)  \rangle,0^{(m-2)} \big)$. Since $M$ is  a $\theta$-maximal hyperideal of $H$ and $\theta \big( h \big(M, \langle k(u_1^{i-1},1_H,u_{i+1}^n)  \rangle,0^{(m-2)} \big) \big) \nsubseteq  M$, we get $h \big(M, \langle k(u_1^{i-1},1_H,u_{i+1}^n)  \rangle,0^{(m-2)} \big)=H$. Then there exists $m \in M$ and $a \in H$ auch that  $1_H \in h \big(m,k(u_1^{i-1},a,u_{i+1}^n),0^{(m-2)} \big)$ which implies\\ 

$\hspace{1.5cm} k(u_i,1_H^{(n-2)}) \in  k(u_i,h \big(m,k(u_1^{i-1},a ,u_{i+1}^n),0^{(m-2)} \big),1_H^{(n-2)} \big)$

$\hspace{3.4cm}=h \big( k(u_i,m,1^{(n-2)}), k(u_i,k(u_1^{i-1},a ,u_{i+1}^n),1_H^{(n-2)}),0^{(m-2)} \big)$

$\hspace{3.4cm}=h \big( k(u_i,m,1^{(n-2)}), k(a,k(u_1^n),1_H^{(n-2)}),0^{(m-2)} \big)$

$\hspace{3.4cm} \in M.$\\

Hence, $M$ is an $n$-ary $Endo$-prime hyperideal associated with $\theta$.

(2)  Suppose that $E$ is a hyperideal of $H$ such that $rad(E)$ is a $\theta$-maximal hyperideal. Let $k(u_1^n) \in E$ for $u_1^n \in H$ such that  $\theta  \big( k(u_1^{i-1},1_H,u_{i+1}^n) \big) \notin rad(E)$. Then, we conclude that  $h \big( rad(E),\langle k(u_1^{i-1},1_H,u_{i+1}^n) \rangle, 0^{(m-2)} \big)=H$ since  $rad(E)$ is a $\theta$-maximal hyperideal, $rad(E) \subseteq h \big( rad(E),\langle k(u_1^{i-1},1_H,u_{i+1}^n) \rangle, 0^{(m-2)} \big)$   and $\theta \big( h \big( rad(E),\langle k(u_1^{i-1},1_H,u_{i+1}^n) \rangle, 0^{(m-2)} \big)  \big) \nsubseteq rad(E)$. Now, let us to assume that  $h \big( E,\langle k(u_1^{i-1},1_H,u_{i+1}^n) \rangle, 0^{(m-2)} \big) \neq H$. So,  $E \subseteq h \big( E,\langle k(u_1^{i-1},1_H,u_{i+1}^n) \rangle, 0^{(m-2)} \big) \subseteq Q$ for some $n$-ary prime hyperideal $Q$ of $H$.  This implies that  $rad(E) \subseteq Q$ and $\langle k(u_1^{i-1},1_H,u_{i+1}^n) \rangle \subseteq h \big( E,\langle k(u_1^{i-1},1_H,u_{i+1}^n) \rangle, 0^{(m-2)} \big) \subseteq Q$. Therefore, we obtain $H= h \big( rad(E),\langle k(u_1^{i-1},1_H,u_{i+1}^n) \rangle, 0^{(m-2)} \big) \subseteq Q$, a contradiction. Hence, we get $h \big( E,\langle k(u_1^{i-1},1_H,u_{i+1}^n) \rangle, 0^{(m-2)} \big) = H$ and so $1_H \in h \big(u,k(u_1^{i-1},a,u_{i+1}^n),0^{(m-2)} \big)$ for some $u \in E$ and $a \in H$. Then, \\

$\hspace{1.5cm} k(u_i,1_H^{(n-2)}) \in  k(u_i,h \big(u,k(u_1^{i-1},a ,u_{i+1}^n),0^{(m-2)} \big),1_H^{(n-2)} \big)$

$\hspace{3.4cm}=h \big( k(u_i,u,1^{(n-2)}), k(u_i,k(u_1^{i-1},a ,u_{i+1}^n),1_H^{(n-2)}),0^{(m-2)} \big)$

$\hspace{3.4cm}=h \big( k(u_i,u,1^{(n-2)}), k(a,k(u_1^n),1_H^{(n-2)}),0^{(m-2)} \big)$

$\hspace{3.4cm} \in E.$\\

Consequently, $E$ is  an  $n$-ary $Endo$-primary hyperideal associated with $\theta$.
\end{proof}
\section{conclusion}
The current paper has  introduced  the notion of $n$-ary $Endo$-prime hyperideals using an endomorphism $\theta$ and investigated their properties. Although the class of  $n$-ary $Endo$-prime hyperideals is a generalization of $n$-ary prime hyperideals, we gave an example showed an $n$-ary $Endo$-prime hyperideal may not be an $n$-ary prime hyperideal.  
We concluded that a hyperideal  $E$ with $\theta(E) \subseteq E$ may not be an $n$-ary $Endo$-prime hyperideal, however, the inclusion  $\theta(E) \subseteq E$ always hold for every $n$-ary $Endo$-prime hyperideal $E$.
We presented several characterizations of this new concept under certain conditions. The findings discussed in this paper enhance the comprehension of the structure of Krasner $(m,n)$-hyperrings with  respect to an endomorphism $\theta$. We examined many behaviors of $Endo$-prime hyperideals in specific cases.  Furthermore, we presented the concept of $n$-ary $Endo$-primary  hyperideals as an expansion of $n$-ary $Endo$-prime hyperideals. We showed that an $n$-ary $Endo$-primary hyperideals may not be an $n$-ary $Endo$-prime hyperideal. We ontained that the radical of an $n$-ary $Endo$-primary hyperideal is $n$-ary $Endo$-prime hyperideal. As a new research subject, we suggest the notion of  $Endo$-prime subhypemodules. 


\end{document}